\theoremstyle{definition}
\newtheorem{definition}{Definition}[section]
\newtheorem{example}[definition]{Example}
\newtheorem{remark}[definition]{Remark}
\newtheorem{notation}[definition]{Notation}
\theoremstyle{plain}
\newtheorem{proposition}[definition]{Proposition}
\newtheorem{theorem}[definition]{Theorem}
\newtheorem*{thm*}{Theorem}
\newtheorem{lemma}[definition]{Lemma}
\newtheorem{corollary}[definition]{Corollary}
\newtheorem*{cor*}{Corollary}
\newtheorem*{con*}{Conjecture}
\newtheorem*{verm*}{Vermutung}
\newcommand{\Int}{\operatorname{int}}
\newcommand{\rank}{\operatorname{rank}} 
\newcommand{\T}{\operatorname{T}}
\newcommand{\C}{{\mathbb C}}
\newcommand{\R}{{\mathbb R}}
\newcommand{\Z}{{\mathbb Z}}
\newcommand{\K}{{\mathbb K}}
\renewcommand{\T}{{\mathbb T}}
\title[Hyperbolic plane curves near the non-singular tropical limit]{Hyperbolic plane curves near the non-singular tropical limit}
\DeclareMathOperator{\Conv}{Conv}
\DeclareMathOperator{\Area}{Area}
\newcommand{\pr}{\mathbb{P}}
\newcommand{\E}{\mathcal{E}}
\renewcommand{\c}{\mathbf{c}}
\renewcommand{\qedsymbol}{\(\blacksquare\)}
\DeclareMathOperator{\Edge}{Edge}
\DeclareMathOperator{\Adm}{Adm}
\DeclareMathOperator{\Div}{Div}
\DeclareMathOperator{\Hyp}{Hyp}
\DeclareMathOperator{\Trop}{Trop}
\DeclareMathOperator{\val}{val}
\author{C\'edric Le Texier}
\address{Universitetet i Oslo, Norway}
\email{cedricle@math.uio.no}
\begin{document}
	
 \subjclass[2010]{Primary: 14T05, 14P25}

\begin{abstract}
We determine necessary and sufficient conditions for real algebraic curves near the non-singular tropical limit to be hyperbolic with respect to a point, thus generalising Speyer's classification of stable curves near the tropical limit.
In order to obtain the conditions, we develop tools of real tropical intersection theory.
We introduce the tropical hyperbolicity locus and the signed tropical hyperbolicity locus of a real algebraic curve near the non-singular tropical limit. 
In the case of honeycombs, we characterise the tropical hyperbolicity locus in terms of the set of twisted edges on the tropical limit.
\end{abstract}

\maketitle

\tableofcontents

\section{Introduction}

The aim of this article is to study hyperbolic plane curves near the non-singular tropical limit by the means of real intersection theory of tropical curves.

\begin{definition}
Let $\mathcal{C} \subset \pr^2$ be a real algebraic curve of degree $d$.
Denote by $\mathcal{C} (\R )$ the set of real points of the curve $\mathcal{C}$, and by $\pr^2 (\R)$ the set of real points of the projective plane $\pr^2$.
We say that $\mathcal{C}$ is \emph{hyperbolic with respect to} a real point $p \in \pr^2 (\R) \backslash \mathcal{C} (\R)$ if for every real line $\mathcal{L} \subset \pr^2$ containing $p$, the intersection $\mathcal{L} \cap \mathcal{C}$ consists of $d$ real points, counted with multiplicity.
We say that $\mathcal{C}$ is \emph{hyperbolic} if there exists a real point $p\in \pr^2 (\R) \backslash \mathcal{C} (\R )$ such that $\mathcal{C}$ is hyperbolic with respect to $p$.
The \emph{hyperbolicity locus} of $\mathcal{C}$ is the subset of $\pr^2 (\R) \backslash \mathcal{C} (\R)$ of real points $p$ such that $\mathcal{C}$ is hyperbolic with respect to $p$.
We say that $\mathcal{C}$ is \emph{stable} if the hyperbolicity locus of $\mathcal{C}$ contains the whole positive orthant $(\R_{>0})^2$ of the algebraic torus $(\R^\times )^2 \subset \pr^2 (\R)$.
\end{definition}

Hyperbolic varieties are studied in several domains such as convex optimization (\cite{guler1997hyperbolic}, \cite{renegar2004hyperbolic}),
topology of real algebraic varieties (\cite{rokhlin1978complex}, \cite{helton2007linear}) and determinantal representations (\cite{lewis2005lax}, \cite{helton2007linear}).
Stable polynomials, which gave the name to the stable hypersurfaces they define, have a particularly interesting connection to matroid theory (\cite{choe2004homogeneous}, \cite{wagner2009criterion},\cite{branden2011obstructions}).
Speyer \cite{speyer2005horn} and Br{\"a}nd{\'e}n \cite{branden2010discrete} studied the combinatorial properties of the tropicalisation of stable polynomials, and Rinc{\'on}, Vinzant, Yu \cite{rincon2021positively} studied the tropicalisation of positively hyperbolic varieties, an analogue of stable varieties in higher codimension, and their relation with positroids.

The relative topology of non-singular hyperbolic curves in the projective plane is well-known.
Helton and Vinnikov \cite[Theorem 5.2]{helton2007linear} showed that the real part of a non-singular hyperbolic curve $\mathcal{C}$ of degree $d$ in $\pr^2$ consists of $\left\lfloor \frac{d}{2} \right\rfloor$ nested \emph{ovals}, which are connected components of $\mathcal{C} (\R )$ homeomorphic to $S^1$ and disconnecting $\pr^2 (\R )$, plus a \emph{pseudo-line} (which are connected components of $\mathcal{C} (\R )$ homeomorphic to $S^1$ and not disconnecting $\pr^2 (\R )$) if $d$ is odd.
Rokhlin \cite{rokhlin1978complex} proved that a non-singular real algebraic curve $\mathcal{C}$ of degree $d$ in $\pr^2$ with $\left\lfloor \frac{d}{2} \right\rfloor$ nested ovals is \emph{dividing}, meaning that the real part of $\mathcal{C}$ disconnects the complex part of $\mathcal{C}$.
Moreover, Orevkov \cite[Proposition 1.1]{orevkov2007arrangements} showed that a non-singular real curve $\mathcal{C}$ of degree $d$ is hyperbolic if $\mathcal{C}$ is dividing and $\mathcal{C}(\R)$ has exactly $\left\lfloor \frac{d}{2} \right\rfloor$ ovals (see \Cref{PropHypRok}).
We obtain then an equivalence between hyperbolicity and being dividing with $\left\lceil \frac{d}{2} \right\rceil$ connected components in the real part (see \Cref{CorHypRok})  

This last fact can be translated into a combinatorial condition for hyperbolic curves in $\pr^2$ \emph{near the non-singular tropical limit} (see \Cref{DefTropLimit} and \Cref{NotationRealPhase}).
%
Haas \cite[Section 5.4]{haas1997real} gave a characterisation of dividing curves obtained via combinatorial patchworking (see \Cref{ThDividing}), which can be formulated in terms of \emph{twisted edges} on a non-singular tropical curve $C$ as follows (see \Cref{DefTwist} and \Cref{amoebaedge2}).
A curve $\mathcal{C}$ near the non-singular tropical limit is dividing if and only if every cycle of $C$ has an even number of twisted edges, for $C$ the non-singular tropical curve given as support of the tropical limit.
Thus we call a set of twisted edges $T$ on a non-singular tropical curve $C$ \emph{dividing} if every cycle of $C$ contains an even number of element of $T$.
Renaudineau and Shaw \cite[Theorem 7.2]{renaudineau2018bounding} introduced a way to compute the number of real components of a real algebraic curve $\mathcal{C}$ near the non-singular tropical limit with support a non-singular tropical curve $C$ with set of twisted edges $T$, using a homomorphism $\partial_T$ of tropical homology groups determined by $T$ (see \Cref{ThConnectHom}).
Thanks to those results, we can first characterise hyperbolic plane curves near the non-singular tropical limit in terms of twisted edges as follows.

\begin{proposition}[\Cref{PropHypTwist}]
\label{IntroPropHypTwist}
Let $\mathcal{C}$ be a real algebraic curve of degree $d$ in $\pr^2$ near the non-singular tropical limit with support a non-singular tropical curve $C$ of degree $d$ in the tropical projective plane $\T \pr^2$.
Let $T$ be the set of twisted edges on $C$ induced by $\mathcal{C}$.
The curve $\mathcal{C}$ is hyperbolic if and only if $T$ is dividing and the kernel of the homomorphism $\partial_T$ determined by $T$ has dimension $\left\lceil \frac{d}{2} \right\rceil - 1$.
\end{proposition}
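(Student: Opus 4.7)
The plan is to read the equivalence as the conjunction of two independent translations of the classical topological criterion for hyperbolicity, each handled by one of the three preparatory results already assembled in the introduction.

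First I would replace hyperbolicity by a topological condition on the pair $(\mathcal{C}(\R), \pr^2(\R))$. By Corollary \ref{CorHypRok}, which combines the Helton--Vinnikov nested ovals theorem, Rokhlin's dividing criterion and Orevkov's Proposition 1.1, a non-singular real plane curve $\mathcal{C}$ of degree $d$ is hyperbolic if and only if $\mathcal{C}$ is dividing and $\mathcal{C}(\R)$ has exactly $\left\lceil \frac{d}{2} \right\rceil$ connected components (that is, $\left\lfloor \frac{d}{2} \right\rfloor$ nested ovals plus a pseudo-line when $d$ is odd). It is then enough to translate each of these two clauses into a condition on the combinatorial data $(C,T)$.

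For the dividing clause I would invoke Haas's characterisation (Theorem \ref{ThDividing}): a real algebraic curve near the non-singular tropical limit with tropical support $C$ and induced set of twisted edges $T$ is dividing if and only if every cycle of $C$ contains an even number of edges of $T$, i.e.\ if and only if $T$ is dividing in the sense introduced just before the statement. For the component count I would apply the Renaudineau--Shaw formula (Theorem \ref{ThConnectHom}), which expresses $b_0(\mathcal{C}(\R))$ as $1 + \dim \ker \partial_T$. Equating this with $\left\lceil \frac{d}{2} \right\rceil$ gives precisely $\dim \ker \partial_T = \left\lceil \frac{d}{2} \right\rceil - 1$, and conjoining the two equivalences delivers the proposition in both directions.

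The logical skeleton is thus a three-step syllogism, so I do not expect a substantive technical obstacle inside the proof itself. The point that requires the most care is a bookkeeping one: one must verify that the set $T$ of twisted edges entering Haas's theorem and the set $T$ indexing Renaudineau--Shaw's homomorphism $\partial_T$ are one and the same object, namely the set induced by the real phase of the curve $\mathcal{C}$ near the non-singular tropical limit, and that the non-singularity and genericity hypotheses required by all three cited theorems hold simultaneously for $\mathcal{C}$. Once these conventions are aligned, the proof reduces to stringing the three equivalences together.
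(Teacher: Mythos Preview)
Your proposal is correct and matches the paper's approach exactly: the paper simply states that the proposition follows by applying \Cref{CorHypRok}, \Cref{ThDividing} and \Cref{ThConnectHom}, which is precisely the three-step syllogism you outline. Your only addition is the explicit bookkeeping remark about aligning the hypotheses and the identification of $\partial_T$ with the matrix $A_T$ of \Cref{ThConnectHom}, which the paper leaves implicit.
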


For $\mathcal{C}$ a hyperbolic curve in $\pr^2$ near the non-singular tropical limit with support a tropical curve $C$, we define its \emph{signed tropical hyperbolicity locus} $\R H$ as the interior of the innermost connected component of the real part of $C$ in the real projective plane, and its \emph{tropical hyperbolicity locus} $H$ as a subset of the complement $\T \pr^2 \backslash C$ so that each connected component of $H$ has a symmetric copy in $\R H$ (\Cref{DefTropHypLocus}).
The closure of a tropical hyperbolicity locus is a tropical spectrahedron in the sense of \cite{allamigeon2020tropical}, ie.~ the tropicalisation of a spectrahedron, and the signed tropical hyperbolicity locus $\R H$, as an open set, satisfies the conditions to be signed tropically convex \cite{loho2019signed}.

The description of hyperbolicity in \Cref{IntroPropHypTwist} is not geometric.
In fact, it involves the computation of the kernel of a symmetric matrix modulo 2 (see \Cref{ThConnectHom}, which is a reformulation of \cite[Theorem 7.2]{renaudineau2018bounding}), hence it does not give any information on the tropical hyperbolicity locus. 
This motivates our study of the intersection theory of real algebraic curves near the non-singular tropical limit. 

Brugallé and L{\'o}pez de Medrano \cite[Proposition 3.11]{brugalle2012inflection} gave an upper bound on the number of intersection points of two algebraic curves tropicalising to two distinct tropical curves, based on the intersection locus of these tropical curves.
This bound is sharp whenever the intersection locus is compact.
Starting from this result (recalled in \Cref{PropCpxMult}), we give an analogous statement in the real case (\Cref{PropRealMult}).
Next in \Cref{CorTransMult2}, we compute the number of real and complex conjugated intersection points when the real algebraic curves tropicalise to non-singular tropical curves intersecting in a \emph{transverse} point (\Cref{DefTransverse}),
in terms of \emph{real phase structure} on the edges of the                                                                                                                                                                                                                                                                                                                                                                                                                                                                                                                                                                                                                                                                                                                                                                                                                                non-singular tropical curves (see \Cref{DefRealPhase}, \Cref{RealPhase} and \Cref{RkRealPhase}).

A real phase structure $\E$ on a non-singular tropical curve curve $C$ is a collection of $\Z_2$-affine spaces $(\E_e)_e$, for $\Z_2 := (\Z / 2\Z )$ and $e$ running through the edges of $C$, such that $\E_e$ is a 1-dimensional affine subspace of $\Z_2^2$ parallel to the $\Z_2$-vector space spanned by the direction modulo 2 of the edge $e$, and satisfying an additional condition around each 3-valent vertex of $C$.
A real phase structure $\varepsilon$ on a tropical point $v$ is an element of $\Z^2_2$.
Thus in the following, we will specify for real algebraic curves near the non-singular tropical limit the pair $(C,\E)$ given as tropical limit (see \Cref{NotationRealPhase}), and we will specify for real points near the tropical limit the pair $(v,\varepsilon)$ given as tropical limit. 
Moreover, the support of a tropical limit is the first element of the pair, ie.~ either a non-singular tropical curve or a tropical point depending on context. 

In order to study the hyperbolicity locus, we also need to compute the number of real intersection points of two tropical curves with real phase structures in two non-transverse cases.
In the following, we consider that any edge of a tropical curve contains its adjacent vertices.
Said otherwise, the edges are considered to be closed in the Euclidean topology on $\R^2$.
In the case where the connected component $E$ of the intersection is an edge of a non-singular tropical curve $C$, strictly contained in an edge of a non-singular tropical curve $C'$, we obtain the following result.

\begin{theorem}[\Cref{Th4.3.9}]
\label{IntroTh4.3.9}
Let $C , C'$ be two non-singular tropical curves in $\R^2$ such that there exists a closed bounded edge $e$ of $C$ contained in the interior of an edge $e'$ of $C'$ (see \Cref{nontransverse2}).
Let $\E , \E'$ be real phase structures on $C , C'$, inducing sets of twisted edges on $C$ and $C'$. 
\begin{enumerate}
\item \label{IntroItem4.3.9.1} If $\E_e \neq \E_{e '} '$, or if $\E_e = \E_{e '} '$ and $e$ is twisted, then for any two real algebraic curves $\mathcal{C} , \mathcal{C}' \subset (\C^\times )^2$ near the non-singular tropical limits $(C,\E )$ and $(C' , \E ')$, the intersection $\mathcal{C}\cap \mathcal{C}'$ has exactly two distinct real points near the tropical limit with support in $e$.
\item \label{IntroItem4.3.9.2} If $\E_e = \E_{e '} '$ and $e$ is non-twisted, then for $\mathcal{C}, \mathcal{C'} \subset (\C^\times )^2$ two non-singular real algebraic curves near the tropical limits $(C,\E )$ and $(C' , \E ')$, the intersection $\mathcal{C}\cap \mathcal{C}'$ has exactly either two distinct real points, two distinct complex conjugated points, or a multiplicity 2 real point, near the tropical limit with support in $e$.
Moreover, there exist infinitely many curves satisfying the two first intersection types, and there exist exactly two pairs of curves satisfying the third intersection type.
\end{enumerate}
\end{theorem}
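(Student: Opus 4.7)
The plan is to localise the problem to a small neighbourhood of $e$ and, after a unimodular change of coordinates sending the primitive direction $\mathbf{v}_0$ of $e$ to a coordinate axis, reduce both items to a univariate real polynomial whose leading signs are controlled by the real phase structures $\E, \E'$ and the twist data of $e$. First I would invoke \Cref{PropCpxMult} to see that the total number of complex intersection points of $\mathcal{C}\cap\mathcal{C}'$ with support in $e$ is $2$: by balancing and non-singularity of $C$ at the two endpoints $v_1, v_2$ of $e$, each endpoint contributes $|\det(\mathbf{u}_i, \mathbf{v}_0)|=1$, where $\mathbf{u}_i$ is an outgoing primitive direction of $C$ at $v_i$ different from $\pm\mathbf{v}_0$. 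The task then reduces to deciding which of these two points are real.

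In the chosen coordinates the defining polynomials of $\mathcal{C}, \mathcal{C}'$ may be written as $f=f_0+(\text{lower valuation})$ and similarly for $g$, where $f_0, g_0$ are supported on the respective dual Newton edges of $e$ and $e'$. After eliminating one variable, the common zero locus of $f$ and $g$ near $e$ is described by a real univariate polynomial $R(z)$ of degree $2$ whose two leading coefficients are explicit $\pm 1$-valued quantities determined by $\E, \E'$ and by whether $e$ is twisted; this is where \Cref{NotationRealPhase} and the sign-flip rule of \Cref{DefTwist} enter. In both subcases of \ref{IntroItem4.3.9.1} (either $\E_e \neq \E'_{e'}$, or $\E_e = \E'_{e'}$ with $e$ twisted) a bookkeeping check shows that the two leading coefficients of $R$ have opposite sign; Descartes' rule of signs together with the complex count of $2$ then forces exactly two distinct real roots, hence two distinct real intersection points with support in $e$, proving \ref{IntroItem4.3.9.1}.

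For \ref{IntroItem4.3.9.2} the same analysis shows that $\E_e = \E'_{e'}$ with $e$ non-twisted makes the two leading coefficients of $R$ have equal sign. The discriminant $\Delta$ of $R$ then depends genuinely on the subleading parameters of $\mathcal{C}, \mathcal{C}'$, and the three sub-cases $\Delta>0$, $\Delta<0$, $\Delta=0$ correspond respectively to two distinct real points, a complex-conjugated pair, and a real double point, as claimed. The first two conditions are open in the infinite-dimensional space of perturbations compatible with the tropical limits $(C,\E), (C',\E')$, so each is realised by infinitely many pairs $(\mathcal{C}, \mathcal{C}')$.

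The main obstacle, which I expect to require the most care, is the final claim that the tangency case $\Delta = 0$ is realised by \emph{exactly} two pairs of curves. I would work in the normal form above and write $\Delta$ explicitly as a polynomial in the subleading coefficients; my expectation is that, after fixing the tropical data, $\Delta = 0$ cuts out a discrete set of solutions, with the two claimed pairs corresponding to the two endpoints $v_1, v_2$ of $e$ at which the tangency may concentrate. Verifying that both tangency configurations are genuinely attained by real perturbations near $(C,\E),(C',\E')$ — rather than excluded by sign constraints or convergence obstructions on the Puiseux series data — will be the delicate step.
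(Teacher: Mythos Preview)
Your overall strategy---localise, change coordinates, eliminate one variable, reduce to a real quadratic in one variable, and analyse its discriminant---is exactly the paper's approach, and your treatment of the twisted subcase of \ref{IntroItem4.3.9.1} and of the trichotomy in \ref{IntroItem4.3.9.2} is essentially correct.

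There is, however, a genuine gap in your handling of the other subcase of \ref{IntroItem4.3.9.1}, namely $\E_e \neq \E'_{e'}$. You claim that in both subcases the leading and constant coefficients of the quadratic $R(z)$ have opposite sign, so that the discriminant is automatically positive. This is true in the twisted subcase (where \Cref{PropTwistSign} controls precisely the product of the $z^2$-coefficient and the constant term), but it is \emph{false} in general when $\E_e \neq \E'_{e'}$. In the paper's normalisation the $z^2$-coefficient and the constant term involve the signs $\delta_{m,n},\delta_{r,s}$ at the two ``far'' vertices of the triangles dual to the endpoints of $e$, whereas the condition $\E_e \neq \E'_{e'}$ constrains only the signs $\delta_{p+1,q},\delta_{p,q+1}$ along $e^\vee$ against those of $C'$. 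These are independent data, so the product of the extreme coefficients can have either sign. The correct argument for $\E_e \neq \E'_{e'}$ is a \emph{valuation} argument on the middle coefficient: the two summands making up the $z^1$-coefficient have the same leading sign precisely when $\E_e \neq \E'_{e'}$, so there is no cancellation, the middle coefficient has valuation $0$, and its square dominates the product of the extreme coefficients (which carries a strictly positive power of $t$). Descartes' rule on leading signs alone cannot see this; you need the non-archimedean input.

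Your guess for the ``exactly two pairs'' in the tangency case is also off: the two solutions do not correspond to the two endpoints of $e$. In the paper's computation the discriminant reads $D = B^2 - 4At^b$ with $A>0$ in the non-twisted case, and $D=0$ forces $B = \pm 2\sqrt{A}\,t^{b/2}$; the two pairs come from this $\pm$, i.e.\ from $D$ being quadratic in the middle coefficient $B$, not from a geometric degeneration to either vertex.
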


Furthermore, in the case where the real phase structures are distinct on the intersection, we obtain in \Cref{CorDistinctPhaseTwist} that the two distinct real intersection points are near the tropical limit with support the two vertices of the edge $e$.
We deduce from \Cref{IntroTh4.3.9} a necessary condition for the intersection edge $e$ to lift to a multiplicity 2 real point, see \Cref{CorNonTwistTang}.

Cueto, Len and Markwig have extensively studied the lifts of tropical bitangents, notably in the case of tropical quartic curves \cite{len2020lifting},\cite{cueto2020combinatorics}.
Using \Cref{CorNonTwistTang}, we study in \Cref{ExBitangent} the existence of lifts to totally real bitangents of the tropical quartic curve analysed in \cite[Example 1.3]{cueto2020combinatorics} in terms of real phase structure and twisted edges.

In a similar way as in \Cref{IntroTh4.3.9}, we treat the case of two non-singular tropical curves $C,C'$ intersecting in a segment $E$ strictly contained in an edge $e$ of $C$ and strictly contained in an edge $e'$ of $C'$ (see the underlying local model in \Cref{FigDefRel}).
We introduce for this case the notion of \emph{relatively twisted} connected component of the intersection (see \Cref{DefRelTwist} and \Cref{FigRelTwist}).
Notice that the roles of (relatively) twisted and non-twisted are switched compared to \Cref{IntroTh4.3.9}.

\begin{theorem}[\Cref{Th4.3.10}]
\label{IntroTh4.3.10}
Let $C, C'$ be two non-singular tropical curves in $\R^2$ such that there exists a segment $E \subset C\cap C'$, non-reduced to a point, strictly contained in both a closed edge $e$ of $C$ and a closed edge $e'$ of $C'$.
Let $\E , \E'$ be real phase structures on $C , C'$, inducing sets of twisted edges on $C$ and $C'$. 
\begin{enumerate}
\item \label{IntroItem4.3.10.1} If $\E_e \neq \E_{e '} '$, or if $\E_e = \E_{e '} '$ and $E$ is relatively non-twisted, then for any two real algebraic curves $\mathcal{C} , \mathcal{C}' \subset (\C^\times )^2$ near the non-singular tropical limits $(C,\E )$ and $(C' , \E ')$, the intersection $\mathcal{C}\cap \mathcal{C}'$ has exactly two distinct real points near the tropical limit with support in $E$.
\item \label{IntroItem4.3.10.2} If $\E_e = \E_{e '} '$ and $E$ is relatively twisted, then for $\mathcal{C}, \mathcal{C'} \subset (\C^\times )^2$ two real algebraic curves near the non-singular tropical limits $(C,\E )$ and $(C' , \E ')$, the intersection $\mathcal{C}\cap \mathcal{C}'$ has exactly either two distinct real points, two distinct complex conjugated points, or a multiplicity 2 real point, near the tropical limit with support in $E$.
Moreover, there exist infinitely many curves satisfying the two first intersection types, and there exist exactly two pairs of curves satisfying the third intersection type.
\end{enumerate}
\end{theorem}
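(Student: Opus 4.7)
The plan is to reduce to a local analysis near the segment $E$, mirroring the proof of \Cref{IntroTh4.3.9}. Since $E \subsetneq e$ and $E \subsetneq e'$, each endpoint of $E$ is a vertex of exactly one of the two curves --- one is a vertex of $C$ sitting in the relative interior of $e'$, and the other is a vertex of $C'$ sitting in the relative interior of $e$, since otherwise $E$ could be prolonged on that side. Near each such endpoint the tropical picture is precisely the local model of a trivalent vertex of one curve meeting the interior of an edge of the other, which is the same local situation as in the proof of \Cref{IntroTh4.3.9}.

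Passing to an adapted toric chart in which $e$ and $e'$ become horizontal, the leading terms of the defining polynomials of $\mathcal{C}$ and $\mathcal{C}'$ reduce, near an endpoint $v_i$ of $E$, to a trinomial from one curve and a binomial from the other, whose reality and multiplicity are controlled by the real phase structures on the edges adjacent to $v_i$. Using the same sign-pattern dictionary as in the proof of \Cref{IntroTh4.3.9}, one verifies that the hypothesis $\E_e \neq \E_{e'}'$, as well as the hypothesis $\E_e = \E_{e'}'$ together with $E$ relatively non-twisted, both translate into the sign configuration producing two distinct transverse real solutions --- one near each $v_i$, and no other intersection points along $E$. This proves the first case of the theorem.

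In the second case, the binomial approximations of $\mathcal{C}$ and $\mathcal{C}'$ coincide along $E$, so the intersection number on $E$ is resolved only at the next order in the tropical parameter. A Newton polytope rescaling of the system near $E$ reduces the problem to a single quadratic equation, whose discriminant $\Delta$ is a nontrivial function of the next-to-leading coefficients carried by the two endpoint branchings, and whose sign determines the three possible intersection types. The conditions $\Delta > 0$ and $\Delta < 0$ each cut out a Zariski-open subset of the space of admissible deformations and are therefore realised by infinitely many pairs of curves, while the vanishing $\Delta = 0$, together with the leading-order matching imposed along $E$, leaves exactly two pairs $(\mathcal{C}, \mathcal{C}')$ producing the real multiplicity-two intersection. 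The main obstacle I anticipate is matching the combinatorial definition of ``$E$ relatively twisted'' with the sign pattern of the trinomial-binomial perturbations at both $v_1$ and $v_2$ simultaneously; once this dictionary is set up, the remainder of the argument is a direct adaptation of the calculations used for \Cref{IntroTh4.3.9}.
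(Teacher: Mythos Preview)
Your decomposition into two endpoint-local analyses is only valid for the sub-case $\E_e \neq \E_{e'}'$. There the leading binomials of $\mathcal{C}$ and $\mathcal{C}'$ along $E$ disagree, the intersection points indeed tropicalise to the two endpoints $v_1, v_2$, and your local trinomial/binomial picture recovers exactly what the paper records as \Cref{CorRelDistRealPhase}.

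The gap is in the second sub-case of \Cref{IntroItem4.3.10.1}, namely $\E_e = \E_{e'}'$ with $E$ relatively non-twisted. Here the leading binomials of the two curves \emph{coincide} along $E$, so each endpoint-local system degenerates and gives no information on its own; moreover the two intersection points need not tropicalise near $v_1$ and $v_2$ at all --- in the paper's computation they sit at $(c,c)$ and $(b-c,b-c)$ for $c = \val\bigl(\delta_{p+1,q} - \delta_{p,q+1}\varepsilon_{k+1,l}/\varepsilon_{k,l+1}\bigr)$, and for $c \geq b/2$ both collapse to the midpoint of $E$. Thus your dichotomy ``binomial approximations coincide / do not coincide'' separates $\E_e = \E_{e'}'$ from $\E_e \neq \E_{e'}'$, not \Cref{IntroItem4.3.10.1} from \Cref{IntroItem4.3.10.2}.

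What the paper actually does for both sub-cases with $\E_e = \E_{e'}'$ is a single global substitution: after normalising $E$ to the direction $(1,1)$ with endpoints $(0,0)$ and $(b,b)$, the trinomial of $\mathcal{C}$ (carrying the vertex of $C$) is solved for $w$ and plugged into the trinomial of $\mathcal{C}'$ (carrying the vertex of $C'$). The resulting equation, after peeling off the transverse roots outside $E$, is quadratic in a rescaled variable, with discriminant
\[
D = \gamma^2 + 4\,\frac{\delta_{m,n}\,\delta_{p,q+1}\,\varepsilon_{k,l}}{\varepsilon_{k,l+1}}\left(-\frac{\varepsilon_{k+1,l}}{\varepsilon_{k,l+1}}\right)^{n-q}.
\]
The sign of the second summand is read off via \Cref{PropSignRelTwist}, and this is precisely where ``relatively non-twisted'' versus ``relatively twisted'' enters: in the non-twisted case that summand is positive, forcing $D>0$ regardless of $\gamma$; in the twisted case it is negative and all three outcomes occur. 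Your sketch for \Cref{IntroItem4.3.10.2} is essentially this, but the same global discriminant analysis is already needed for the relatively non-twisted half of \Cref{IntroItem4.3.10.1}, not merely an endpoint argument.
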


Again, in the case where the real phase structures are distinct on the edges containing the intersection, we obtain in \Cref{CorRelDistRealPhase} that the two distinct real intersection points are near the tropical limit with support the two vertices of the segment $E$.
We also deduce from \Cref{IntroTh4.3.10} a necessary condition for the intersection edge $e$ to lift to a multiplicity 2 real point, see \Cref{CorNonRelTwistTang}.

Thanks to these intersection results, we compute the necessary and sufficient conditions for every real algebraic curve near a common non-singular tropical limit  to be hyperbolic with respect to real points near a common tropical limit.
In the following, we use a decomposition $\Sigma_v$ of the tropical projective plane $\T \pr^2$ with respect to a point $v \in \T \pr^2$, see \Cref{DefFanPt} and \Cref{SubdivisionPoint}.
The idea is to subdivide $\T \pr^2$ by three half-lines $\tau_\eta$ with origin $v$ and outward primitive integer direction $\eta\in \{ (1,0) , (0,1), (-1,-1) \} \subset \Z^2$, such that $\T \pr^2 \backslash \bigcup \tau_\eta$ has three connected components $\sigma_\eta$ with $\tau_\eta$ not contained in the boundary of $\sigma_\eta$.
We will say that a point $v$ in $\T \pr^2 \backslash C$ is \emph{generic with respect to} $C$ if the rays of $\Sigma_v$ intersect $C$ transversely in the interior of its edges.

\begin{theorem}[\Cref{Th4.4.4}]
\label{IntroTh4.4.4}
Let $C$ be a non-singular tropical curve of degree $d$ in the tropical projective plane $\T \pr^2$, and let $\E$ be a real phase structure on $C$. 
Let $v'$ be a tropical point in $\T \pr^2 \backslash C$, and let $\varepsilon \in \Z_2^2$.
Every real algebraic curve $\mathcal{C}$ of degree $d$ in $\pr^2$ and near the non-singular tropical limit $(C,\E )$ is hyperbolic with respect to a point $p$ near the tropical limit $(v',\varepsilon)$, if and only if for $\Sigma_v$ the polyhedral subdivision of $\T \pr^2$ with respect to a point $v$ generic with respect to $C$ in the same component of $\T \pr^2 \backslash C$ as $v'$, the following conditions are satisfied.
\begin{enumerate}
\item \label{IntroItemHyp1} Every vertex of $C$ lying in the interior of a face $\sigma_\eta$ of $\Sigma_v$ is incident to an edge of primitive integer direction $\eta$.

\item \label{IntroItemHyp2} For every edge $e$ of $C$ intersecting a face $\tau_\zeta$ of $\Sigma_v$, such that the primitive integer direction $\overrightarrow{e}$ of $e$ satisfies $|\det (\overrightarrow{e} | \zeta )| = 2$, the real phase structure on $e$ satisfies $\varepsilon \in \E_e$.

\item \label{IntroItemHyp3} For every bounded edge $e$ of $C$ of primitive integer direction $\eta$ intersecting the face $\sigma_\eta$ of $\Sigma_v$, the edge $e$ is twisted if $e \subset \sigma_\eta$, and otherwise the component $e\cap \sigma_\eta$ is relatively non-twisted with respect to the unique real tropical line $(L,\E ')$ through $(v,\varepsilon)$ with $(e\cap \sigma_\eta ) \subset e' \subset L$ and $\E_e = \E_{e'}'$. 
\end{enumerate}
\end{theorem}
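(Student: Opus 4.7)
The plan is to reduce hyperbolicity to the requirement that every tropical line through $v$ intersect $C$ in a configuration whose every intersection point lifts to real ones, and then apply the intersection theorems of the previous sections (\Cref{CorTransMult2}, \Cref{IntroTh4.3.9} and \Cref{IntroTh4.3.10}) to each type of tropical intersection that can arise.

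First I would use the fact that hyperbolicity of $\mathcal{C}$ with respect to $p$ is equivalent to every real line through $p$ meeting $\mathcal{C}$ in $d$ real points (counted with multiplicity). In the tropical limit this becomes: for every real tropical line $(L,\E')$ through $(v,\varepsilon)$ (meaning $\varepsilon \in \E'_\ell$ on the edges of $L$ containing $v$) and every pair of algebraic curves $(\mathcal{L},\mathcal{C})$ near the non-singular tropical limits $((L,\E'),(C,\E))$, the tropical intersection $L \cap C$ must lift to $d$ real intersection points of $\mathcal{L} \cap \mathcal{C}$. Next I would parameterise the tropical lines through $v$: the vertex of $L$ lies either at $v$, or at $w = v + s(-\eta)$ with $s > 0$ and $\eta \in \{(1,0),(0,1),(-1,-1)\}$, i.e., inside the sector $\sigma_\eta$ of $\Sigma_v$. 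In this case the edge of $L$ of primitive direction $\eta$ extends from $w$ along the entire ray $\tau_\eta$ past $v$, while the other two edges emanate from $w$ and sweep the remaining sectors as $s$ varies.

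The possible configurations of $L \cap C$ then fall into three classes which I would analyse in turn:
\begin{enumerate}[label=(\alph*)]
\item Transverse intersections in the interiors of edges of $L$ and $C$. For every edge $e$ of $C$ crossing $\tau_\zeta$, the tropical intersection with the $\zeta$-edge of $L$ has multiplicity $|\det(\overrightarrow{e}|\zeta)|$, and \Cref{CorTransMult2} shows it lifts to real points if and only if either the multiplicity equals $1$, or it equals $2$ and $\varepsilon \in \E_e$; this yields condition \Cref{IntroItemHyp2}.
\item Coincidence of the vertex of $L$ with a vertex $u$ of $C$ in some $\sigma_\eta$. A local analysis shows that the resulting tropical intersection lifts to totally real points if and only if the three edges of $C$ at $u$ include one of primitive direction $\eta$, so that the non-transverse piece of the intersection reduces to case (c) below; this yields condition \Cref{IntroItemHyp1}.
\item Parallel edges: when an edge $e$ of $C$ of direction $\eta$ meets $\sigma_\eta$, the edge of $L$ through $\tau_\eta$ either entirely contains $e$ or overlaps it in a proper sub-segment, and the real phase structure on the overlap is forced by the choice of $\varepsilon$. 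Applying \Cref{IntroTh4.3.9} in the contained case and \Cref{IntroTh4.3.10} in the partial-overlap case yields condition \Cref{IntroItemHyp3} (twisted in the first case, relatively non-twisted in the second).
\end{enumerate}

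The main obstacle is case (b), where one must show both that condition \Cref{IntroItemHyp1} is necessary -- by exhibiting, when it fails, a tropical line $L$ producing a non-real intersection point with $\mathcal{C}$ -- and that when it holds the analysis reduces cleanly to case (c). Necessity of each condition overall follows by constructing explicit witnesses of non-real intersections when the conditions fail; sufficiency follows by accumulating the three conditions and checking that every tropical line $L$ through $(v,\varepsilon)$ and every pair $(\mathcal{L},\mathcal{C})$ near the corresponding limits produces $d$ real intersection points. Finally, independence of the choice of generic $v$ within the connected component of $\T\pr^2 \setminus C$ containing $v'$ follows since the combinatorial type of $\Sigma_v$ relative to $C$ is locally constant on this component, so the three conditions depend only on the component itself.
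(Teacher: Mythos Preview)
Your overall strategy matches the paper's: reduce hyperbolicity to the statement that every real tropical line through $(v,\varepsilon)$ meets $(C,\E)$ only in components that lift to real points, then run through the possible shapes of a component of $C\cap L$ and apply \Cref{CorTransMult2}, \Cref{IntroTh4.3.9}, \Cref{IntroTh4.3.10}. The paper does exactly this, splitting into four cases (transverse point; isolated vertex of $C$; full bounded edge of $C$; proper segment) rather than your three.

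There is, however, a genuine gap in your case analysis. Your case (a) only treats transverse intersections occurring on a ray $\tau_\zeta$, where the edge of $L$ involved is the one through $v$ and hence its real phase is pinned by $\varepsilon$. But a transverse intersection can equally well occur in the interior of a $2$-face $\sigma_\eta$, where the relevant edge of $L$ has direction $\eta$ and is \emph{not} the one through $v$. In that situation there are two admissible real phase structures $\E'$ on $L$ through $(v,\varepsilon)$, and for one of them $\E_e\cap\E'_{e'}=\emptyset$; so by \Cref{CorTransMult2} the lift is totally real for \emph{all} such $(L,\E')$ only when $|\det(\overrightarrow{e}\,|\,\eta)|\le 1$. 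This is precisely how the paper obtains \Cref{IntroItemHyp1}, via a small lemma (\Cref{LemDirEdge}) saying that $|\det(\overrightarrow{e}\,|\,\eta)|\le 1$ for every edge at a trivalent vertex $u$ is equivalent to $u$ being incident to an edge of direction $\eta$. Your case (b) touches this only in the degenerate situation where the vertex of $L$ lands exactly on a vertex of $C$; the paper instead handles that degenerate case by perturbing back to a nearby transverse intersection of the type just described. Without treating transverse points in the interior of $\sigma_\eta$, neither necessity nor sufficiency of \Cref{IntroItemHyp1} is established.

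A second, smaller point: in your case (a) you implicitly assume the transverse multiplicity on $\tau_\zeta$ is at most $2$. The paper explains why this holds: once \Cref{IntroItemHyp1} is in force, the two endpoints of an edge $e$ crossing $\tau_\zeta$ lie in adjacent faces $\sigma_\eta,\sigma_\xi$ and are each incident to an edge of the corresponding direction, and non-singularity of $C$ then forces $|\det(\overrightarrow{e}\,|\,\zeta)|\le 2$. You should make this dependence explicit, since otherwise \Cref{CorTransMult2} does not by itself rule out higher even multiplicities producing non-real points regardless of the phase.
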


A \emph{honeycomb} is a tropical curve with all edges of primitive integer direction in $\{ (1,0) , (0,1) , (1,1) \}$.
Speyer \cite[Section 5.3]{speyer2005horn} proved that a non-singular stable curve in $\pr^2$ near the tropical limit induces the data of a honeycomb (possibly with some contracted edges) in the tropical projective plane $\T \pr^2$, with constant distribution of signs on the dual subdivision.
Conversely, he proved that a non-singular honeycomb in the tropical projective plane $\T \pr^2$ with constant distribution of signs on the dual subdivision (hence  with every bounded edge twisted) is the non-singular tropical limit of a stable curve in $\pr^2$. 
We give in \Cref{ThStable} an alternative approach to Speyer's result in the case of non-singular tropical limit based on \Cref{IntroTh4.4.4}.

We can treat the case of honeycombs further, as these tropical curves automatically satisfy \Cref{IntroItemHyp1} and \Cref{IntroItemHyp2} of \Cref{IntroTh4.4.4} for any point $v'$ in $\T \pr^2$.
A multi-bridge on a honeycomb $C$ is a set $B$ of parallel bounded edges of $C$, dual to edges lying on a common line in the dual subdivision, and such that $C\backslash B$ has two connected components (see \Cref{DefMultiBridge}).
The multi-bridges on a honeycomb form a basis for dividing patchworking (\Cref{PropHoneyBridge}). 
We show that if a real algebraic curve near the non-singular tropical limit converges to a non-singular honeycomb, checking if a connected component belongs to the tropical hyperbolicity locus reduces to the following criterion on twisted edges. 
Let $\alpha = (\alpha_1 , \alpha_2)$ be an integer point in the dual subdivision of a non-singular honeycomb $C$ of degree $d$ in $\T \pr^2$, and let $\alpha^\vee$ be the connected component of $\T \pr^2 \backslash C$ dual to $\alpha$.
In the following result, we say that a multi-bridge $B$ is on the \emph{left} of $\alpha^\vee \subset \T \pr^2 \backslash C$ if the dual set of edges $B^\vee \subset \Delta_C$ lies in a vertical line through an integer point $(\beta_1 , \alpha_2) \in \Delta_C \cap \Z^2$ with $\beta_1 < \alpha_1$.
Similarly, we say that $B$ is \emph{below} $\alpha^\vee$ if $B^\vee$ lies in a horizontal line through a point $(\alpha_1 , \beta_2) \in \Delta_C \cap \Z^2$ with $\beta_2 < \alpha_2$, and we say that $B$ is \emph{diagonally above} $\alpha^\vee$ if $B^\vee$ lies in a line of direction $(1,-1)$ through a point $(\beta_1 , \beta_2) \in \Delta_C \cap \Z^2$ with $\alpha_1 + \alpha_2 < \beta_1 + \beta_2$.
\begin{corollary}[\Cref{CorHypHoneyBridge}]
\label{IntroCorHypHoneyBridge}
Let $C$ be a non-singular honeycomb of degree $d$ in the tropical projective plane $\T \pr^2$, and let $\E$ be a real phase structure on $C$ inducing the dividing set of twisted edges $T$ on $C$.
Let $\alpha = (\alpha_1 , \alpha_2 ) \in \Delta_C \cap \Z^2$ be an integer point in the dual subdivision of $C$.
For every real algebraic curve $\mathcal{C}$ of degree $d$ in $\pr^2$ near the non-singular tropical limit $(C,\E )$, the tropical hyperbolicity locus of $\mathcal{C}$ contains the connected component $\alpha^\vee \subset (\T\pr^2 \backslash C)$ dual to $\alpha$, if and only if for every multi-bridge $B$ on the left, below and diagonally above $\alpha^\vee$ on $C$, the edges of $B$ are twisted.
\end{corollary}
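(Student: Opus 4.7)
The plan is to specialize Theorem~\ref{IntroTh4.4.4} to the honeycomb setting. My first step is to check that conditions~\ref{IntroItemHyp1} and~\ref{IntroItemHyp2} hold automatically for any non-singular honeycomb $C$ and any generic $v \in \T \pr^2 \setminus C$. Every vertex of a non-singular honeycomb is trivalent with edges of primitive integer directions $(1,0)$, $(0,1)$, $(1,1)$, which up to sign coincide with the three directions indexing the faces of $\Sigma_v$; thus each vertex of $C$ possesses an edge of direction $\eta$ for every $\eta$, giving~\ref{IntroItemHyp1}. A direct determinant computation over all pairs $\overrightarrow{e} \in \{(1,0),(0,1),(1,1)\}$ and $\zeta \in \{(1,0),(0,1),(-1,-1)\}$ yields $|\det(\overrightarrow{e} \mid \zeta)| \in \{0,1\}$, so the hypothesis of~\ref{IntroItemHyp2} is never met and the condition is vacuous.

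It then remains to recast condition~\ref{IntroItemHyp3} in terms of multi-bridges. I would pick a generic $v \in \alpha^\vee$ and, for each primitive direction $\eta$, group the bounded edges of $C$ of direction $\eta$ by the common line of $\Delta_C$ containing their duals; by \Cref{PropHoneyBridge} the resulting groups are precisely the multi-bridges of $C$ of that direction. The next observation is that a horizontal multi-bridge $B$ whose dual lies on the line $\{x=\beta_1\} \subset \Delta_C$ has edges intersecting $\sigma_{(1,0)}$ if and only if $\beta_1 < \alpha_1$, in other words if and only if $B$ lies ``on the left'' of $\alpha^\vee$; the analogous statements for vertical and diagonal multi-bridges identify $\sigma_{(0,1)}$ and $\sigma_{(-1,-1)}$ with the ``below'' and ``diagonally above'' multi-bridges. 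Consequently the edges appearing in~\ref{IntroItemHyp3} are exactly the edges of multi-bridges of one of these three types.

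Within each such multi-bridge $B$, the edges split into those entirely contained in $\sigma_\eta$, for which~\ref{IntroItemHyp3} directly demands that the edge be twisted, and at most two boundary edges $e$ partially cut by a ray of $\Sigma_v$, for which~\ref{IntroItemHyp3} demands that $e\cap\sigma_\eta$ be relatively non-twisted with respect to the unique real tropical line $(L,\E')$ through $(v,\varepsilon)$ with matching phase $\E_e=\E_{e'}'$. The technical heart of the argument, and the main obstacle I anticipate, is to verify that this ``relatively non-twisted'' condition for a boundary edge is equivalent to $e$ itself being twisted. For such $e$, the edge $e' \subset L$ is parallel to $e$ and runs along the ray of $\Sigma_v$ cutting $e$; unravelling \Cref{DefRelTwist} together with the phase-matching $\E_e=\E_{e'}'$ should show that $e\cap\sigma_\eta$ is relatively non-twisted with respect to $(L,\E')$ precisely when $e$ is twisted. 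Combining the three directions, condition~\ref{IntroItemHyp3} becomes equivalent to twistedness of every multi-bridge on the left, below, or diagonally above $\alpha^\vee$, yielding the corollary.
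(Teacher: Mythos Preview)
Your plan diverges from the paper's route at exactly the point you flag as ``the main obstacle,'' and that step is a genuine gap rather than a routine unwinding of definitions. The paper does not deduce the corollary by directly specialising \Cref{IntroTh4.4.4}. It first proves an intermediate statement (\Cref{propHypHoney}) by induction on the degree $d$: one peels off a multi-bridge $B$ to decompose $C$ into a degree-$(d{-}1)$ honeycomb $C'$ glued to a strip, and then exploits the fact that the tropical hyperbolicity locus depends only on the dual subdivision and the sign data (not on edge lengths) to rescale $C$ so that the rays of $\Sigma_{v'}$ cut only bounded edges of $C'$. All relatively non-twisted conditions in \Cref{IntroItemHyp3} are then absorbed into the induction hypothesis on $C'$, while the edges of $B$ lie entirely inside $\sigma_\eta$ and are twisted by assumption. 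The corollary is then just the translation of ``multi-bridge containing an edge in $\operatorname{Int}(\sigma_\eta)$'' into the left/below/diagonally-above language of $\Delta_C$.

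Your proposed equivalence ``$e\cap\sigma_\eta$ relatively non-twisted with respect to $(L,\E')$ $\Longleftrightarrow$ $e$ twisted'' is not a definition-chase: the relative-twist condition of \Cref{DefRelTwist} compares the phases on the edges of $C$ at the vertex of $e$ lying \emph{inside} $\sigma_\eta$ against the phases on the edges of $L$ at its vertex on $\tau_\zeta$, whereas the twist condition on $e$ compares phases at \emph{both} vertices of $e$, one of which lies outside $\sigma_\eta$ and does not enter the relative-twist picture at all. Moreover, the real tropical line $(L,\E')$ depends on the choice of $\varepsilon$, and the corollary concerns membership in the tropical hyperbolicity locus, i.e.\ the existence of a \emph{single} $\varepsilon$ for which all of \Cref{IntroItemHyp3} holds simultaneously; you have not explained why one $\varepsilon$ can be chosen to make the relative-twist conditions for all boundary edges (across all three directions) line up at once. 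The paper's inductive argument is designed precisely to avoid both of these difficulties.
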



The article will be organised as follows.
We begin by recalling standard notions about tropical curves in \Cref{Sec2}.
In \Cref{Sec3}, we recall how to use combinatorial patchworking in order to construct the real part of a tropical curve, as well as the notion of twisted edges a tropical curve, and finally how to go back and forth between the various descriptions.
We study in \Cref{SectionIntersection} the intersection locus of real algebraic curves near the non-singular tropical limit and prove \Cref{IntroTh4.3.9} and \Cref{IntroTh4.3.10}. 
We consider in \Cref{SecHyperbolic} hyperbolic plane curves near the non-singular tropical limit, starting by stating the hyperbolicity criterion in terms of twisted edges (\Cref{IntroPropHypTwist}).
We then define the tropical hyperbolicity locus and the signed tropical hyperbolicity locus, and relate this to the notions of tropical spectrahedra and signed tropical convexity (\Cref{RkTropSpec} and \Cref{RkTropConv}).
We continue by proving the intersection-theoretic characterisation of the hyperbolicity locus (\Cref{IntroTh4.4.4}) and deduce from it Speyer's criterion for stable curves, in the case when the tropical limit is non-singular (\Cref{ThStable}).
Then, we introduce a $\Z_2$-vector space structure on the set of admissible twists on a tropical curve, in order to study the honeycomb case and prove  \Cref{IntroCorHypHoneyBridge}.

\bigskip

\noindent \textbf{Acknowledgements.}
The author would like to thank Kris Shaw and Fr{\' e}d{\' e}ric Bihan for their support, interest and useful discussions in this project, as well as Matilde Manzaroli for pointing out the reference for Orevkov's hyperbolicity criterion, Hannah Markwig for the help in Singular computations, and Georg Loho and Mateusz Skomra for discussing their new notion of signed tropical convexity.
The research of the author is supported by the Trond Mohn Stiftelse (TMS) project ``Algebraic and topological cycles in complex and tropical geometry"

\section{Tropical curves}
\label{Sec2}

We recall in this section several definitions in tropical geometry.
For more details and examples, one can read \cite[Section 2]{brugalle2015brief}, \cite[Chapter 1]{itenberg2009tropical}, \cite{maclagan2009introduction}.

\subsection{Tropical curves in $\R^2$}

A \emph{tropical polynomial in two variables} is 
\[ P(x,y) = \max\limits_{i,j \in A} (a_{i,j} + ix +jy) ,    \]
where $A$ is a finite subset of $(\Z_{\geq 0})^2$ and the coefficients $a_{i,j}$ are in the tropical semi-ring $\T = (\R \cup \{ -\infty \} , \max , +)$.
Note that we use the ``max" operation, while some references in the literature use the ``min" operation instead.
Thus, a tropical polynomial is a convex piecewise affine function, with corner locus 
\[ V_P = \{ (x_0 , y_0)\in \R^2 ~ | ~ \exists (i,j)\neq (k,l), ~ P(x_0 , y_0) =  a_{i,j} + ix_0 + jy_0  =  a_{k,l} +  kx_0 + ly_0  \} .  \]
The set $V_P$ is a 1-dimensional polyhedral complex in $\R^2$.

\begin{definition}
The \emph{weight function} is defined on the edges $e$ of $V_P$ as 
\[ w(e) := \max\limits_{(i,j) , (k,l)} ( \gcd (|i-k|,|j-l|))     \]
for all pairs $(i,j)$ and $(k,l)$ such that the value of $P(x,y)$ on $e$ is given by the corresponding monomials.
The \emph{tropical curve} $C \subset \R^2$ defined by $P(x,y)$ is the 1-dimensional polyhedral complex $V_P$ equipped with the weight function $w$ on the edges.
\end{definition} 

\subsection{Dual subdivision}

Let $P(x,y) = \max\limits_{i,j \in A} (a_{i,j} +  ix + jy)$ be a tropical polynomial defining a tropical curve $C \subset \R^2$.
The \emph{Newton polygon} of $P(x,y)$, denoted by $\Delta (P)$, is defined as the convex hull
\[ \Delta (P) = \Conv \{  (i,j) \in (\Z_{\geq 0})^2 ~ | ~ a_{i,j} \neq -\infty \} \subset \R^2 .   \]
The tropical polynomial $P$ also determines a subdivision of $\Delta (P)$, as follows.
Given $(x_0 , y_0) \in \R^2$, let
\[ \Delta(x_0 ,y_0 ) = \Conv \{ (i,j) \in (\Z_{\geq 0})^2 ~ | ~ P(x_0 , y_0) =  a_{i,j} + i x_0 +j y_0 \} \subset \Delta (P).  \]
The tropical curve $C$ induces a polyhedral decomposition of $\R^2$, and the polygon $\Delta (x_0 ,y_0 )$ only depends on the cell $F \ni (x_0 ,y_0)$ of the decomposition given by $C$.
Thus, we define the face $F^\vee := \Delta (x_0 ,y_0 )$ dual to $F$ for $(x_0 , y_0) \in F$.
The \emph{dual subdivision} $\Delta_C$ of the tropical curve $C\subset \R^2$ is the union
\[ \Delta_C := \bigcup_F  F^\vee  ,  \]
for $F$ running through the faces of the decomposition of $\R^2$ induced by $C$.

\begin{definition}
We say that a tropical curve $C$ is \emph{non-singular} if its dual subdivision $\Delta_C$ is a triangulation with each 2-dimensional cell of Euclidean area $1/2$. 
\end{definition}

Equivalently, a tropical curve $C \subset \R^2$ with Newton polytope $\Delta$ is non-singular if and only if $C$ has $2 \Area (\Delta )$ vertices.
In particular, if a tropical curve $C \subset \R^2$ is non-singular, every vertex of $C$ is 3-valent and every edge $e$ of $C$ has weight $w(e) = 1$, and all integer points of $\Delta_C$ are vertices in the subdivision.

\subsection{Puiseux series and tropicalisation}

\begin{definition}[\cite{itenberg2009tropical}]
A \emph{locally convergent generalized Puiseux series} is a formal series of the form
\[ \alpha (t) = \sum\limits_{r\in R} \alpha_r t^{r}  \]
where $R\subset \R$ is a well-ordered set, the coefficient $\alpha_r$ belongs to $\C$, and the series is convergent for $t\in \R_{>0}$ small enough.
We denote by $\K$ the set of all locally convergent generalized Puiseux series.
An element $\alpha := \alpha (t)$ of $\K$ is said to be \emph{real} if all its coefficients $\alpha_r$, with $r\in R$, belong to $\R$.
We denote by $\K_\R$ the subset of real locally convergent generalized Puiseux series.
\end{definition}

The sets $\K$ and $\K_\R$ are both fields of characteristic 0. 
The field $\K$ is algebraically closed, and the subfield $\K_\R$ is real closed (ie. its field extension by the square root of $-1$ is algebraically closed, \cite[Theorem 1.2.2]{bochnak2013real}).
The field $\K$ has a \emph{non-archimedean valuation} defined as follows (see \cite{bochnak2013real}):
\begin{align*}
\val : \K & \rightarrow \R \cup \{ \infty \} \\
0 & \mapsto \infty \\
\sum_{r\in R} \alpha_r t^r \neq 0 & \mapsto  \min_R \{ r ~ | ~ \alpha_r \neq 0 \} . 
\end{align*} 
The restriction of $\val$ to the subfield $\K_\R$ allows to define an \emph{ordering} on $\K_\R$ as follows (see \cite{bochnak2013real}).
An element $\alpha \in \K_\R$ is \emph{positive}, and we denote $\alpha > 0$, if the leading coefficient $\alpha_r \in \R, r :=  \val (\alpha )$ of $\alpha$ is positive with respect to the standard ordering of $\R$. 
Similarly, an element $\alpha \in \K_\R$ is \emph{negative}, and we denote $\alpha < 0$, if the leading coefficient $\alpha_r \in \R, r := \val (\alpha )$ of $\alpha$ is negative with respect to the standard ordering of $\R$.
Then for any two elements $\alpha , \beta \in \K_\R$, we have $\alpha < \beta$ if and only if $\beta - \alpha > 0$. 

Since the elements of $\K$ are convergent for $t\in \R_{>0}$ small enough, an algebraic hypersurface $\mathcal{X}$ over $\K$ can be seen as a one-parameter family $(\mathcal{X}_t)_t$ of algebraic hypersurfaces over $\C$, and an algebraic hypersurface $\mathcal{X}$ over $\K_\R$ can be seen as a one-parameter family $(\mathcal{X}_t)_t$ of algebraic hypersurfaces over $\R$.

\begin{definition}
The valuation map $\val$ allows to define a map from $(\K^\times )^2$ to $(\T^\times )^2$ as follows:  
\begin{align*}
\Trop_0 : (\K^\times )^2 & \rightarrow (\T^\times )^2 = \R^2 \\
(z , w) & \mapsto (-\val (z) , -\val (w)).
\end{align*}
The \emph{tropicalisation} $\Trop (p)$ of a point $p \in (\K^\times )^2$ is the image of $p$ via $\Trop_0$.
For $\mathcal{C}$ an algebraic curve in the algebraic torus $(\K^\times )^2$, the \emph{tropicalisation} $\Trop (\mathcal{C})$ of $\mathcal{C}$ is the tropical curve $C \subset \R^2$ obtained as the image of $\mathcal{C}$ via $\Trop_0$ equipped with the weight function $w$ on its edges. 
\end{definition}


\begin{theorem}[\cite{kapranov2000amoebas}]
\label{ThKapMik}
Let $\mathcal{P}$ be a polynomial in $\K [z,w]$ defined as 
\[ \mathcal{P} (z,w) = \sum_{(i,j) \in A} a_{i,j} z^i w^j . \]
Let $P$ be the tropical polynomial in $\T [x,y]$ defined as
\[ P (x,y) = \max_{(i,j) \in A} ( -\val (a_{i,j}) + ix + jy) \in \T [x,y]. \]
For $V (\mathcal{P})$ the zero set of the polynomial $\mathcal{P}$ and for $C$ the tropical curve defined by the tropical polynomial $P$, we have 
\[ \Trop (V (\mathcal{P})) = C \subset \R^2 . \]
\end{theorem}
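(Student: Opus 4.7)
The plan is to prove the two set-theoretic inclusions $\Trop(V(\mathcal{P})) \subseteq C$ and $C \subseteq \Trop(V(\mathcal{P}))$ separately, since the first is essentially a direct manipulation of the valuation axioms while the second requires a root-lifting argument in $\K$.

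For the easy inclusion, let $(z_0, w_0) \in V(\mathcal{P}) \cap (\K^\times)^2$ and set $(x_0, y_0) := \Trop(z_0, w_0) = (-\val(z_0), -\val(w_0))$. From $\sum_{(i,j)\in A} a_{i,j} z_0^i w_0^j = 0$ in $\K$, I would invoke the standard ultrametric fact that if a finite sum of elements of $\K$ vanishes, then the minimum of their valuations is attained by at least two summands; otherwise the summand of strictly smallest valuation would dominate and force the sum to have that same finite valuation. Applied to the terms $a_{i,j} z_0^i w_0^j$, whose valuations are $\val(a_{i,j}) - i x_0 - j y_0$, this gives indices $(i,j)\neq (k,l)$ with $-\val(a_{i,j}) + i x_0 + j y_0 = -\val(a_{k,l}) + k x_0 + l y_0 = P(x_0, y_0)$, so $(x_0, y_0) \in V_P = C$.

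For the reverse inclusion, given $(x_0, y_0) \in C$, I plan to produce an element of $V(\mathcal{P})$ tropicalising to it by a Newton-polygon / lifting argument. Let $F^\vee \subset \Delta_C$ be the dual cell at $(x_0, y_0)$; by hypothesis $|F^\vee \cap \Z^2| \geq 2$. Consider the substitution $z = \zeta t^{-x_0}$, $w = \eta t^{-y_0}$ with $\zeta, \eta \in \K^\times$ of valuation $0$, so that
\[ \mathcal{P}(z,w) \;=\; t^{-P(x_0,y_0)} \Bigl( \sum_{(i,j)\in F^\vee \cap \Z^2} a_{i,j}^{(0)} \zeta^i \eta^j \;+\; (\text{terms of strictly positive } t\text{-valuation}) \Bigr), \]
where $a_{i,j}^{(0)} \in \C^\times$ denotes the leading coefficient of $a_{i,j}$. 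The ``initial form'' $\mathrm{in}_{(x_0,y_0)}\mathcal{P}(\zeta,\eta) := \sum_{(i,j)\in F^\vee \cap \Z^2} a_{i,j}^{(0)} \zeta^i \eta^j$ is a non-monomial Laurent polynomial over $\C$ supported on at least two lattice points, and after factoring out a monomial it has a zero $(\zeta_0, \eta_0) \in (\C^\times)^2$ by a standard elementary argument (either by projecting to one variable and invoking the fundamental theorem of algebra, or directly by Bezout).

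It remains to lift such a zero of the initial form to an actual zero $(\zeta, \eta) \in (\K^\times)^2$ of $\mathcal{P}(\zeta t^{-x_0}, \eta t^{-y_0})$ with $\zeta \equiv \zeta_0$, $\eta \equiv \eta_0$ modulo terms of positive valuation; the resulting $(z, w)$ then has $\Trop(z,w) = (x_0, y_0)$. This is the main technical obstacle, since it is really an implicit-function / Hensel statement over the valued field $\K$: one has to choose one of the two variables as a parameter (transversality of the initial form guarantees this is possible after perhaps perturbing $(x_0, y_0)$ within $C$) and iteratively correct the coefficients of the resulting series expansion in the other variable. I would appeal to the existence of solutions in $\K$ of polynomial equations whose Newton polygon has a slope, a fact which follows from $\K$ being algebraically closed together with the description of $\K$ as (a subfield of) an algebraically closed valued field; equivalently, one can cite the one-variable Newton-Puiseux theorem applied to a polynomial in $\eta$ whose coefficients lie in $\K[\zeta^{\pm 1}]$ after specialising $\zeta$. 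This completes the construction of $(z_0, w_0) \in V(\mathcal{P})$ tropicalising to $(x_0, y_0)$ and establishes $C \subseteq \Trop(V(\mathcal{P}))$.
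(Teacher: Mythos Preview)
The paper does not give its own proof of this statement: Theorem~\ref{ThKapMik} is stated with a citation to \cite{kapranov2000amoebas} and no argument follows, so there is nothing in the paper to compare your proposal against. It is a background result quoted from the literature, not something the author proves.

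That said, your outline is the standard two-inclusion proof of Kapranov's theorem as found in the tropical geometry literature (e.g.\ Maclagan--Sturmfels or Einsiedler--Kapranov--Lind). The inclusion $\Trop(V(\mathcal{P}))\subseteq C$ via the ultrametric ``minimum attained twice'' argument is correct as written. For the reverse inclusion your reduction to the initial form $\mathrm{in}_{(x_0,y_0)}\mathcal{P}$ and the existence of a torus zero of a non-monomial Laurent polynomial over $\C$ are fine. The lifting step is the only place that is genuinely sketchy: the cleanest way to close it, rather than invoking Hensel or an iterative correction (which runs into trouble when the initial-form zero is not simple), is to specialise one variable, say $\eta$, to an element of $\K^\times$ of valuation $0$ whose leading coefficient is $\eta_0$, and then use directly that $\K$ is algebraically closed to factor the resulting one-variable polynomial in $\zeta$ completely over $\K$; the Newton polygon of that polynomial has a vertex contributed by the monomials in $F^\vee$, forcing at least one root of valuation $0$, and one checks its leading coefficient is a zero of the initial form. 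This avoids any transversality or simplicity hypothesis and makes the argument self-contained.
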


\subsection{Compactification in the tropical projective plane}


\begin{definition}
We denote again by $\Trop_0$ the extension of the map $\Trop_0$ to the toric surface $\pr_\K^2$.
The \emph{tropical projective plane} \[ \T \pr^2 := \frac{\T^3 \backslash (-\infty , -\infty , -\infty)}{\T^\times} \] is the image of $\pr_\K^2$ via the map $\Trop_0$, such that $\T \pr^2$ has a stratification compatible with the toric stratification on $\pr_\K^2$, and $\T \pr^2$ has maps to affine charts $(\T^\times)^2 = \R^2$.
\end{definition}

A point $v$ in $\T \pr^2$ is written of the form $v = [v_0  :  v_1  :  v_2]$, with $v_i \in \T$ for $i=0,1,2$ and not all $v_i$ equal to $- \infty$.
The equivalence relation induced by $\T^\times = \R$ identifies the point $[v_0  :  v_1  :  v_2]$ with the point $[v_0 + \lambda  :  v_1 + \lambda  :  v_2 + \lambda ]$ for any $\lambda \in \R$.
The 2-dimensional simplex in \Cref{tropline} gives a representation of $\T \pr^2$.
Each black edge represents a coordinate hyperplane $x_i = -\infty$, for $i=0,1,2$.

We refer to \cite{payne2009analytification} for more details on this extended tropicalisation, and to \cite{maclagan2009introduction},\cite{mikhalkin2009tropical} for the direct construction of tropical toric varieties (ie.~ without going through tropicalisation).

\begin{definition}
Let $\Delta_d$ be the 2-dimensional lattice polytope defined as the convex hull of the points $(0,0),(d,0),(0,d)$, for $d \in \Z_{\geq 1}$.
A \emph{tropical curve of degree $d$ in $\T \pr^2$} is the compactification in $\T \pr^2$ of a tropical curve $C'$ in $\R^2$ with Newton polygon $\Delta_d$.
\end{definition}

\begin{figure}
\centering
	\includegraphics[width=0.3\textwidth]{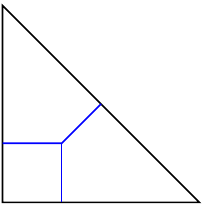}
	\caption{Tropical line in $\T \pr^2$ in \Cref{ExTropLine}.}
	\label{tropline}
\end{figure}

\begin{example}
\label{ExTropLine}
A tropical line in $\T \pr^2$ is a tropical curve of degree 1 in $\T \pr^2$.
A defining tropical polynomial of a tropical line $L$ in $\T \pr^2$ is a homogeneous tropical polynomial in $\T [x_0 , x_1 , x_2]$ of the form 
\[ P (x_0 , x_1 , x_2) = \max (x_0 + a_0 , x_1 + a_1 , x_2 + a_2) . \]
Every tropical line $L$ in $\T \pr^2$ is given as the compactification of a tropical line $L'$ in $\R^2$, such that $L'$ has defining tropical polynomial in $\T [x_1 , x_2]$ of the form 
\[ P' (x_1 , x_2) = \max (a_0 , x_1 + a_1 , x_2 + a_2) . \]
An example of tropical line in $\T \pr^2$ is represented in \Cref{tropline} inside $\T \pr^2$. 
\end{example}

\begin{definition}
Let $C$ be a non-singular tropical curve of degree $d$ in $\T \pr^2$, with $C$ the compactification of a non-singular tropical curve $C'$ in $\R^2$ with Newton polygon $\Delta_d$.
We say that an edge $e$ of $C$ is \emph{bounded} if $e$ is bounded in $C'$, and \emph{unbounded} if $e$ is unbounded in $C'$.
The set of bounded edges of $C$ is denoted $\Edge^0 (C)$.
\end{definition}

\section{Real part of a tropical curve}
\label{Sec3}

We recall in this section the notions of combinatorial patchworking, twisted edges on tropical curves, and the relation between those two notions.
One can read \cite{viro1980curves} for the first introduction of Viro's patchworking and \cite[Chapter 2]{itenberg2009tropical} for a detailed survey.
See also \cite{viro2001dequantization} for a link between amoebas and tropical geometry, and \cite{brugalle2015brief}, \cite{renaudineau2017haas}, \cite{mikhalkin2018tropical} for several approaches to twisted edges on non-singular tropical curves.


\subsection{Combinatorial patchworking}

Given a non-singular tropical curve $C$ in $\R^2$ or $\T \pr^2$ and a distribution of signs $\delta$ on the integer points of the dual subdivision $\Delta_C$, we recall the construction of the \emph{real part} of the tropical curve $C$ with respect to $\delta$.

Let $\mathcal{R}^2$ be the group of symmetries in $\R^2$ generated by the reflections with respect to the coordinate hyperplanes.
Throughout the article, for $A$ a topological set, we will denote by $A^*$ the disjoint union of symmetric copies
\[ A^* := \bigsqcup\limits_{\varepsilon \in \mathcal{R}^2} \varepsilon (A) .\]    
For $\Z_2 := \Z / 2\Z$, we identify $\mathcal{R}^2$ with $\Z_2^2$ as follows.
If $\varepsilon = (\varepsilon_1 , \varepsilon_2 ) \in \Z_2^2$, then $\varepsilon \in \mathcal{R}^2$ is the symmetry defined by 
\[ \varepsilon (x,y) = ((-1)^{\varepsilon_1} x , (-1)^{\varepsilon_2} y) , \]
for $\varepsilon_1$ and $\varepsilon_2$ seen as integers. 
For $\Delta_C$ the dual subdivision of a non-singular tropical curve $C$, a map of the form
\[ \delta : \Delta_C \cap \Z^2 \rightarrow \{ +1,-1 \} \]
is called a \emph{distribution of signs} on $\Delta_C$.
We extend a distribution of signs $\delta$ to a distribution of signs on the integer points of the union of symmetric copies $\Delta_C^*$ by the following rule.
For $v = (v_1, v_2 )$ an integer point in $\Delta_C^* \cap \Z^2$ and $\varepsilon = (\varepsilon_1 , \varepsilon_2) \in \mathcal{R}^2$ a symmetry, we have
\[
\delta (\varepsilon (v)) = (-1)^{\varepsilon_1 v_1 + \varepsilon_2 v_2} \delta (v) ,
\]
again with $\varepsilon_1$ and $\varepsilon_2$ seen as integers (see for example the signs on \Cref{RealPart}).
A face of $\Delta_C^*$ is said to be \emph{non-empty} if at least two of its vertices have opposite signs by $\delta$.
We can now construct the real part of $C$ with respect to $\delta$ as follows.
For every edge $e$ of $C$, let $e^\vee$ be its dual edge in $\Delta_C$.
The real part $\R e_{\delta}$ of $e$ is defined as 
\[ \R e_\delta := \bigcup\limits_{\varepsilon } \varepsilon (e) \subset (\R^2)^*, \]
for $\varepsilon$ running through the symmetries of $\mathcal{R}^2$ such that $\varepsilon (e^\vee )$ is non-empty (see for example the blue edges in \Cref{RealPart}).

\begin{definition}
Let $C$ be a non-singular tropical curve in $\R^2$ satisfying the same assumptions as above.
The \emph{real part of} $C$ with respect to the distribution of signs $\delta$ is the set
\[ \R C_{\delta} := \overline{ \bigcup_{e} \R e_{\delta} } \subset (\R^2 )^* , \]
for $e$ running through the edges of $C$.
\end{definition}

If $C$ is a non-singular tropical curve in $\T \pr^2$ instead of $\R^2$, the real part $\R C_\delta$ is given by taking the closure inside $(\T \pr^2)^*/ \sim$ instead of $(\R^2 )^*$, where $\sim$ is given as follows.
If $\Gamma$ is a face in the stratification of $(\T \pr^2)^*$, then for all integer vectors $(\alpha_1 , \alpha_2 )$ orthogonal to all primitive integer direction in $\Gamma$, we have $\Gamma \sim \varepsilon (\Gamma )$ for $\varepsilon = (\bar{\alpha_1},\bar{\alpha_2}) \in \mathcal{R}^2$.
Recall that the \emph{sign} of an element $\alpha \in \K_\R$ is given by the sign of the coefficient of $\alpha$ of order $\val (\alpha )$.

\begin{theorem}[Viro's patchworking theorem \cite{viro2001dequantization}]
\label{ThViro}
Let $\mathcal{C} := (\mathcal{C}_t)_t$ be a non-singular real algebraic curve in $(\K^\times )^2$ (or $\pr_\K^2$), with tropicalisation a non-singular tropical curve $C$ in $\R^2$ (or $\T \pr^2$).
Let $\delta$ be a distribution of signs on $\Delta_C$ induced by the signs of the monomials of a defining polynomial $\mathcal{P}$ of $\mathcal{C}$.
For $t>0$ small enough, there exists a homeomorphism $(\R^\times )^2 \simeq (\R^2 )^*$ (or $\pr^2 (\R ) \simeq (\T \pr^2)^* / \sim$ ) mapping the set of real points $\mathcal{C}_t (\R )$ onto $\R C_{\delta}$.
\end{theorem}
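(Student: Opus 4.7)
The plan is to prove the theorem by a local-to-global analysis of the one-parameter family $\mathcal{C}_t(\R) \subset (\R^\times)^2$ as $t\to 0^+$. First I would pass to logarithmic coordinates $X = -\log_t|x|$, $Y = -\log_t|y|$ on each sign-octant $\varepsilon\cdot (\R_{>0})^2$ of $(\R^\times)^2$, which naturally identifies $(\R^\times)^2$ with $(\R^2)^\ast$ (the sign $\varepsilon \in \mathcal{R}^2$ labelling each symmetric copy). Under the rescaling $x = t^{-X}$, $y = t^{-Y}$ the tropical curve $C$ is recovered as the $t\to 0$ limit of the rescaled real locus, with the local model at each cell governed by the dominant monomials of $\mathcal{P}$. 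It then suffices to cover a neighbourhood of $C$ in $\R^2$ by open neighbourhoods of its vertices and of the interiors of its edges, construct the desired homeomorphism on each, and check consistency on the overlaps.

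Near each vertex $v \in C$ dual to a triangle $T_v = \conv\{(i_k,j_k):k=1,2,3\}$ of Euclidean area $1/2$, the unimodularity provided by non-singularity of $C$ allows an integer change of basis taking $T_v$ to the standard simplex. On a small neighbourhood of $v$ the three monomials corresponding to the vertices of $T_v$ then dominate all others by a uniform factor of $t^\eta$ for some $\eta>0$, and after dividing by one of them and rescaling by suitable powers of $t$ the real equation $\mathcal{P}_t = 0$ reduces to a perturbed trinomial $\delta(i_1,j_1) + \delta(i_2,j_2)\,u + \delta(i_3,j_3)\,v + O(t^\eta)=0$ in new real coordinates $(u,v)$. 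The zero set of the leading trinomial, taken over all four sign-copies of $(u,v)$, is a topological tripod meeting precisely the sign-copies in which $\delta$ is non-constant on $\varepsilon(T_v)$; this is exactly the local picture of $\R C_\delta$ at $v$. Applying the implicit function theorem to the perturbation then yields, for $t$ small enough, a local homeomorphism of a neighbourhood of $v$ in $(\R^2)^\ast$ carrying the real part of $\mathcal{C}_t$ onto this tripod.

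Along each bounded edge $e$ of $C$ joining two vertices, two monomials dominate (those dual to the endpoints of $e^\vee$), and the analogous reduction turns the real equation into a perturbed binomial $\delta(i_1,j_1)\,u^{i_1}v^{j_1}+\delta(i_2,j_2)\,u^{i_2}v^{j_2}+O(t^\eta)=0$. The leading binomial admits a real solution in a given sign-copy if and only if $\delta$ takes opposite signs at the two endpoints of $e^\vee$, and in that case describes a single smooth arc parallel to $e$ in the log-picture, matching $\R e_\delta$ on the nose. Unbounded edges are handled identically after passing to $(\T\pr^2)^\ast$, where the quotient by $\sim$ corresponds to the identification of sign-copies induced by sending a coordinate to zero along the primitive integer direction of $e$. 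Patching the vertex-tripods with the edge-arcs on their overlaps and extending by isotopy away from $C$ (where $\mathcal{C}_t(\R)$ is empty for small $t$) yields the desired homeomorphism. The main obstacle is the uniform control of the error terms $O(t^\eta)$ on the overlaps of vertex- and edge-neighbourhoods: this is where non-singularity of $C$ — trivalence of vertices, unimodularity of the dual triangles, and weight-one edges — is essential, as it guarantees that no further monomials enter the leading-order picture and that the trinomial and binomial local models remain accurate to the required order.
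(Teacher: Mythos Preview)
The paper does not give its own proof of this statement: Theorem~\ref{ThViro} is stated with attribution to \cite{viro2001dequantization} and used as a black box throughout, so there is nothing to compare your argument against within the paper itself.

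That said, your outline is the standard strategy for Viro's theorem in the non-singular case and is broadly sound. The key ingredients---passing to log-coordinates on each orthant, using unimodularity of the dual triangles to reduce the local equation near a vertex to a perturbed standard trinomial, reducing to a binomial along each edge, and then patching---are all correct. Two points deserve more care if you wanted a complete proof rather than a sketch. First, the ``patching on overlaps'' step is where the real work lies: you need to show that the vertex-tripod and the adjacent edge-arcs actually glue to a single embedded $1$-manifold, which typically requires a more careful choice of tubular neighbourhoods (or a global argument via the moment map/amoeba, as in Viro's and Mikhalkin's treatments) rather than just the implicit function theorem applied chart-by-chart. Second, your statement that ``away from $C$ the set $\mathcal{C}_t(\R)$ is empty for small $t$'' is correct but needs the observation that on any compact set bounded away from $C$ a single monomial strictly dominates, so $\mathcal{P}_t$ cannot vanish there; this is easy but should be said. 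With these two points fleshed out your plan would constitute a proof along the lines of \cite{viro2001dequantization} or the exposition in \cite{itenberg2009tropical}.
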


The topological set $(\T \pr^2)^*/ \sim$ is homeomorphic to the (topological) real projective plane $\R \pr^2$.
Hence along the article, we will write $\pr^2 (\R)$ for the real part of the algebraic projective plane, and we will write $\R \pr^2$ for the topological set $(\T \pr^2)^*/ \sim$. 
The homeomorphism between the real part $Y_\Sigma (\R)$ of a toric variety $Y_\Sigma$ defined by a fan $\Sigma$ and a topological set of the form $(\T Y_\Sigma)^* / \sim$, for $\T Y_\Sigma$ the tropical toric variety defined by $\Sigma$, is constructed in \cite[Theorem 11.5.4]{gelfand2014discriminants} in terms of the polytope $\Delta$ dual to $\Sigma$.  

\begin{example}
\label{ExCombPatch}
Let $C$ be a non-singular tropical curve of degree 2 in $\T \pr^2$ as pictured in \Cref{RealPhase}.
The sign distribution $\delta$ on the dual subdivision $\Delta_C$ is pictured in \Cref{SignDistrib}.
Extending the sign distribution to the symmetric copies of $\Delta_C$, we obtain the real part $\R C_\delta$ pictured in color in \Cref{RealPart}.
\end{example}

\begin{figure}
\centering
\begin{subfigure}[t]{0.3\linewidth}
	\centering
	\includegraphics[width=\textwidth]{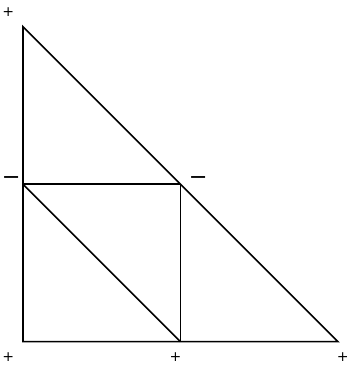}
	\caption{Sign distribution $\delta$ on $\Delta_C$.}
	\label{SignDistrib}
\end{subfigure}\hfill
\begin{subfigure}[t]{0.3\linewidth}
	\centering
	\includegraphics[width=\textwidth]{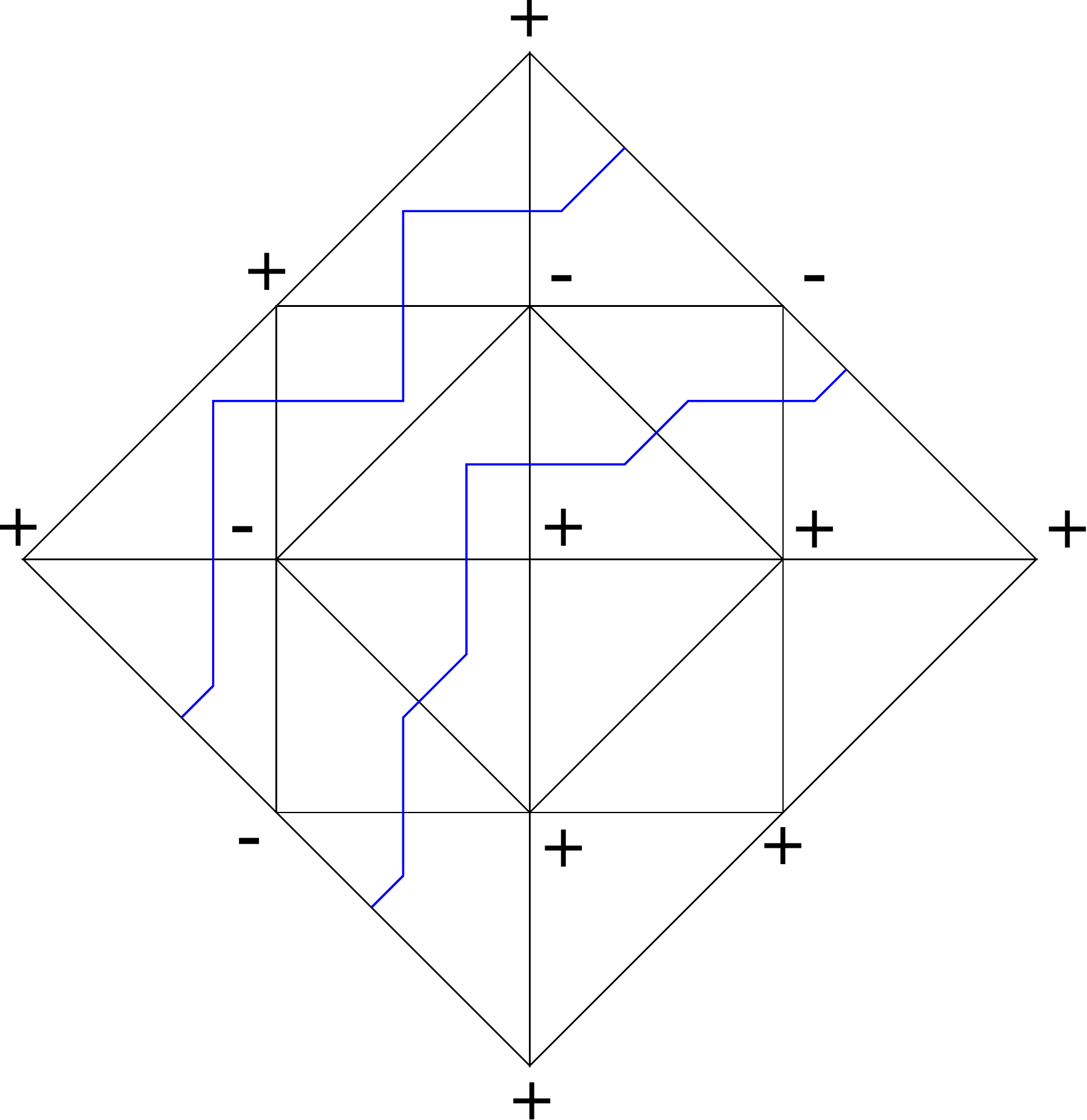}
	\caption{Real part $\R C_\delta$ of $C$ on $\pr^2 (\R )$.}
	\label{RealPart}
\end{subfigure}\hfill
\begin{subfigure}[t]{0.3\linewidth}
\centering
\includegraphics[width=\textwidth]{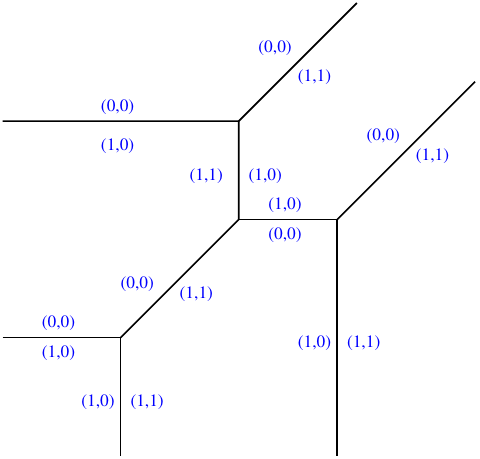}
\caption{Real phase structure on $C$.}
\label{RealPhase}
\end{subfigure}
\caption{\Cref{ExCombPatch} and \Cref{ExCombPatch2}.}
\end{figure}

\begin{definition}
\label{DefTropLimit}
Let $\mathcal{C} := (\mathcal{C}_t)_t \subset (\K^\times )^2$ be a real algebraic curve with tropicalisation a non-singular tropical curve $C\subset \R^2$ and inducing the distribution of signs $\delta$ on the dual subdivision $\Delta_C$.
We say that for $t>0$ small enough (in the sense of \Cref{ThViro}), the curve $\mathcal{C}_t \subset (\C^\times )^2$ is \emph{near the tropical limit} $(C,\delta )$.
\end{definition}

Given a distribution of signs $\delta$ on the dual subdivision $\Delta_C$ of a non-singular tropical curve $C$, \cite{renaudineau2017haas}, \cite{renaudineau2017tropical} and \cite{renaudineau2018bounding} define a real phase structure directly on $C$. 
We define this real phase structure as follows:

\begin{definition}
\label{DefRealPhase}
Let $C$ be a non-singular tropical curve in $\R^2$ or $\T \pr^2$.
For $\delta$ a distribution of signs on the dual subdivision $\Delta_C$, a \emph{real phase structure} on $C$ is a collection $\E := \{ \E_e \}_{e}$, for $e$ running through the edges of $C$, of $\Z_2$-affine spaces defined as
\[ \E_e = \{ \varepsilon \in \mathcal{R}^2 ~ | ~ \varepsilon (e ) \subset \R e_\delta   \} .  \]
The couple $(C,\E )$ is called a non-singular \emph{real tropical curve}.
A real algebraic curve $\mathcal{C}$ in $(\K^\times )^2$ or $\pr^2_\K$ with tropicalisation $C$ and such that the distribution of signs $\delta_{\mathcal{P}}$ of a defining polynomial $\mathcal{P}$ of $\mathcal{C}$ induces the real phase structure $\E$ is called a \emph{realisation} of $(C,\E)$.
\end{definition}

\begin{remark}
\label{RkRealPhase}
If $C$ is a non-singular tropical curve, we can define directly a real phase structure $\E$ on $C$ without knowing the distribution of signs on the dual subdivision, as described in \cite[Definition 3.1]{renaudineau2018bounding}.
Such a real phase structure $\E := \{ \E_e \}_e$ must satisfy the two following properties:
\begin{itemize}
\item for $e$ an edge of $C$ and $\overrightarrow{e} \in \Z_2^2$ its direction modulo 2, we have $\varepsilon + \varepsilon ' = \overrightarrow{e}$ for $\varepsilon , \varepsilon '$ the two elements of $\E_e$ (using the identification $\mathcal{R}^2 \cong \Z_2^2$);
\item for $v$ a 3-valent vertex of $C$ incident to an edge $e$, for any element $\varepsilon \in \E_e$, there exists a unique edge $e' \neq e$ such that $v$ is incident to $e'$ and $\varepsilon \in \E_{e '}$ (see for instance \Cref{amoebavertex}).
\end{itemize}  
\end{remark}

\begin{example}
\label{ExCombPatch2}
Let $C$ be a non-singular tropical curve of degree 2 in $\T \pr^2$ as pictured in \Cref{RealPhase}.
For $\delta$ the distribution of signs on $\Delta_C$ represented in \Cref{SignDistrib}, we can recover the corresponding real phase structure $\E$ on $C$, pictured in color in \Cref{RealPhase}, by considering for each edge $e$ of $C$ its ``non-empty" symmetric copies $\varepsilon (e) , \varepsilon ' (e)$ (see \Cref{RealPart}).
Notice that the real phase structure $\E$ can be constructed directly using \Cref{RkRealPhase}, and we recover (up to multiplication by $-1$ on all integer points of $\Delta_C \cap \Z^2$) the distribution of signs $\delta$ from the data of edges $e$ with $(0,0) \in \E_e$.
\end{example}

\begin{figure}
\centering
\begin{subfigure}[t]{0.3\linewidth}
\centering
	\includegraphics[width=\textwidth]{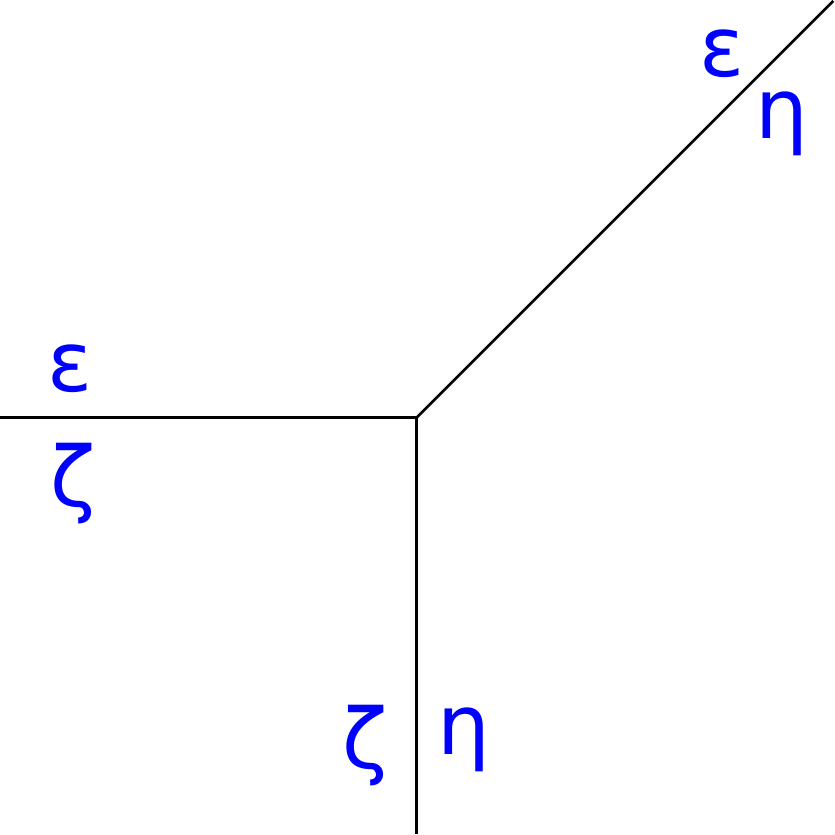}
	\caption{Real phase structure around a vertex.}
	\label{amoebavertex}
\end{subfigure}\hfill
\begin{subfigure}[t]{0.3\linewidth}
	\centering
	\includegraphics[width=\textwidth]{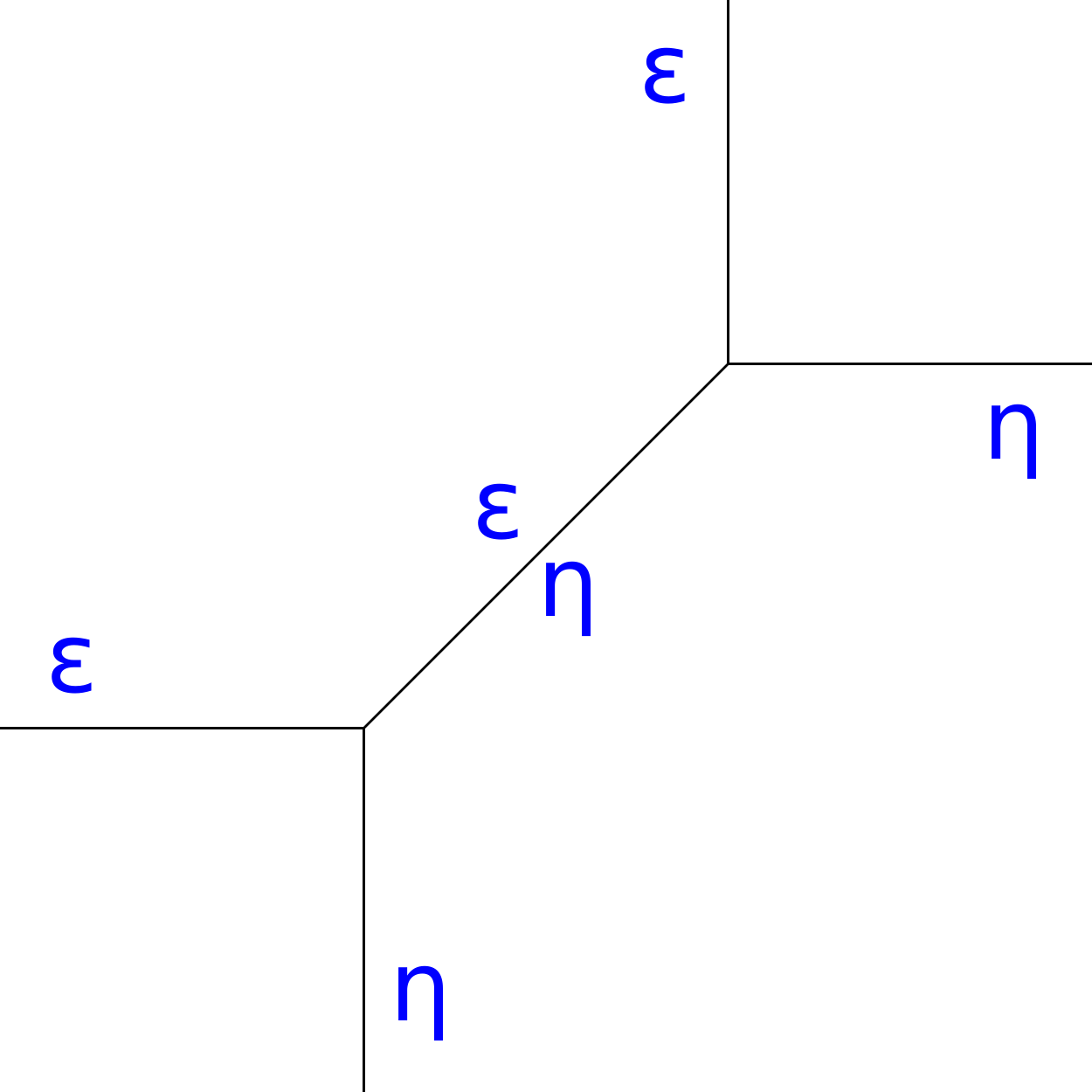}
	\caption{Real phase structure around a non-twisted bounded edge.}
	\label{amoebaedge1}
\end{subfigure}\hfill
\begin{subfigure}[t]{0.3\linewidth}
	\centering
	\includegraphics[width=\textwidth]{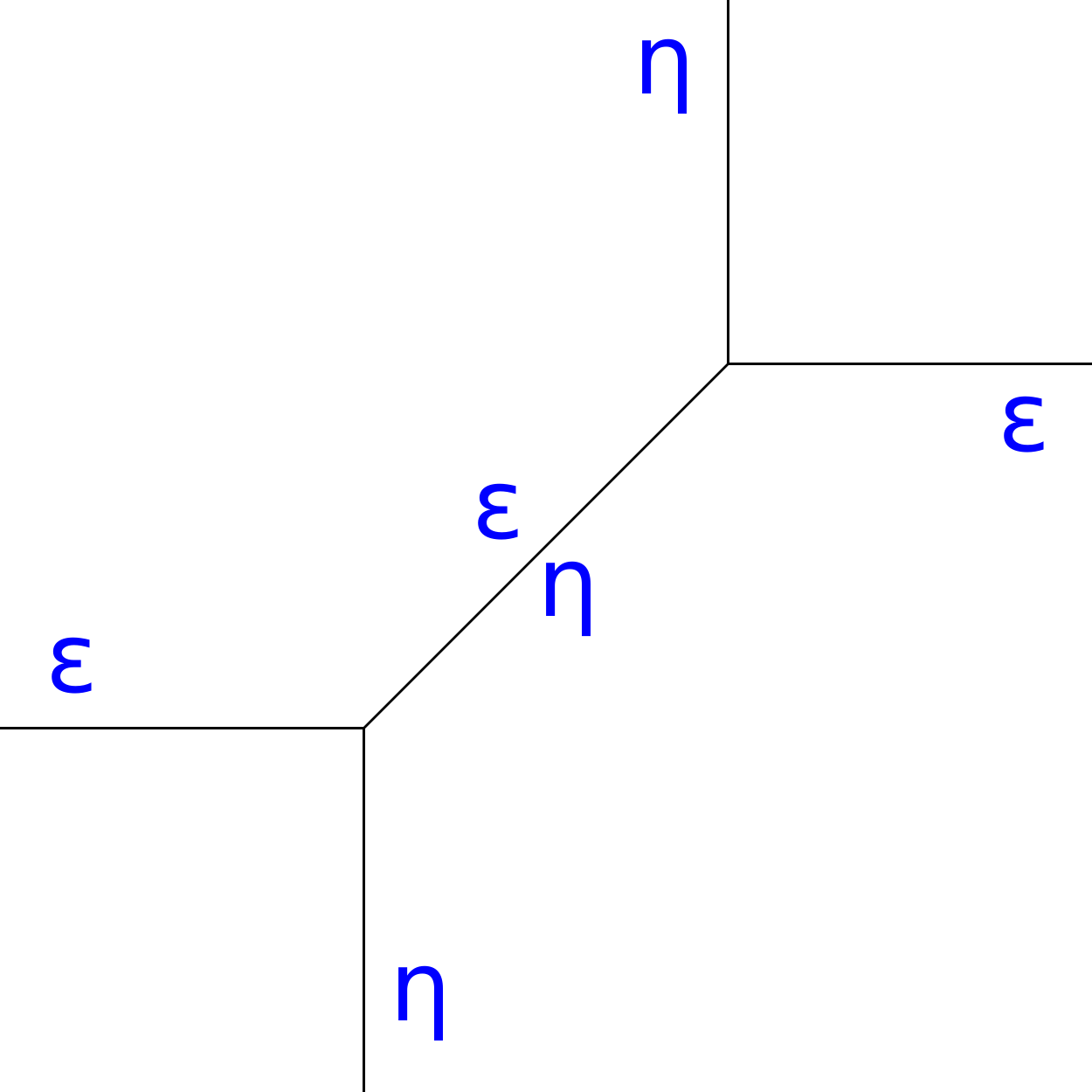}
	\caption{Real phase structure around a twisted bounded edge.}
	\label{amoebaedge2}
\end{subfigure}
\caption{Properties of real phase structure from \Cref{RkRealPhase} and \Cref{DefTwist}.}
\end{figure}

\begin{notation}
\label{NotationRealPhase}
From now on, we will write equivalently $\R C_\E$ or $\R C_\delta$ for the real part of a non-singular tropical curve $C$ with real phase structure $\E$ or sign distribution $\delta$, see \cite[Lemma 1]{renaudineau2017tropical} and \cite[Remark 3.8]{renaudineau2018bounding} for details of the connection between distribution of signs and real phase structure as defined in \Cref{RkRealPhase}.
Similarly, we will say that a real curve is \emph{near the tropical limit} $(C,\E )$ if $\delta$ determines the real phase structure $\E$.
\end{notation}

\subsection{Twisted edges on a tropical curve}

\begin{definition}
\label{DefTwist}
Let $(C,\E )$ be a non-singular real tropical curve.
A bounded edge $e$ of $C$ is said to be \emph{twisted} if for any $\varepsilon \in \E_e$, the two edges $e_1 , e_2$ of $C$ adjacent to $e$ such that $\varepsilon \in \E_{e_1} , \varepsilon \in \E_{e_1}$ lie on distinct sides of the affine line containing $e$ (see for instance \Cref{amoebaedge2}).
Otherwise, the edge $e$ is said to be \emph{non-twisted}, and satisfies the fact for any $\varepsilon \in \E_e$, the two edges $e_1 , e_2$ of $C$ adjacent to $e$ such that $\varepsilon \in \E_{e_1} , \varepsilon \in \E_{e_2}$ lie on the same side of the affine line containing $e$ that (see for instance \Cref{amoebaedge1}). 
\end{definition}  

We can define the twisted edges on a non-singular tropical curve in terms of \emph{amoebas}, see for instance \cite[Section 3.2]{brugalle2015brief} and \cite[Section 4.A]{renaudineau2017haas}. 
For an edge $e$ of $C$, we denote by $\overrightarrow{e} \in \Z^2_2$ the direction modulo 2 of $e$.
Not any subset $T$ of $\Edge^0 (C)$ may arise as the set of twisted edges. 
The possible configurations of twists must satisfy the following condition. 

\begin{definition}[\cite{brugalle2015brief},\cite{renaudineau2017haas}]
Let $C$ be a non-singular tropical curve.
Let $T$ be a subset of $\Edge^0 (C)$.
We say that $T$ is \emph{admissible}, or \emph{twist-admissible}, if for any cycle $\gamma$ of $C$, we have 
\begin{equation}
\label{EqAdm}
\sum_{e \in \gamma \cap T} \overrightarrow{e} = \overrightarrow{0} \mod 2 .
\end{equation}
\end{definition}
%

We can recover the set of twisted edges of a non-singular real tropical curve $(C,\E)$ from any distribution of signs $\delta$ determining $\E$.
Let $C$ be a non-singular tropical curve with Newton polygon $\Delta$. 
For each bounded edge $e$ of $C$, denote by $v_1^e$ and $v_2^e$ the two vertices of the edge $e^\vee$ dual to $e$ in the dual subdivision $\Delta_C$. 
The edge $e^\vee$ is contained in exactly two 2-dimensional simplices of $\Delta_C$.
Let $v_3^e$ and $v_4^e$ denote the vertices of these two simplices which are distinct from $v_1^e, v_2^e$.

\begin{proposition}[\cite{brugalle2015brief}]
\label{PropTwistSign}
Let $\delta$ be a distribution of signs on the dual subdivision $\Delta_C$, and let $e$ be a bounded edge of $C$.
\begin{itemize}
\item If the coordinates modulo 2 of $v_3^e$ and $v_4^e$ are distinct, then $e$ is twisted if and only if $\delta (v_1^e) \delta (v_2^e) \delta (v_3^e) \delta (v_4^e) = +1$; 
\item if the coordinates modulo 2 of $v_3^e$ and $v_4^e$ coincide, then $e$ is twisted if and only if $\delta (v_3^e) \delta (v_4^e) = -1$. 
\end{itemize}    
\end{proposition}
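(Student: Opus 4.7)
The plan is to read off the twist directly from the patchworking definition, using the characterisation that $\varepsilon \in \E_f$ iff the two endpoints of the dual edge $f^\vee \subset \Delta_C$ receive opposite extended signs, i.e.\ $\delta(\varepsilon(v)) \neq \delta(\varepsilon(w))$. Fix $e$ with endpoints $p_1, p_2$, so that the triangles of $\Delta_C$ dual to $p_1, p_2$ are $T_1 = [v_1^e, v_2^e, v_3^e]$ and $T_2 = [v_1^e, v_2^e, v_4^e]$. Denote by $a, b$ the two edges of $C$ adjacent to $e$ at $p_1$ with $a^\vee = [v_2^e, v_3^e]$ and $b^\vee = [v_1^e, v_3^e]$, and analogously $c, d$ at $p_2$ with $c^\vee = [v_2^e, v_4^e]$ and $d^\vee = [v_1^e, v_4^e]$.

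The first step is geometric: I would show that the edges $a, c$ (whose duals contain $v_2^e$) lie on one side of the affine line through $e$, while $b, d$ (whose duals contain $v_1^e$) lie on the other. This follows from tropical duality---each edge of $C$ is perpendicular to its dual edge of $\Delta_C$ and points outward from the dual triangle---together with an elementary computation of outward normals; working in coordinates where $v_1^e, v_2^e$ lie on a vertical axis makes this a one-line check. Under this dictionary, \Cref{DefTwist} says that $e$ is twisted iff for every $\varepsilon \in \E_e$, the pair $(e_1^\varepsilon, e_2^\varepsilon) \in \{a, b\} \times \{c, d\}$ of adjacent edges whose phase contains $\varepsilon$ lies in $\{(a, d), (b, c)\}$.

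Now I would translate this to signs. From $\varepsilon \in \E_e$ one has $\delta(\varepsilon(v_1^e)) \delta(\varepsilon(v_2^e)) = -1$; combining with $\varepsilon \in \E_a$ forces $\delta(\varepsilon(v_3^e)) = \delta(\varepsilon(v_1^e))$, and with $\varepsilon \in \E_b$ forces $\delta(\varepsilon(v_3^e)) = \delta(\varepsilon(v_2^e))$, and symmetrically for $c, d$. A quick inspection of the four configurations yields
\[ e \text{ is twisted} \iff \delta(\varepsilon(v_3^e)) \delta(\varepsilon(v_4^e)) = -1, \]
and applying the extension rule $\delta(\varepsilon(v)) = (-1)^{\varepsilon \cdot v} \delta(v)$ converts this to $(-1)^{\varepsilon \cdot (v_3^e + v_4^e)} \delta(v_3^e) \delta(v_4^e) = -1$.

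If $v_3^e \equiv v_4^e \pmod{2}$, the sign factor is $+1$ and the second bullet drops out. Otherwise, unimodularity of $T_1$ and $T_2$ forces $v_4^e - v_1^e = -(v_3^e - v_1^e) + \beta(v_2^e - v_1^e)$ for some $\beta \in \Z$, so $v_3^e + v_4^e \equiv \beta(v_2^e - v_1^e) \pmod{2}$; the parity hypothesis on $v_3^e, v_4^e$ forces $\beta$ odd, hence $v_3^e + v_4^e \equiv v_1^e + v_2^e \pmod{2}$. Combined with the constraint $\varepsilon \in \E_e$, which via the same extension rule gives $(-1)^{\varepsilon \cdot (v_1^e + v_2^e)} = -\delta(v_1^e) \delta(v_2^e)$, this yields $\delta(v_1^e) \delta(v_2^e) \delta(v_3^e) \delta(v_4^e) = +1$, exactly the first bullet. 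The main subtlety is the geometric identification of sides at the start; once that is secured, the rest is pure sign bookkeeping, and the well-definedness of the twist condition across the two elements of $\E_e$ is automatic from the parity analysis of $v_3^e + v_4^e$.
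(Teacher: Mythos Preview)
The paper does not supply its own proof of this proposition: it is stated with a citation to \cite{brugalle2015brief} and used as a black box. So there is nothing to compare against at the level of argument.

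Your proof is correct. The geometric claim that $a,c$ lie on one side of the affine line through $e$ and $b,d$ on the other is the only step that deserves a word more than you give it; here is a clean way to see it. By balancing at $p_1$, the directions of $a$ and $b$ have opposite components normal to $e$, so they lie on opposite sides, and likewise for $c,d$ at $p_2$. To match the pairs, note that the direction of an edge of $C$ at a trivalent vertex is the outward normal to the corresponding side of the dual triangle; for the side $[v_2^e,v_3^e]$ opposite $v_1^e$, ``outward'' means positive inner product with $v_2^e-v_1^e$, and this vector is precisely the normal to the line through $e$. Hence both $a$ and $c$ (whose dual sides are opposite $v_1^e$ in their respective triangles) project positively onto $v_2^e-v_1^e$, and $b,d$ negatively. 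With this in hand, your sign bookkeeping is exactly right: the reduction to $\delta(\varepsilon(v_3^e))\delta(\varepsilon(v_4^e))=-1$ is immediate, and the unimodularity argument giving $v_3^e+v_4^e\equiv v_1^e+v_2^e \pmod 2$ in the ``distinct parities'' case (via $\alpha=-1$ and $\beta$ odd) is the correct way to finish.
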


Note that we can recover the twist-admissibility condition from \Cref{EqAdm} using \Cref{PropTwistSign}.

\section{Intersection of real tropical curves}
\label{SectionIntersection}

We recall several definitions and propositions from  \cite{richter2005first} and \cite{brugalle2012inflection} concerning the intersection of tropical curves, and give some new results about the real intersection points of real algebraic curves near the tropical limit.

\subsection{Transverse real intersection}
\label{SecTransverse}
%

\begin{definition}[\cite{richter2005first}]
\label{DefTransverse}
Let $C,C'$ be two tropical curves in $\R^2$. 
A \emph{tranverse} intersection point of $C$ and $C'$ is an isolated point of the set-theoretic intersection $C\cap C'$ lying in the interior of both a closed edge of $C$ and a closed edge of $C'$.
\end{definition}

\begin{definition}[\cite{brugalle2012inflection}]
\label{DefTropIntMult}
Let $C,C'$ be two tropical curves in $\R^2$, and let $v$ be a vertex of the set-theoretic intersection $C\cap C'$.
We call $v$ a \emph{tropical intersection point}.
The \emph{tropical intersection multiplicity} $(C \circ C')_v$ of $C$ and $C'$ at $v$ is defined as
\[ (C \circ C')_v := \Area (v^\vee ) - \xi_v , \]
for $\Area (\bullet )$ the Euclidean area, for $v^\vee \subset \Delta_{C\cup C'}$ the 2-dimensional face dual to $v$ in the \emph{mixed dual subdivision} associated to the union $C\cup C'$, and $\xi_v$ satisfying:
\begin{itemize}
\item if $v$ is a transverse intersection point of $C$ and $C'$, then $\xi_v = 0$;
\item if $v$ is a vertex $v'$ of $C$ but not $C'$ (or $C'$ but not $C$), then $\xi_v = \Area ((v')^\vee)$ for $(v')^\vee \subset \Delta_C$ (or $(v')^\vee \subset \Delta_{C'}$);
\item if $v$ is both a vertex $v'$ of $C$ and a vertex $v''$ of $C'$, then $\xi_v = \Area ((v')^\vee) + \Area ((v'')^\vee)$ for $(v')^\vee \subset \Delta_C$ and $(v'')^\vee \subset \Delta_{C'}$. 
\end{itemize}  
\end{definition}

In the literature, tropical intersections points are also called \emph{stable intersections points}, see \cite{richter2005first} for a justification of this terminology.
Note that if $v$ is a transverse intersection point of $C\cap C'$, its tropical intersection multiplicity $(C\circ C')_v$ can be computed as $|\det (\overrightarrow{e} | \overrightarrow{e} ')| . w(e) . w (e')$, for $e, e'$ the edges of $C,C'$ containing $v$ and $\overrightarrow{e} , \overrightarrow{e} '$ their primitive integer directions.

\begin{definition}
\label{DefSumTropIntMult}
Let $C, C'$ be two tropical curves in $\R^2$.
Let $E$ be a connected component of the set-theoretic intersection $C\cap C'$.
The \emph{tropical intersection multiplicity} $(C \circ C')_E$ of $C$ and $C'$ at $E$ is defined as 
\[ (C \circ C')_E = \sum\limits_{v} (C \circ C')_v ,  \]
for $v$ running through the tropical intersection points of $C\cap C'$ contained in $E$.
\end{definition}

\begin{proposition}[\cite{brugalle2012inflection}]
\label{PropCpxMult}
Let $\mathcal{C}$ and $\mathcal{C}'$ be two algebraic curves in $(\K^\times )^2$, with tropicalisation $C,C'$ respectively, such that $\mathcal{C}$ and $\mathcal{C}'$ intersect in a finite number of points.
Let $E$ be a connected component of $C\cap C'$.
Then the number of intersection points (counted with multiplicity) of $\mathcal{C}$ and $\mathcal{C}'$ with tropicalisation in $E$ is less or equal to $(C \circ C')_E$, with equality if $E$ is compact.
\end{proposition}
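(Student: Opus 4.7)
The plan is to reduce the global statement to a local computation at each isolated vertex of the set-theoretic intersection $C\cap C'$, then extract the count of algebraic intersection points by a Kapranov--Bernstein--Kushnirenko argument applied to a suitable initial system. First, by \Cref{ThKapMik} any point of $\mathcal{C}\cap \mathcal{C}'$ tropicalises into $C\cap C'$, so it suffices to bound separately, for each tropical intersection point $v$ of $C\cap C'$ lying in $E$, the number of points of $\mathcal{C}\cap \mathcal{C}'$ tropicalising to $v$, and then sum over the finitely many such $v\in E$.

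Next, I would fix such a $v$ and pass to the initial system. Let $\mathcal{P}$ and $\mathcal{P}'$ be defining polynomials of $\mathcal{C}$ and $\mathcal{C}'$ in $\K [z,w]$. After translating by $(v_1,v_2)$ (substituting $t^{-v_1}z$, $t^{-v_2}w$) and specialising the coefficients of lowest $t$-order, one obtains initial polynomials $\operatorname{in}_v(\mathcal{P})$ and $\operatorname{in}_v(\mathcal{P}')$ in $\C [z,w]$ whose monomials are exactly those achieving the maximum in the definitions of $P(v)$ and $P'(v)$. Their Newton polytopes are precisely the cells $F_v \subset \Delta_C$ and $F_v' \subset \Delta_{C'}$ dual to the cells of $C$ and $C'$ containing $v$, and their Minkowski sum is the face $v^\vee$ of the mixed subdivision associated to $C\cup C'$. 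A standard consequence of \Cref{ThKapMik} is that, for $t>0$ small enough, the points of $\mathcal{C}_t\cap \mathcal{C}_t'$ tropicalising to $v$ are in a multiplicity-preserving bijection with the solutions in $(\C^\times)^2$ of the system $\operatorname{in}_v(\mathcal{P}) = \operatorname{in}_v(\mathcal{P}') = 0$.

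Third, I would invoke the Bernstein--Kushnirenko theorem: the number of solutions in $(\C^\times)^2$ of a system of two Laurent polynomials, counted with multiplicity, is bounded by the mixed volume
\[ \operatorname{MV}(F_v, F_v') = \Area(F_v + F_v') - \Area(F_v) - \Area(F_v'). \]
Checking the three cases of \Cref{DefTropIntMult} separately (transverse point, vertex of only one of $C,C'$, or vertex of both), one sees directly that $\operatorname{MV}(F_v, F_v')$ coincides with $\Area(v^\vee)-\xi_v=(C\circ C')_v$, since $F_v+F_v'=v^\vee$ and $\xi_v$ was defined precisely to subtract $\Area(F_v)+\Area(F_v')$. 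Summing over $v\in E$ yields the inequality claimed.

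Finally, for equality when $E$ is compact, I would appeal to the refined form of Bernstein's theorem: equality holds as long as no root of the initial system is lost to the toric boundary, which for our pair happens only when the facial subsystems associated to proper faces of the polytopes admit solutions in $(\C^\times)^2$. Geometrically, such an escaping root would produce a branch of $\mathcal{C}\cap \mathcal{C}'$ whose tropicalisation runs out of $E$ along an unbounded direction through $v$; compactness of $E$ rules this out. The main obstacle, and the step I expect to be the most delicate, is tracking these boundary contributions coherently across the various vertices of $E$: one must argue that the potential losses at adjacent tropical intersection points along $E$ correspond to common facial subsystems shared across the mixed subdivision and cancel out under compactness, upgrading the sum of local Bernstein inequalities to a global equality.
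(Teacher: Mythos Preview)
This proposition is quoted from \cite{brugalle2012inflection} (their Proposition~3.11) and is not proved in the present paper, so there is no in-paper argument to compare against.

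Your strategy via initial forms and Bernstein--Kushnirenko is natural and works cleanly when $E$ is a single transverse point. But as written there is a genuine gap once $E$ is positive-dimensional. Your first reduction assumes that every point of $\mathcal C\cap\mathcal C'$ with tropicalisation in $E$ actually tropicalises to a \emph{vertex} of $C\cap C'$ (a tropical intersection point in the sense of \Cref{DefTropIntMult}). This fails in general: the paper itself exhibits, in the proof of \Cref{Th4.3.9} and via \cite[Corollary~4.7]{brugalle2012inflection}, intersection points tropicalising to interior points $(a,a)$ with $a\in(0,b)$ of an edge component. Relatedly, the ``multiplicity-preserving bijection'' you invoke between lifts over $v$ and torus solutions of $\operatorname{in}_v(\mathcal P)=\operatorname{in}_v(\mathcal P')=0$ requires that initial system to be zero-dimensional; when $C$ and $C'$ share an edge through $v$, the two initial forms acquire a common binomial factor along that edge, the system then defines a curve, and neither the bijection nor the Bernstein upper bound is available. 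So summing your local bounds over the vertices of $E$ does not yet control all points of $\mathcal C\cap\mathcal C'$ tropicalising into $E$, and the inequality is not established.

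The argument in \cite{brugalle2012inflection} sidesteps this by a perturbation: one translates one of the curves by a small generic vector so that the tropicalisations meet only transversely, applies the transverse count (where your Bernstein computation is exactly right), and then passes to the limit using continuity of intersection numbers and the definition of stable intersection; compactness of $E$ is precisely what prevents intersection points from escaping to infinity during the perturbation. Your final paragraph is reaching toward a global cancellation mechanism of this kind, but the vertex-by-vertex accounting you set up cannot by itself see the points that land between the vertices, so the perturbation (or an equivalent device controlling positive-dimensional initial loci) is not optional.
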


\begin{proposition}
\label{PropRealMult}
Let $\mathcal{C}$ and $\mathcal{C}'$ be two real algebraic curves in $(\K^\times )^2$, with tropicalisation $C,C'$ respectively, such that $\mathcal{C}$ and $\mathcal{C}'$ intersect in a finite number of points.
Let $E$ be a connected component of $C\cap C'$.
Then the number of real intersection points (counted with multiplicity) of $\mathcal{C}$ and $\mathcal{C}'$ with tropicalisation in $E$ is less or equal to $(C \circ C')_E$, with equality modulo 2 if $E$ is compact.
\end{proposition}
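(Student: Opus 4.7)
The plan is to deduce the statement from its complex analogue \Cref{PropCpxMult} via the action of complex conjugation on intersection points. Denote by $N_\R(E)$ and $N_\C(E)$ respectively the number of real and complex intersection points of $\mathcal{C}\cap \mathcal{C}'$ with tropicalisation in $E$, counted with multiplicity. The inequality $N_\R(E) \leq (C\circ C')_E$ is immediate: every real point of the intersection is in particular a complex point with the same multiplicity, so $N_\R(E) \leq N_\C(E) \leq (C\circ C')_E$ by \Cref{PropCpxMult}.

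For the equality modulo $2$ when $E$ is compact, I would introduce the complex conjugation $\conj : \K \to \K$ sending $\sum_r \alpha_r t^r \mapsto \sum_r \overline{\alpha_r}\, t^r$. This is a field automorphism whose fixed field is $\K_\R$, and since $\val$ depends only on the support of the series, $\val(\conj(z)) = \val(z)$; hence $\conj$ extends coordinate-wise to an involution on $(\K^\times)^2$ that preserves tropicalisation. Because $\mathcal{C}$ and $\mathcal{C}'$ are cut out by polynomials with coefficients in $\K_\R$, the finite set $\mathcal{C}\cap \mathcal{C}'$ is stable under $\conj$, and its fixed points are exactly the real intersection points.

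Next, I would observe that $\conj$ induces an isomorphism between the local rings of the scheme-theoretic intersection at $p$ and at $\conj(p)$, so the intersection multiplicities at $p$ and at $\conj(p)$ coincide. Consequently the non-real intersection points with tropicalisation in $E$ group into $\conj$-orbits of size $2$, each orbit contributing an even amount to the weighted count $N_\C(E) - N_\R(E)$. Since $E$ is compact, \Cref{PropCpxMult} gives $N_\C(E) = (C\circ C')_E$ exactly, which combined with the previous parity statement yields $N_\R(E) \equiv (C\circ C')_E \pmod{2}$.

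The main obstacle is purely bookkeeping: one must verify that the intersection multiplicity is conjugation-equivariant at each pair of conjugate points so that such a pair jointly contributes an even weighted count. This follows from the fact that the defining ideal of $\mathcal{C}\cup \mathcal{C}'$ is $\conj$-invariant, so $\conj$ induces an automorphism of the $0$-dimensional intersection scheme matching the multiplicities at $p$ and $\conj(p)$.
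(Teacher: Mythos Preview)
Your proof is correct and follows essentially the same route as the paper: bound by \Cref{PropCpxMult}, then use that complex conjugation preserves the tropicalisation so non-real intersection points with tropicalisation in $E$ come in conjugate pairs, giving the parity statement in the compact case. You are simply more explicit than the paper about why conjugate points carry equal multiplicity.
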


\begin{proof}
By \Cref{PropCpxMult}, the number of intersection points of $\mathcal{C}$ and $\mathcal{C}'$ with tropicalisation in $E$ is at most $(C \circ C')_E$, counted with multiplicity.
Moreover, we have equality if $E$ is compact.
Hence the number of real intersection points with tropicalisation in $E$ is at most $(C \circ C')_E$.
Assume that $E$ is compact.
Any pair $(p , \bar{p})$ of complex conjugated points satisfies $\Trop (p) = \Trop (\bar{p})$. 
In particular, the point $p$ has tropicalisation in $E$ if and only if its complex conjugated $\bar{p}$ has tropicalisation in $E$.
Since the intersection of the real curves $\mathcal{C}$ and $\mathcal{C}'$ consists of a finite number of points, we can split those points into the real ones and the pairs of complex conjugated ones.
Then for $E$ compact, the number of real points with tropicalisation in $E$ is equal to $(C \circ C')_E$ modulo 2.
\end{proof}

From now on, we will consider intersection of non-singular real tropical curves.  

\begin{proposition}
\label{CorTransMult2}
Let $(C,\E )$ and $(C' ,\E ')$ be two non-singular real tropical curves in $\R^2$, such that there exists a transverse intersection point $u$ in $C\cap C'$ of tropical intersection multiplicity $(C\circ C')_u= m\geq 1$.
Let $e$ be the edge of $C$ containing $u$, and let $e'$ be the edge of $C'$ containing $u$.
For every realisation $\mathcal{C}$ and $\mathcal{C}'$ in $(\K^\times )^2$ of $(C,\E )$ and $(C' ,\E ')$, there are exactly $m$ distinct points $p_1 , \ldots , p_m \in (\K^\times )^2$ with $\Trop (p_i) = u$ in the intersection $\mathcal{C} \cap \mathcal{C'}$, partitioned as follows,
\begin{itemize}
\item if $m$ is even, the set $\{ p_1 , \ldots , p_m \}$ consists of 2 real points and $\frac{m-2}{2}$ pairs of complex conjugated points if and only if $\E_e = \E_{e'}'$, and it consists of $\frac{m}{2}$ pairs of complex conjugated points if and only if $\E_e \neq \E_{e'}'$;
\item if $m$ is odd, the set $\{ p_1 , \ldots , p_m \}$ consists of 1 real point and $\frac{m-1}{2}$ pairs of complex conjugated points.
\end{itemize}
\end{proposition}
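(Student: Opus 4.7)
The plan is to reduce the counting problem to the intersection of two binomial equations obtained by truncating the defining polynomials $\mathcal{P}, \mathcal{P}'$ of $\mathcal{C}, \mathcal{C}'$ to their leading monomials near $u$. Since $u$ lies in the interior of the closed edges $e$ and $e'$, exactly two monomials of $\mathcal{P}$ (resp.\ $\mathcal{P}'$) dominate at $u$, namely those indexed by the two endpoints $v_1, v_2$ of the dual edge $e^\vee \subset \Delta_C$ (resp.\ $v_1', v_2'$ of $(e')^\vee$). After the monomial change of variables $z_i = t^{-u_i} w_i$ and division by the common leading $t$-valuation, the system $\mathcal{P} = \mathcal{P}' = 0$ specialises at $t = 0$ to
\[
w^{\alpha} = c, \qquad w^{\alpha'} = c',
\]
with $\alpha = v_2 - v_1$ and $\alpha' = v_2' - v_1'$ primitive integer vectors (since $C$ and $C'$ are non-singular) whose $90^\circ$-rotations are $\overrightarrow{e}, \overrightarrow{e}'$, and with $c, c' \in \R^\times$ determined by the signs and absolute values of the leading coefficients. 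Taking $\log|\cdot|$ of the magnitude conditions yields a linear system with determinant $\pm m \neq 0$, so this binomial system admits exactly $m = |\det(\alpha|\alpha')| = |\det(\overrightarrow{e}|\overrightarrow{e}')|$ distinct solutions in $(\C^\times)^2$, all with nonvanishing Jacobian determinant.

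I would next classify these $m$ solutions by signs. A binomial solution is real iff its sign vector $\varepsilon \in \Z_2^2$ satisfies both $(-1)^{\varepsilon \cdot \alpha} = \sign(c)$ and $(-1)^{\varepsilon \cdot \alpha'} = \sign(c')$; by the extension rule for the sign distribution on $\Delta_C^*$ together with \Cref{DefRealPhase}, these two conditions read exactly as $\varepsilon \in \E_e$ and $\varepsilon \in \E_{e'}'$, so the number of real solutions equals $|\E_e \cap \E_{e'}'|$. Since $\E_e$ and $\E_{e'}'$ are $\Z_2$-affine lines of directions $\overrightarrow{e} \bmod 2$ and $\overrightarrow{e}' \bmod 2$, I would split into two cases. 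If the directions differ mod $2$, the lines meet transversely in exactly one point, $\det(\overrightarrow{e}|\overrightarrow{e}')$ is odd, so $m$ is odd, and we obtain one real point together with $(m-1)/2$ conjugate pairs. If the directions coincide mod $2$, then $m$ is even and the two parallel affine lines either coincide (when $\E_e = \E_{e'}'$, yielding $2$ real points and $(m-2)/2$ pairs) or are disjoint (when $\E_e \neq \E_{e'}'$, yielding $m/2$ conjugate pairs and no real point), exactly matching the statement.

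The step I expect to be most delicate is justifying that the $m$ distinct solutions of the binomial model at $t = 0$ persist as $m$ distinct reduced solutions of the full system for $t > 0$ small, for \emph{every} realisation rather than merely a generic one. I would handle this by an implicit function theorem argument in the parameter $t$: the non-vanishing of the binomial Jacobian determinant ensures that each root of the specialised system extends uniquely to an analytic family of roots of the full system, and these $m$ families remain pairwise disjoint for small $t$. Combined with \Cref{PropCpxMult}, which caps the total intersection multiplicity near the compact component $\{u\}$ at exactly $m$, this shows that the full intersection $\mathcal{C} \cap \mathcal{C}'$ with tropicalisation $u$ consists of exactly $m$ distinct points accounted for by the sign analysis, independently of the choice of realisation.
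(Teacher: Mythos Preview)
Your argument is correct and shares the paper's overall strategy of reducing to the initial (binomial) system at $u$, but the execution differs in a way worth recording. The paper performs an explicit $GL_2(\Z)$ change of coordinates to put the two edges in the normal form $\overrightarrow{e}=(1,1)$, $\overrightarrow{e}'=(1,-m+1)$, substitutes one binomial into the other to obtain a single equation $z^m=\text{const}$, and then reads off the real root count from the sign of that constant, finally matching this sign with the condition $\E_e=\E_{e'}'$ via the distribution of signs. Your route stays coordinate-free: you analyse the binomial system $w^\alpha=c$, $w^{\alpha'}=c'$ directly, identify the number of real solutions with $|\E_e\cap\E_{e'}'|$ through the very definition of the real phase structure, and then invoke the affine-line structure of $\E_e,\E_{e'}'\subset\Z_2^2$ to conclude (one point if the directions differ mod $2$, hence $m$ odd; zero or two points if they agree, hence $m$ even). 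This is a cleaner and more intrinsic bookkeeping of the same information; it also makes transparent why the parity of $m$ governs the case split. For the lifting from $t=0$ to the full system, the paper simply invokes the setting of \cite[Lemma~3.13]{brugalle2012inflection}, whereas you spell out the implicit-function-theorem argument together with the multiplicity cap from \Cref{PropCpxMult}; both are valid, and your version is self-contained.
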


Note that Brugall\'{e} and L\'{o}pez de Medrano already proved that all the intersection points lifted from a transverse intersection point are of multiplicity 1, see \cite[Lemma 3.13]{brugalle2012inflection}.
Moreover, the condition $\E_e \neq \E_{e'}'$ in the case $m$ even is equivalent to $\E_e \cap \E_{e'}' = \emptyset$, since in that case the two edges have the same primitive integer direction modulo 2.

\begin{proof}[Proof of \Cref{CorTransMult2}]
We use a similar setting as the proof of \cite[Lemma 3.13]{brugalle2012inflection}.
After an affine linear transformation, we can assume that the edge $e$ has primitive integer direction $(1,1)$, the edge $e'$ has primitive integer direction $(1,-m+1)$, and those edges intersect at the point $(0,0) \in \R^2$.
Since the tropical curves are non-singular, the edges have all weight 1.
Then the local equations in $\K_\R [z,w]$ of $\mathcal{C}$ and $\mathcal{C}'$ are of the form
\begin{align}
\label{EqTrans1} z^k w^l (\delta_{k+1,l}z+ \delta_{k,l+1}w ) & = 0 \\
\label{EqTrans2} z^p w^q (\varepsilon_{p+m-1,q+1}z^{m-1}w+\varepsilon_{p,q}) & = 0
\end{align}
with the coefficients $\varepsilon_{i,j}, \delta_{i,j} \in \K_\R^\times$ of valuation 0.
Since we consider the solutions inside $(\K^\times )^2$, we obtain from \Cref{EqTrans1} that $w = -\frac{\delta_{k+1,l}}{\delta_{k,l+1}}z$. 
Again since we consider the solutions inside $(\K^\times )^2$, we can substitute $w = -\frac{\delta_{k+1,l}}{\delta_{k,l+1}}z$ in the second term of \Cref{EqTrans2} to obtain the following new equation in one variable:
\begin{align*}
\left( \frac{-\varepsilon_{p+m-1,q+1}\delta_{k+1,l}}{\delta_{k,l+1}}  \right) z^m  +  \varepsilon_{p,q} & = 0 \\
\Leftrightarrow z^m - \left( \frac{\varepsilon_{p,q} \delta_{k,l+1} }{\varepsilon_{p+m-1,q+1}\delta_{k+1,l}}  \right) & = 0 .
\end{align*}

The coefficient $\left( \frac{\varepsilon_{p,q} \delta_{k,l+1} }{\varepsilon_{p+m-1,q+1}\delta_{k+1,l}} \right)$ is non-zero, hence the polynomial above (seen in $\K_\R [z]$) has $m$ distinct roots of multiplicity 1 in $\K^\times$. 
Moreover, if $m$ is odd, we have exactly one real root and $\frac{m-1}{2}$ pairs of complex conjugated roots, while if $m$ is even, we have exactly two real roots if and only if $\left( \frac{\varepsilon_{p,q} \delta_{k,l+1} }{\varepsilon_{p+m-1,q+1}\delta_{k+1,l}} \right) > 0$ and no real roots if and only if $\left( \frac{\varepsilon_{p,q} \delta_{k,l+1} }{\varepsilon_{p+m-1,q+1}\delta_{k+1,l}} \right) < 0$.
For $m$ even, seeing the signs of the coefficients $\varepsilon_{ij} , \delta_{kl}$ as the distributions of signs on the corresponding dual subdivisions of the local model, we obtain two real roots if and only if $\E_e = \E_{e'}'$ and no real roots if and only if $\E_e \neq \E_{e'}'$.
\end{proof}

\subsection{Twists and non-transverse real intersection}

\label{SecNonTransverse}

Let us now consider non-singular real tropical curves $(C,\E) , (C' , \E ')$ with a non-transverse connected component $e\subset C\cap C'$, where $e$ is a bounded edge of $C$ contained in the interior of a closed edge of $C'$ (see for example \Cref{nontransverse2}). 
Since the tropical curves $C$ and $C'$ are non-singular, we obtain that $(C\circ C')_e = 2$ (using the balancing condition and the fact that all edges have weight 1). 
By \Cref{PropCpxMult} and \Cref{PropRealMult}, every realisation of $(C,\E)$ and $(C' , \E ')$ intersect in two points (counted with multiplicity) with tropicalisation in $e$, which are either two distinct real points, two distinct complex conjugated points, or a multiplicity 2 real point with tropicalisation in $e$.
Note that we cannot obtain a multiplicity 2 non-real intersection point $p$, as this would imply that its complex conjugated $\overline{p}$ is also a multiplicity 2 intersection point, hence by \Cref{PropCpxMult} we would obtain $(C\circ C')_e > 2$, leading to a contradiction.

\begin{theorem}[\Cref{IntroTh4.3.9}]
\label{Th4.3.9}
Let $(C,\E ) , (C' , \E ')$ be two non-singular real tropical curves in $\R^2$ such that there exists a closed bounded edge $e$ of $C$ contained in the interior of a closed edge $e'$ of $C'$ (see \Cref{nontransverse2}).
\begin{enumerate}
\item \label{Item4.3.9.1} If $\E_e \neq \E_{e '} '$, or is $\E_e = \E_{e'}'$ and $e$ is twisted, then all realisations of $(C,\E )$ and $(C' , \E ')$ intersect in two distinct real intersection points of multiplicity 1 with tropicalisation in $e$.
\item \label{Item4.3.9.2} If $\E_e = \E_{e '} '$ and $e$ is non-twisted, then a realisation of $(C,\E )$ can intersect a realisation of $(C' , \E ')$ in either two distinct real points of multiplicity 1 with tropicalisation in $e$, two distinct complex conjugated points of multiplicity 1 with tropicalisation in $e$, or a multiplicity 2 real point with tropicalisation in $e$.
Moreover, there exist infinitely many realisations satisfying the first two intersection types, and there exist exactly two pairs of realisations satisfying the third intersection type.
\end{enumerate}
\end{theorem}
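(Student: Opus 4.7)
The strategy is direct elimination followed by discriminant analysis, in the spirit of \Cref{CorTransMult2} but adapted to a one-dimensional intersection component. First I apply an affine transformation in $GL_2(\Z) \ltimes \Z^2$ to place $e$ along the $x$-axis with vertices $v_1 = (0,0)$ and $v_2 = (1,0)$, and (after an independent translation in $\Delta_{C'}$) arrange both $e^\vee$ and $(e')^\vee$ to coincide with the segment $[(0,0),(0,1)]$. The two triangles of $\Delta_C$ adjacent to $e^\vee$ then have third vertices $(-1,a)$ and $(1,b)$ for some integers $a,b$, and analogously $(-1,a')$, $(1,b')$ in $\Delta_{C'}$. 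In these coordinates the local defining polynomials take the form
\[ \mathcal{P} = c_{0,0} + c_{0,1} w + c_{-1,a} z^{-1} w^a + c_{1,b} z w^b + \cdots, \]
\[ \mathcal{P}' = c'_{0,0} + c'_{0,1} w + c'_{-1,a'} z^{-1} w^{a'} + c'_{1,b'} z w^{b'} + \cdots, \]
with $\val(c_{0,0}) = \val(c_{0,1}) = \val(c_{-1,a}) = 0$, $\val(c_{1,b}) = 1$, and $\val(c'_{0,0}) = \val(c'_{0,1}) = 0$, while $\val(c'_{-1,a'})$ and $\val(c'_{1,b'})$ are strictly greater than $1$ because $e' \supsetneq e$.

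Since the leading term of $\partial \mathcal{P}'/\partial w$ is $c'_{0,1}$ (nonzero), $\mathcal{C}'$ is smooth on a neighborhood of the interior of $e'$, so $\mathcal{P}'=0$ lets me express $w$ as a Puiseux series $w = g(z) = -c'_{0,0}/c'_{0,1} + O(t^{>0})$. Substituting into $z\cdot \mathcal{P}(z, g(z)) = 0$ and regrouping by powers of $z$ yields a quadratic
\[ \mathbf{P} z^2 + Q z + R = 0, \quad \mathbf{P}, Q, R \in \K_\R, \]
with $\mathbf{P} = c'_{0,1} c_{1,b}\, g(z)^b + \cdots$, $Q = \pm(c_{0,1} c'_{0,0} - c_{0,0} c'_{0,1})$, and $R = c'_{0,1} c_{-1,a}\, g(z)^a + \cdots$. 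Its two roots in $\K$ are exactly the intersection points with tropicalisation in $e$ guaranteed by \Cref{PropCpxMult}, and they are both real, both complex conjugate, or a multiplicity-two real point according to the sign of the discriminant $D = Q^2 - 4\mathbf{P}R \in \K_\R$.

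The key step is the sign analysis of $D$ via \Cref{PropTwistSign} and the correspondence between the signs of the leading real coefficients of the $c_{i,j}$ and the distribution of signs on $\Delta_C$. When $\E_e \neq \E'_{e'}$, the leading reals of $c_{0,0} c'_{0,1}$ and $c_{0,1} c'_{0,0}$ necessarily have opposite signs, forcing $\val(Q) = 0$; the leading term of $D$ is then the positive square of the leading coefficient of $Q$, so the two roots are real and distinct. When $\E_e = \E'_{e'}$, the case $\val(Q) > 0$ becomes possible, so I compare $\val(Q^2)$ with $\val(4\mathbf{P}R) = 1$: a direct computation gives the leading coefficient of $-4\mathbf{P}R$ at $t^1$ as $-4(\beta')^2 \gamma \delta g_0^{a+b}$ (with $\alpha', \beta', \gamma, \delta$ the leading real coefficients of $c'_{0,0}, c'_{0,1}, c_{-1,a}, c_{1,b}$ and $g_0 = -\alpha'/\beta'$), whose sign, by the parity analysis of \Cref{PropTwistSign}, is positive exactly when $e$ is twisted. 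Therefore in the twisted case the leading term of $D$ is positive in all three regimes $2\val(Q) < 1$, $2\val(Q) = 1$, and $2\val(Q) > 1$, giving two distinct real roots; this completes claim (1).

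For claim (2), when $\E_e = \E'_{e'}$ and $e$ is non-twisted, the leading coefficient of $-4\mathbf{P}R$ at $t^1$ is instead negative, and the three regimes for $2\val(Q)$ produce genuinely distinct outcomes: strictly less than $1$ gives $D > 0$ and two distinct real roots; strictly greater than $1$ gives $D < 0$ and two complex conjugate roots; equal to $1$ yields $D = (Q_{1/2}^2 - 4(\mathbf{P}R)_1)t + \cdots$, which is positive, negative, or zero at leading order according to the magnitude of $Q_{1/2}$. The first two outcomes are realised by infinitely many curves via free higher-order Puiseux coefficients; the degenerate case $D=0$ imposes $Q_{1/2}^2 = 4(\mathbf{P}R)_1$ with $(\mathbf{P}R)_1 > 0$ by non-twistedness, yielding exactly two sign choices $Q_{1/2} = \pm 2\sqrt{(\mathbf{P}R)_1}$ and hence the two pairs of realisations. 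The main technical subtlety I foresee is verifying at the outset that the monomials of $\mathcal{P}, \mathcal{P}'$ I have dropped (those not incident to $e^\vee$) contribute only to terms of strictly higher valuation than those retained in $\mathbf{P}, Q, R$, so that the quadratic above really captures the full leading-order intersection behaviour near $e$.
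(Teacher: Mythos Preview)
Your proposal follows essentially the same route as the paper: reduce to local coordinates, eliminate one variable via the equation of $\mathcal{C}'$, obtain a quadratic, and decide the sign of its discriminant using \Cref{PropTwistSign}. The case split and the valuation comparison $2\val(Q)$ versus $\val(4\mathbf{P}R)$ mirror the paper's comparison of $\val(B^2)$ with $\val(4At^b)$.

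Two points where the paper is cleaner and where your write-up should be tightened. First, because $e$ lies in the \emph{interior} of $e'$, the local defining polynomial of $\mathcal{C}'$ near $e$ is exactly the binomial $c'_{0,0}+c'_{0,1}w$ (in your coordinates); there is no need to carry the monomials $c'_{-1,a'}z^{-1}w^{a'}$ and $c'_{1,b'}zw^{b'}$ at all. The paper works with this binomial directly, so that $w=g(z)$ is an honest constant $g_0=-c'_{0,0}/c'_{0,1}$ and $z\cdot\mathcal{P}(z,g_0)$ is a genuine quadratic in $z$, not merely one ``at leading order''. This entirely removes the technical subtlety you flag at the end. Second, your claim that $\val(c'_{-1,a'})>1$ is not correct: if the left endpoint of $e'$ sits at $x=-s$ then $\val(c'_{-1,a'})=s$, which is only $>0$. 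This does not actually break the argument (the term is still subdominant on $e$), but it is worth fixing; again the issue disappears once you drop those terms.

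With those simplifications your discriminant analysis coincides with the paper's: $\E_e\neq\E'_{e'}$ forces $\val(Q)=0$ hence $D>0$; for $\E_e=\E'_{e'}$ the sign of the order-$1$ term of $-4\mathbf{P}R$ is read off from \Cref{PropTwistSign} exactly as you describe, giving $D>0$ unconditionally in the twisted case and the trichotomy in the non-twisted case.
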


\begin{figure}
\centering
\begin{subfigure}[t]{0.4\textwidth}
	\centering
	\includegraphics[width=\textwidth]{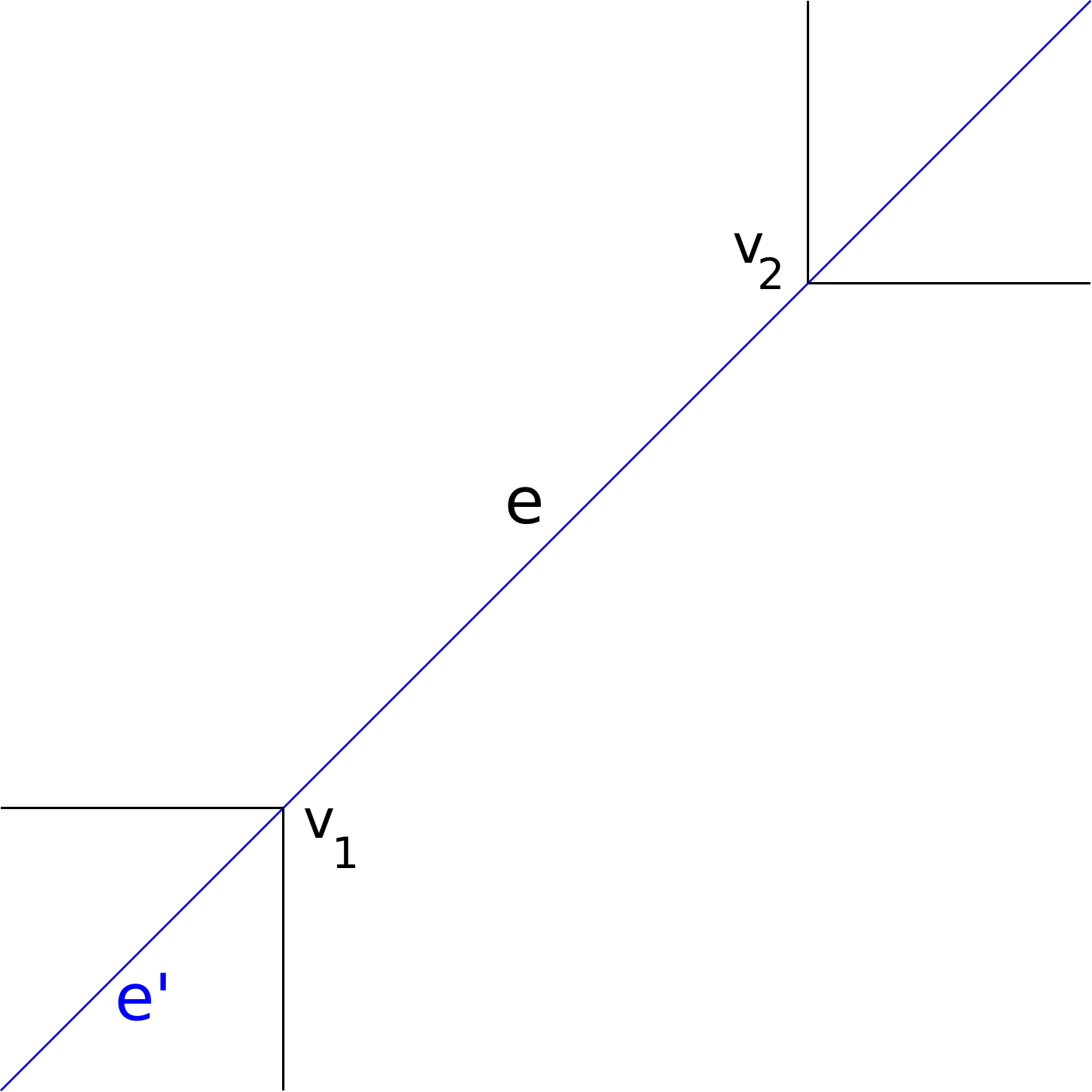}
	\caption{A non-transverse intersection of $C$ (in black) with $C'$ (in blue).}
	\label{nontransverse2}
\end{subfigure}\hfill
\begin{subfigure}[t]{0.4\textwidth}
	\centering
	\includegraphics[width=\textwidth]{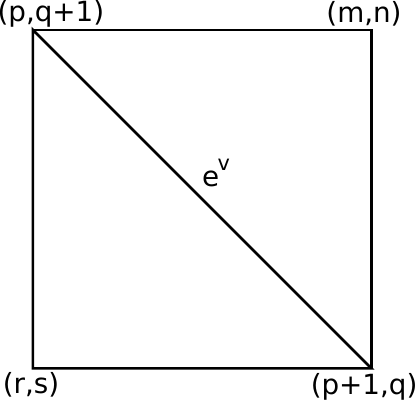}
	\caption{Local dual subdivision of $C$.}
	\label{FigDualNonTransverse2}
\end{subfigure}
\caption{Intersection in \Cref{Th4.3.9}.}
\label{FigNonTransverse2}
\end{figure}

\begin{proof}\renewcommand{\qedsymbol}{}

After an affine linear transformation, we can assume that the edges $e$ and $e'$ have primitive integer direction $(1,1)$ and that the vertices of $e$ are $v_1 = (0,0)$ and $v_2 = (b,b)$ with $b>0$.
Then the local equations of some realisations in $(\K^\times )^2$ of $(C,\E)$ and $(C' ,\E ')$ are of the form
\begin{align}
\label{EqTwist1} z^{k} w^l \left( \varepsilon_{k+1,l}z + \varepsilon_{k,l+1} w \right) & = 0 \\
\label{EqTwist2} \delta_{m,n}t^{b}z^m w^n + \delta_{p+1,q}z^{p+1} w^q + \delta_{p,q+1} z^{p} w^{q+1} + \delta_{r,s} z^r w^s & = 0
\end{align}
with $m+n = p+q+2=r+s+2$ and the coefficients $\varepsilon_{ij}, \delta_{ij} \in \K_\R^\times$ are of valuation $0$ (see \Cref{FigDualNonTransverse2}).
Note that the conditions on the exponents are implied by the fact that $C$ is non-singular.
In $(\K^\times )^2$, \Cref{EqTwist1} is equivalent to $w = -\frac{\varepsilon_{k+1,l}}{\varepsilon_{k,l+1}}z$.
After substituting $w = -\frac{\varepsilon_{k+1,l}}{\varepsilon_{k,l+1}}z$ in \Cref{EqTwist2}, we can factor the left-hand side by $z^{p+q}$, so that the solutions of the new equation in $\K^\times$ are the roots of the following degree 2 polynomial in $\K_\R [z]$:
\begin{align*}
& z^2 \left( \delta_{m,n} \left( \frac{-\varepsilon_{k+1,l}}{\varepsilon_{k,l+1}}\right)^n  t^{b} \right) \\ 
+ & z \left( \delta_{p+1,q}\left(  \frac{-\varepsilon_{k+1,l}}{\varepsilon_{k,l+1}} \right)^{q} + \delta_{p,q+1} \left(  \frac{-\varepsilon_{k+1,l}}{\varepsilon_{k,l+1}} \right)^{q+1} \right) \\
+ & \delta_{r,s} \left( \frac{-\varepsilon_{k+1,l}}{\varepsilon_{k,l+1}}\right)^s .
\end{align*}
By Tarski-Seidenberg's principle \cite[Theorem 1.4.2]{bochnak2013real} applied to the real closed field $\K_\R$, we can compute the discriminant in order to determine possible real roots.
The discriminant with respect to $z$ is $D = B^2 - 4 A t^{b}$, with 
\[ B:= \left( \delta_{p+1,q}\left(  \frac{-\varepsilon_{k+1,l}}{\varepsilon_{k,l+1}} \right)^{q} + \delta_{p,q+1} \left(  \frac{-\varepsilon_{k+1,l}}{\varepsilon_{k,l+1}} \right)^{q+1} \right) \]
and 
\[  A:= \delta_{m,n} \delta_{r,s} \left(  \frac{-\varepsilon_{k+1,l}}{\varepsilon_{k,l+1}} \right)^{n+s} . \]
Note that $\val (B) \geq 0$, and $\val (B) = 0$ unless the leading terms of the two summands cancel each other. 

We can now prove the two statements of the theorem.
\begin{enumerate}[label=(\Roman*)]
\item \label{ProofDisjoint} Assume that $\E_e \neq \E_{e'}'$ (first case of \Cref{Item4.3.9.1}).
Then the product \[ \delta_{p+1,q} \delta_{p,q+1}\varepsilon_{k+1,l}\varepsilon_{k,l+1} \] is negative, hence the leading coefficients of the two summands of $B$ do not cancel.
Thus $\val (B) = 0$, so the sign of the discriminant $D$ is determined by the sign of $B^2$, hence $D$ is positive.
Therefore, all realisations of $(C,\E)$ and $(C',\E ')$ intersect in two distinct real points with tropicalisation in $e$.

\item \label{ProofTwist} Assume that $\E_e = \E_{e'}'$ and $e$ is twisted (second case of \Cref{Item4.3.9.1}).
We have $n+s = 0 \mod 2$ if and only if $(m,n) = (r,s) \mod 2$.
Then by \Cref{PropTwistSign}, since $e$ is twisted, the discriminant $D$ is positive.
Therefore, all realisations of $(C,\E)$ and $(C',\E ')$ intersect in two distinct real points with tropicalisation in $e$.

\item \label{ProofNonTwist} Assume that $e$ is non-twisted and $\E_e = \E_{e'}'$.
Using \Cref{PropTwistSign}, we obtain the sign of the discriminant depending on the valuation of $B$ and the values of each summand of $D$ as follows.
The discriminant $D$ is positive if either $\val (B) < \frac{b}{2}$ or $\val (B) = \frac{b}{2}$ and $B^2 > 4 A t^{-b}$, and in that case all realisations of $(C,\E)$ and $(C',\E ')$ intersect in two distinct real points with tropicalisation in $e$.
Moreover, the condition $\val (B) < \frac{b}{2}$ is open, hence there exist infinitely many such realisations.
The discriminant $D$ is negative if either $\val (B) > \frac{b}{2}$ or $\val (B) = \frac{b}{2}$ and $B^2 < 4 A t^{-b}$, and in that case all realisations of $(C,\E)$ and $(C',\E ')$ intersect in two distinct complex conjugated points with tropicalisation in $e$.
Moreover, the condition $\val (B) > \frac{b}{2}$ is open, hence there exist infinitely many such realisations.
The discriminant $D$ is zero if $\val (B) = \frac{b}{2}$ and $B^2 = 4 A t^{-b}$, and in that case all realisations of $(C,\E)$ and $(C',\E ')$ intersect in a multiplicity 2 real point with tropicalisation in $e$.
Moreover, the condition $\val (B) = \frac{b}{2}$ is closed, hence the solution set is of dimension 0, and the discriminant $D$ is a degree 2 polynomial with coefficients in $\K_\R$, hence there exist exactly two pairs of realisations of $(C,\E)$ and $(C',\E ')$ intersecting in a multiplicity 2 real point with tropicalisation in $e$. \quad \quad \quad \quad $\blacksquare$ 
\end{enumerate} 
\end{proof}

We obtain from the proof of \Cref{Th4.3.9} the following corollaries.

\begin{corollary}
\label{CorDistinctPhaseTwist}
Let $(C,\E )$ and $(C' , \E ')$ be two non-singular real tropical curves in $\R^2$ such that there exists a closed bounded edge $e$ of $C$ contained in the interior of an edge $e'$ of $C'$.
Let $v_1$ and $v_2$ be the vertices of $e$.
Let $\mathcal{C}$ and $\mathcal{C'}$ be realisations in $(\K^\times )^2$ of $(C,\E )$ and $(C' , \E ')$.
If $\E_e \neq \E_{e'}'$, then for $p_1 , p_2 \in (\K_\R^\times )^2$ the two distinct real points of multiplicity 1 in $\mathcal{C} \cap \mathcal{C'}$ with tropicalisation in $e$, we have (up to renumbering) $\Trop (p_1) = v_1$ and $\Trop (p_2) = v_2$. 
\end{corollary}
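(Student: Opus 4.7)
The plan is to reuse the explicit quadratic in $\K_\R[z]$ constructed in the proof of \Cref{Th4.3.9} and to read off the valuations of its two real roots by a Newton polygon argument. After the affine linear transformation of that proof, the edge $e$ has primitive integer direction $(1,1)$ and vertices $v_1 = (0,0)$, $v_2 = (b,b)$ with $b > 0$. Every intersection point $p_i = (z_i, w_i) \in \mathcal{C} \cap \mathcal{C}'$ with tropicalisation in $e$ satisfies the linear relation $w_i = -(\varepsilon_{k+1,l}/\varepsilon_{k,l+1})\, z_i$ coming from the local equation of $\mathcal{C}$, and the coefficient here has valuation $0$, so $\val(w_i) = \val(z_i)$. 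Consequently $\Trop(p_i)$ already lies on the diagonal through $v_1$ and $v_2$, and it suffices to prove that the two real roots $z_1, z_2$ of the quadratic satisfy $\{\val(z_1), \val(z_2)\} = \{0, -b\}$.

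The quadratic written down in the proof of \Cref{Th4.3.9} has the form $a z^2 + B z + c$, where $a = \delta_{m,n}(-\varepsilon_{k+1,l}/\varepsilon_{k,l+1})^n\, t^{b}$ has valuation $b$ and $c = \delta_{r,s}(-\varepsilon_{k+1,l}/\varepsilon_{k,l+1})^s$ has valuation $0$. Under the hypothesis $\E_e \neq \E_{e'}'$, step (I) of that proof already established that the two summands comprising $B$ have non-cancelling leading coefficients, so $\val(B) = 0$. The lower convex hull of the three points $(0,0), (1,0), (2,b)$, obtained as $(i, \val(\text{coefficient of } z^i))$, consists of two edges of slopes $0$ and $b$. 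The standard Newton polygon theorem over the valued field $\K$ therefore yields $\{\val(z_1), \val(z_2)\} = \{0, -b\}$. Equivalently, Vieta's formulas give $\val(z_1 z_2) = \val(c/a) = -b$ and $\val(z_1 + z_2) = \val(-B/a) = -b$; the sum identity forces the minimum of the two valuations to be $-b$, and the product identity then forces the other valuation to be $0$. After relabelling, $\Trop(p_1) = (-\val(z_1), -\val(z_1)) = (0,0) = v_1$ and $\Trop(p_2) = (b,b) = v_2$, as desired.

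Since the heavy lifting (setting up the quadratic, and deriving $\val(B) = 0$ from $\E_e \neq \E_{e'}'$) is already carried out inside the proof of \Cref{Th4.3.9}, the only genuine content of the corollary is the Newton polygon computation, which is elementary; I do not anticipate a substantive obstacle. The one point of care is to confirm that the linear substitution used to eliminate $w$ preserves valuations, so that $\Trop(p_i)$ can legitimately be recovered from $\val(z_i)$ alone; this follows immediately from the fact that $\varepsilon_{k+1,l}, \varepsilon_{k,l+1} \in \K_\R^\times$ are of valuation $0$. A minor housekeeping point is that the affine linear transformation used to normalise $e$ preserves the tropicalisation map up to an affine $\mathrm{GL}(2,\Z)$-change of coordinates on $\R^2$, so reading the conclusion back in the original coordinates indeed places $\Trop(p_1)$ and $\Trop(p_2)$ at the two original endpoints of $e$.
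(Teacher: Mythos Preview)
Your proof is correct and follows essentially the same approach as the paper: both reduce to the explicit quadratic from the proof of \Cref{Th4.3.9} and determine the valuations of its two roots via a Newton polygon (equivalently, minimum-reached-twice) argument, using $\val(B)=0$ from step (I). The only minor difference is that the paper first invokes \cite[Corollary 4.7]{brugalle2012inflection} to get the symmetric form $\Trop(p_1)=(a,a)$, $\Trop(p_2)=(b-a,b-a)$ before the valuation argument, whereas your direct use of Vieta's formulas (or the Newton polygon) yields both valuations $\{0,-b\}$ at once and makes that citation unnecessary.
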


\begin{proof}
Recall \Cref{ProofDisjoint} from the proof of \Cref{Th4.3.9}.
By \cite[Corollary 4.7]{brugalle2012inflection}, the two points $p_1 , p_2$ have tropicalisations of the form $\Trop (p_1) = (a,a)$ and $\Trop (p_2) = (b-a , b-a)$, for $a \in [0,b]$.
Let $z_1 ' = z_1 t^{-a}$ and $z_2 ' = z_2 t^{-b+a}$ be the roots of the degree 2 polynomial described in the proof of \Cref{Th4.3.9}, such that $\val (z_1) = \val (z_2) = 0$.
Evaluating the polynomial in $z_1 '$, we obtain the following equation
\begin{align*}
& z_1^2 t^{b-2a} \left( \delta_{m,n} \left( \frac{-\varepsilon_{k+1,l}}{\varepsilon_{k,l+1}}\right)^n \right) \\ 
+ & z_1 t^{-a} \left( \delta_{p+1,q}\left(  \frac{-\varepsilon_{k+1,l}}{\varepsilon_{k,l+1}} \right)^{q} + \delta_{p,q+1} \left(  \frac{-\varepsilon_{k+1,l}}{\varepsilon_{k,l+1}} \right)^{q+1} \right) \\
+ & \delta_{r,s} \left( \frac{-\varepsilon_{k+1,l}}{\varepsilon_{k,l+1}}\right)^s = 0 .
\end{align*}
Since $\val (B)= 0$ in \Cref{ProofDisjoint}, the equation implies that the minimum in the set $\{ b-2a , -a , 0 \}$ must be reached at least twice.
This is possible if and only if either $a= 0$ or $a = b$.
Therefore, up to renumbering, the intersection points $p_1$ and $p_2$ have tropicalisation $\Trop (p_1) = (0,0) = v_1$ and $\Trop (p_2) = (b,b) = v_2$, the two vertices of the edge $e$.
\end{proof}

The following corollary follows from \Cref{Item4.3.9.2} of \Cref{Th4.3.9}, and will be useful for proving conditions for hyperbolicity.

\begin{corollary}
\label{CorNonTwist}
Let $(C,\E )$ and $(C' , \E ')$ be two non-singular real tropical curves in $\R^2$ such that there exists a closed bounded edge $e$ of $C$ contained in the interior of an edge $e'$ of $C'$.
If $e$ is non-twisted and $\E_e = \E_{e '} '$, there exist distinct realisations $\mathcal{C}_+ , \mathcal{C}_- \subset (\K^\times )^2$ of $(C,\E )$ and distinct realisations $\mathcal{C}_+ ' , \mathcal{C}_- ' \subset (\K^\times )^2$ of $(C' , \E ')$ such that the intersection $\mathcal{C}_+ \cap \mathcal{C}_+ '$ has two distinct real points of multiplicity 1 with tropicalisation in $e$, and the intersection $\mathcal{C}_{-} \cap \mathcal{C}_{-} '$ has two distinct complex conjugated points of multiplicity 1 with tropicalisation in $e$.
\end{corollary}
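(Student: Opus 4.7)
The plan is to obtain \Cref{CorNonTwist} as an essentially immediate consequence of item \ref{Item4.3.9.2} of \Cref{Th4.3.9}, which already asserts the existence of infinitely many realisations producing two distinct real intersection points of multiplicity 1 with tropicalisation in $e$, and infinitely many producing two distinct complex conjugated intersection points of multiplicity 1 with tropicalisation in $e$. The content of the corollary is precisely to extract one representative pair of realisations in each regime.

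To make the extraction concrete I would reuse the explicit local model set up in case \ref{ProofNonTwist} of the proof of \Cref{Th4.3.9}, in which realisations of $(C,\E)$ and $(C',\E')$ are parametrised (up to higher-order terms) by the Puiseux coefficients $\varepsilon_{ij}, \delta_{ij} \in \K_\R^\times$ of valuation $0$. Under the hypotheses $\E_e = \E_{e'}'$ and $e$ non-twisted, the intersection behaviour is controlled by the sign of the discriminant $D = B^2 - 4At^b$ of the induced degree-$2$ polynomial in $\K_\R[z]$, and the real phase structures only constrain the signs of the leading coefficients while leaving the higher-order terms free.

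To produce $\mathcal{C}_+, \mathcal{C}_+'$, I would choose the higher-order terms of $\varepsilon_{k+1,l}, \varepsilon_{k,l+1}, \delta_{p+1,q}, \delta_{p,q+1}$ so that no cancellation occurs beyond the forced one in the two summands of $B$, hence $\val(B) < b/2$, yielding $D > 0$ and therefore two distinct real intersection points. Since this is an open condition on the higher-order terms it is plainly realisable. To produce $\mathcal{C}_-, \mathcal{C}_-'$, I would tune the same higher-order terms so that additional cancellations force $\val(B) > b/2$, ensuring $D < 0$ (using that $A$ has a sign fixed by $\E$ and $\E'$), and hence two distinct complex conjugated intersection points. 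Again this is an open, non-empty condition, so realisations exist, and they can be chosen distinct from $\mathcal{C}_+, \mathcal{C}_+'$ since the two regimes are disjoint.

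There is no real obstacle here; the only thing to check is that both open regions $\{\val(B)<b/2\}$ and $\{\val(B)>b/2\}$ in the space of realisations of $(C,\E)$ and $(C',\E')$ are non-empty, which is exactly what was established in the dichotomy of case \ref{ProofNonTwist} of the proof of \Cref{Th4.3.9}. Selecting one pair of realisations from each region yields the required $\mathcal{C}_\pm$ and $\mathcal{C}_\pm'$, completing the proof.
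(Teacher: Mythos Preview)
Your proposal is correct and takes essentially the same approach as the paper, which simply states that the corollary follows from \Cref{Item4.3.9.2} of \Cref{Th4.3.9} without further argument. Your version is more explicit than necessary; one minor imprecision is the phrase ``the forced one'' regarding cancellation in $B$, since under $\E_e = \E_{e'}'$ the leading-order cancellation is merely \emph{possible} rather than forced, but this does not affect the validity of the argument.
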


We obtain an other corollary in the case of multiplicity 2 real intersection points, which follows from \Cref{ProofDisjoint}, \Cref{ProofTwist} and \Cref{ProofNonTwist} of the proof of \Cref{Th4.3.9}.

\begin{corollary}
\label{CorNonTwistTang}
Let $(C,\E )$ and $(C' , \E ')$ be two non-singular real tropical curves in $\R^2$ such that there exists a closed bounded edge $e$ of $C$ contained in the interior of an edge $e'$ of $C'$.
If there exist realisations $\mathcal{C}_0$ and $\mathcal{C}_0 '$ in $(\K^\times )^2$ of $(C,\E )$ and $(C' , \E ')$ intersecting in a multiplicity 2 real point with tropicalisation in $e$, then $e$ is non-twisted and $\E_e = \E_{e '} '$.
\end{corollary}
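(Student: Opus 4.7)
The plan is to prove the statement by contraposition, directly exploiting the trichotomy established inside the proof of \Cref{Th4.3.9}. Recall that in that proof, after an affine transformation and passage to local equations, the intersection points of $\mathcal{C}_0$ and $\mathcal{C}_0'$ with tropicalisation in $e$ are the roots in $\K^\times$ of an explicit degree $2$ polynomial in $\K_\R[z]$ whose discriminant has the form $D = B^2 - 4 A t^{b}$, where the parameter $b$ is the length of $e$ and $A,B$ are expressions in the leading coefficients of the two defining polynomials. A multiplicity $2$ real intersection point with tropicalisation in $e$ is equivalent to $D = 0$, so it suffices to show that $D = 0$ forces $\E_e = \E_{e'}'$ and $e$ non-twisted.

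First I would revisit the case $\E_e \neq \E_{e'}'$, treated in \Cref{ProofDisjoint} of the proof of \Cref{Th4.3.9}: there the leading terms of the two summands making up $B$ cannot cancel, so $\val(B) = 0$, which forces $D$ to be dominated by $B^2 > 0$. In particular $D \neq 0$, so this case is incompatible with the existence of a multiplicity $2$ real intersection. Next I would revisit the case $\E_e = \E_{e'}'$ with $e$ twisted, treated in \Cref{ProofTwist}: here \Cref{PropTwistSign} relates the signs of the relevant monomial coefficients to the twist status of $e$ and yields again $D > 0$, ruling out $D = 0$.

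These two steps together leave only the case $\E_e = \E_{e'}'$ with $e$ non-twisted (the situation analysed in \Cref{ProofNonTwist}), where indeed $D = 0$ is achievable (for exactly two pairs of realisations, as shown there). The contrapositive of the first two cases is exactly the conclusion of the corollary, so the proof is complete.

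I do not expect a genuine obstacle here: the entire sign/valuation analysis has already been carried out in the proof of \Cref{Th4.3.9}, and the corollary is essentially a repackaging of that trichotomy. The only point that must be stated cleanly is the equivalence between ``multiplicity $2$ real intersection point with tropicalisation in $e$'' and ``vanishing of the discriminant $D$ of the associated quadratic'', which is immediate from the construction of the quadratic in the proof of \Cref{Th4.3.9}.
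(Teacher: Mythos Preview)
Your argument is correct and is exactly the approach indicated in the paper: the corollary is stated there as an immediate consequence of \Cref{ProofDisjoint}, \Cref{ProofTwist} and \Cref{ProofNonTwist} in the proof of \Cref{Th4.3.9}, and your contrapositive reading of those cases (showing $D>0$ in the first two, hence $D=0$ only in the third) is precisely that deduction spelled out.
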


\begin{example}
\label{ExBitangent}

\begin{figure}
\centering
\includegraphics[width=0.5\textwidth]{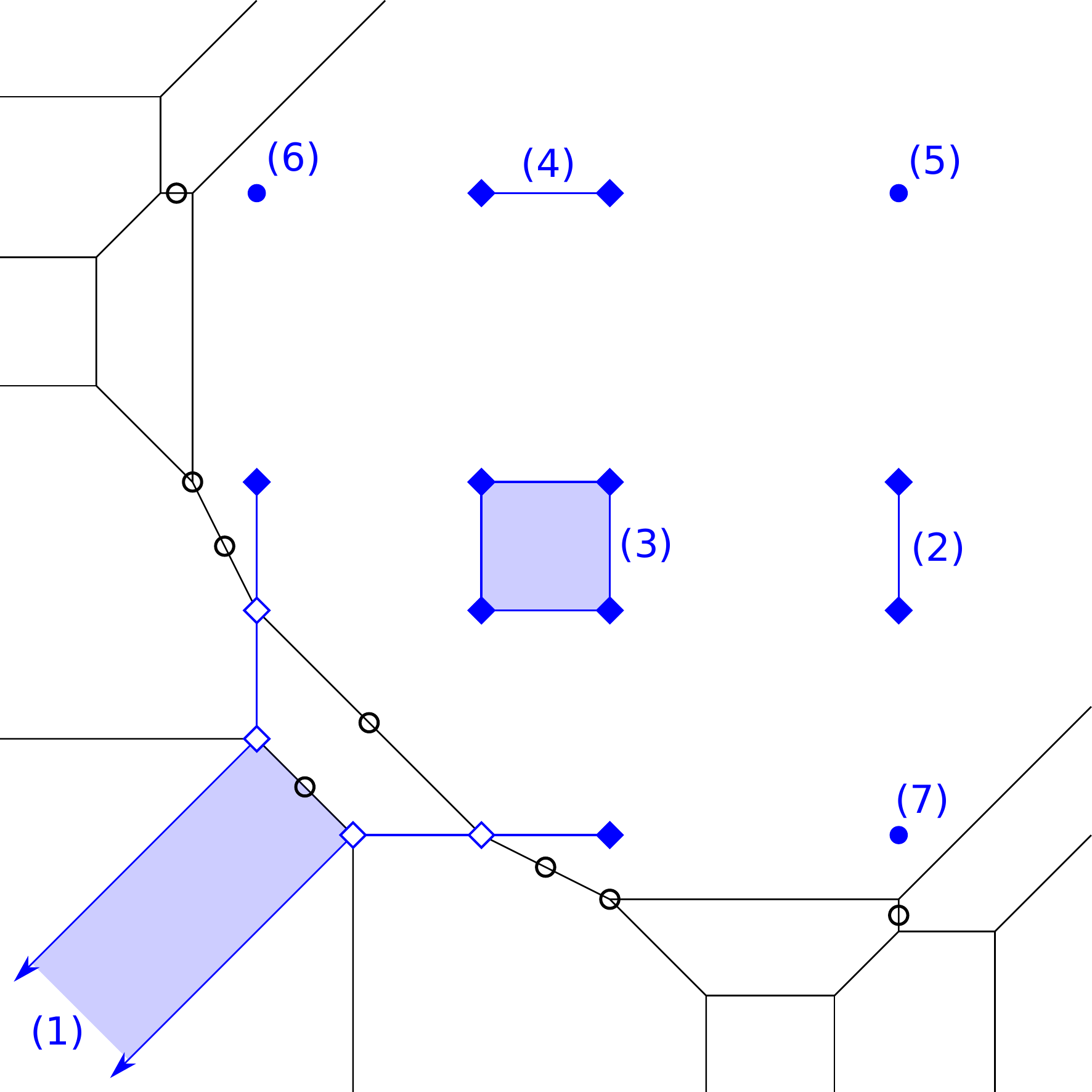}
\caption{Bitangent classes on the tropical quartic curve from \Cref{ExBitangent}}
\label{FigBitangentQuartic}
\end{figure}

Let $C$ be a non-singular tropical curve of degree 4 in $\T \pr^2$ as pictured in \Cref{FigBitangentQuartic}.
Let $\E$ be a real phase structure on $C$.
The tropical curve  $C$ has the same dual subdivision $\Delta_C$ as the tropical curve studied in \cite[Example 1.3]{cueto2020combinatorics}.
The blue zones correspond to the \emph{bitangent classes} of $C$ studied in \cite{baker2016bitangents}, \cite{len2020lifting},\cite{cueto2020combinatorics}.
In this example, any point lying in a bitangent class is the vertex of a tropical line $L$ such that the set-theoretic intersection $C\cap L$ consists of 2 connected components $E,E'$ with $(C\circ L)_E = (C\circ L)_{E'} = 2$.
Cueto and Markwig \cite[Theorem 1.2]{cueto2020combinatorics} proved that any bitangent class on a non-singular tropical quartic in $\T \pr^2$ is realised by either 0 or 4 real bitangents.
Moreover, they gave a classification of distribution of signs conditions in order for a bitangent class to lift to 4 (totally) real bitangents.

Throughout the example, whenever we mention an edge $e$ of $C$, we are referring to those containing a black dot on \Cref{FigBitangentQuartic}.
For any tropical line $L$ with vertex $v$ in the bitangent class $(3)$ such that $v$ is not a corner of the class (hence at least one of the intersection components $E,E'$ is a transverse intersection point), we obtain by \Cref{CorTransMult2} that at least one of the two intersection points is realised by two distinct points of multiplicity 1, hence $L$ cannot lift to a bitangent.
Then only the 4 corners of $(3)$ could give rise to some real bitangent. 
For each tropical line $L$ with vertex one of those corners, there exists exactly one real phase structure $\E '$ on $L$ so that the real tropical line $(L,\E ')$ can be realised by a real bitangent (one could see this by computing an extremal case of \Cref{CorTransMult2} for each isolated vertex of $C$ reached by $L$, also marked by black dots), and those are realised by exactly one totally real bitangent (ie. ~ bitangent in two real points) according to \cite[Example 1.3]{cueto2020combinatorics}.

Similarly, any tropical line $L$ with vertex $v$ in the bitangent class $(1)$, such that $v$ is not a filled blue point of $(1)$, cannot lift to a bitangent (again by \Cref{CorTransMult2}).
A real tropical line $(L,\E ')$ with vertex a filled blue point in $(1)$ can lift to some totally real bitangents only if the edge $e$ of $C$ contained in the intersection $C\cap L$ is not twisted and the real phase structures $\E$ and $\E '$ agree on $e$ by \Cref{CorNonTwistTang}. 
There are two possible real phase structures $\E'$ on $L$ agreeing with the real phase structure $\E$ on the edge $e$, hence each such real tropical line $(L,\E ')$ could be realised by a totally real bitangent, and those are realised by exactly one totally real bitangent according to \cite[Example 1.3]{cueto2020combinatorics}.

Using \Cref{CorTransMult2} and \Cref{CorNonTwistTang} in the same way as for the bitangent class $(1)$, we obtain for the bitangent classes $(2)$ and $(4)$ that only the tropical lines $L$ with vertex an endpoint of $(2), (4)$ can lift to a bitangent, and if the edge $e$ contained in the intersection $C\cap L$ is not twisted, there are two possible real phase structures $\E'$ for each $L$ so that the realisations of $(L,\E ')$ can be tangent in two real points.
According to \cite[Example 1.3]{cueto2020combinatorics}, each of these real tropical lines $(L,\E')$ is realised by exactly one totally real bitangent.

Finally, using \Cref{CorNonTwistTang}, the real tropical lines $(L,\E ' )$ with vertex $(5), (6)$ or $(7)$ can be realised by totally real bitangents only if the two edges of $C$ contained in $C\cap L$ are not twisted and the real phase structures $\E$ and $\E '$ agree on those edges. 
If those conditions are satisfied, the bitangent classes $(5), (6) , (7)$ lift to 4 totally real bitangents according to \cite[Example 1.3]{cueto2020combinatorics}.
\end{example}

\subsection{Relative twists and non-transverse real intersection}

\label{SecNonTransverseRel}

Let us finally consider non-singular real tropical curves $(C,\E) , (C' , \E ')$ with a non-transverse connected component $E \subset C\cap C'$, where $E$ is a segment (non-reduced to a point) strictly contained in both a closed edge of $C$ and a closed edge of $C'$ (see for instance the underlying local model in \Cref{FigDefRel}).
Since the tropical curves $C$ and $C'$ are non-singular, we obtain that $(C\circ C')_E = 2$, again using the balancing condition and the fact that all edges have weight 1. 
By \Cref{PropCpxMult} and \Cref{PropRealMult}, every realisation of $(C,\E)$ and $(C' , \E ')$ intersect in two points (counted with multiplicity) with tropicalisation in $E$, which are either two distinct real points, or two distinct complex conjugated points, or a multiplicity 2 real point with tropicalisation in $E$.
Using the following definition, we can compute the real intersection points in a similar way as in \Cref{Th4.3.9}.

\begin{definition}
\label{DefRelTwist}
Let $(C,\E )$ and $(C' , \E ')$ be two non-singular real tropical curves in $\R^2$ such that there exists a non-transverse connected component $E \subset C\cap C'$ which is a segment (non-reduced to a point) strictly contained in both a closed edge $e$ of $C$ and a closed edge $e'$ of $C'$ (see for example the underlying local model in \Cref{FigDefRel}).
If $\E_e = \E_{e'}'$, we say that the segment $E$ is \emph{relatively twisted} if the edges of $C\cup C'$ adjacent to $E$ and on distinct sides of the affine line containing $E$ share a common element of $\Z_2^2$ coming from $\E \cup \E '$ (see \Cref{FigRelTwist}).
\end{definition}

\begin{figure}
\centering
\begin{subfigure}[t]{0.435\textwidth}
	\centering
	\includegraphics[width=\textwidth]{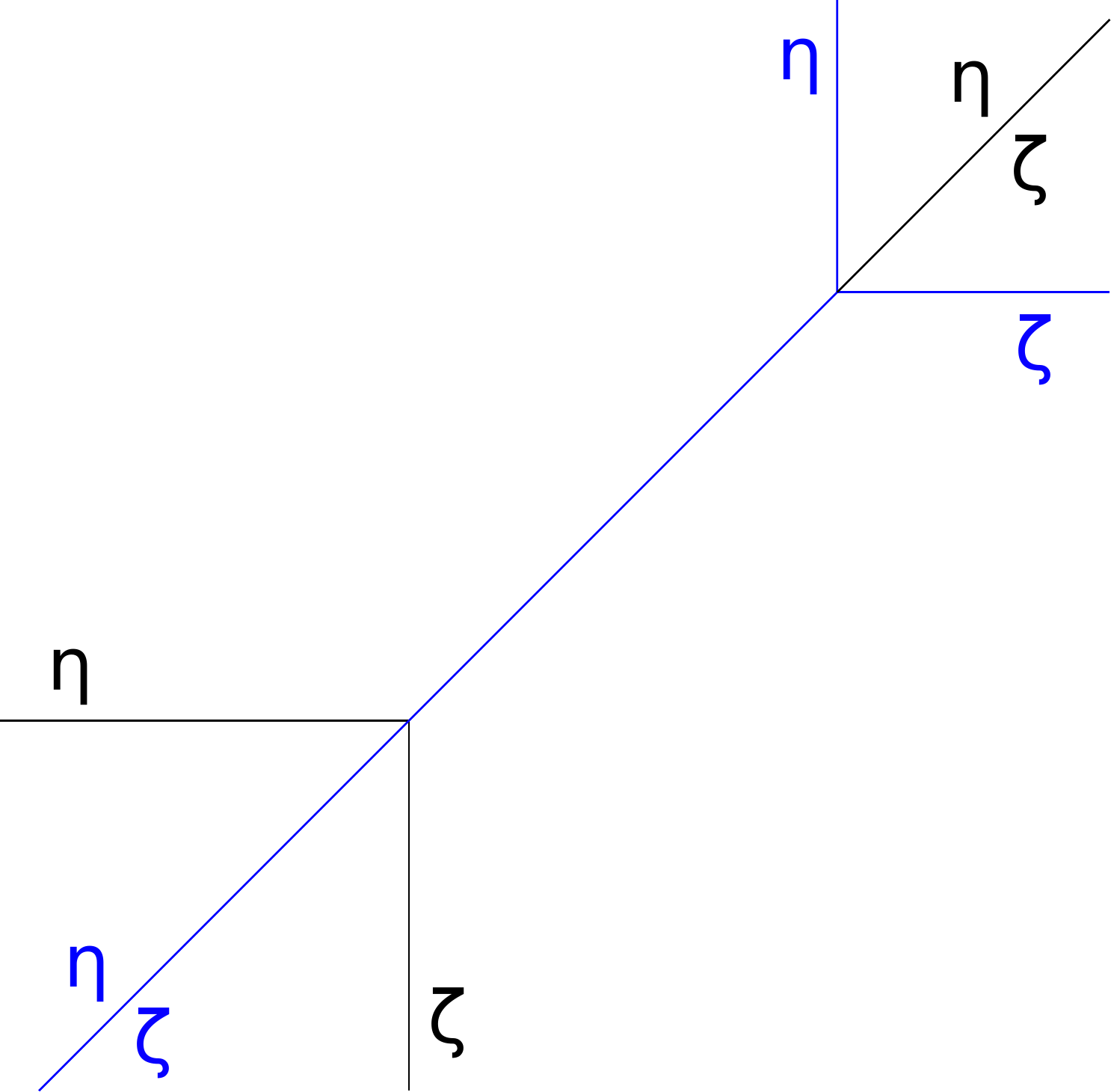}
	\caption{Relatively non-twisted intersection.}
	\label{FigRelNonTwist}
\end{subfigure}\hfill
\begin{subfigure}[t]{0.435\textwidth}
	\centering
	\includegraphics[width=\textwidth]{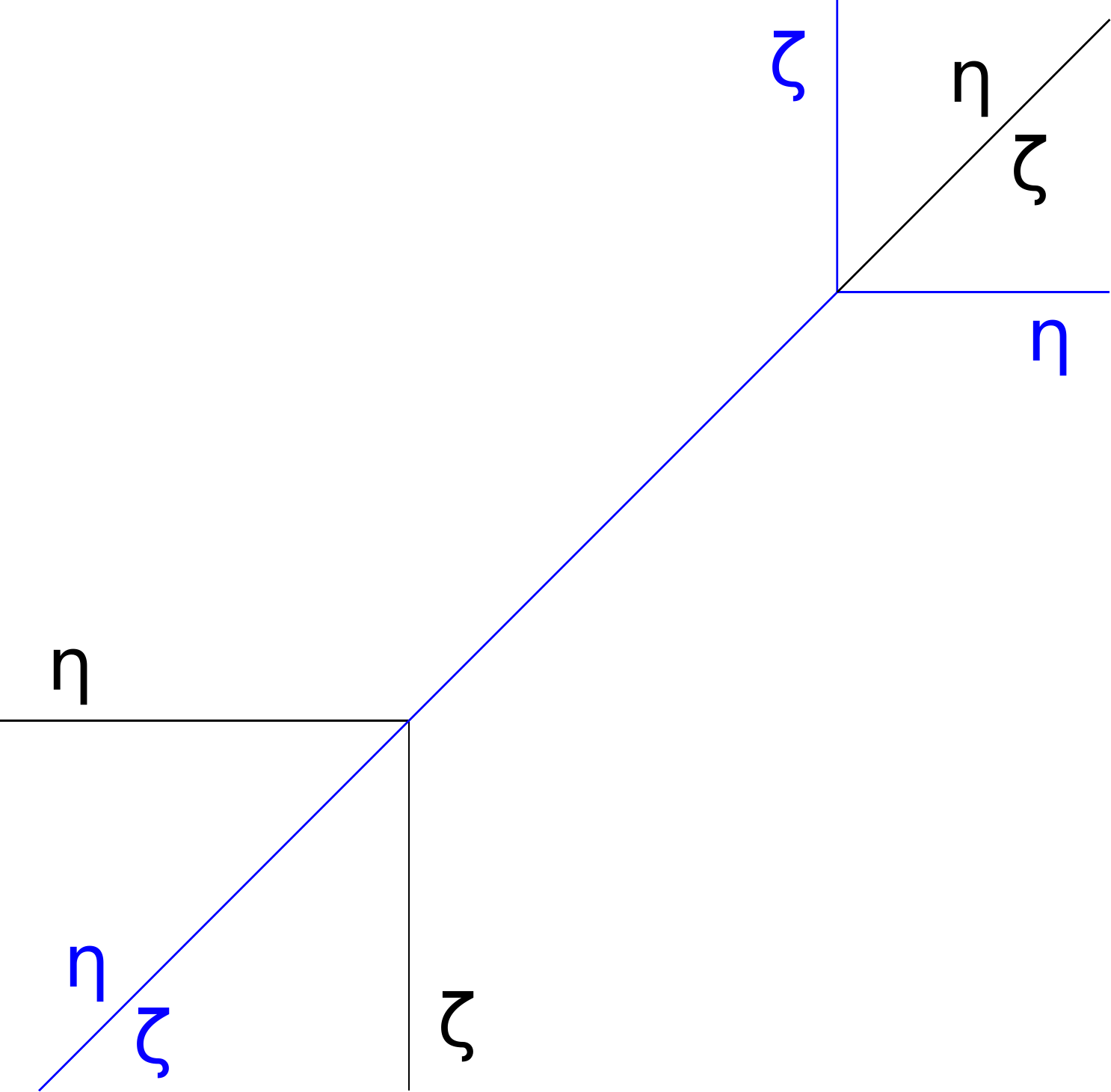}
	\caption{Relatively twisted intersection.}
	\label{FigRelTwist}
\end{subfigure}
\caption{\Cref{DefRelTwist}.}
\label{FigDefRel}
\end{figure}

Let $(C,\E )$ and $(C' , \E ')$ be two non-singular real tropical curves in $\R^2$, with associated distributions of signs $\delta , \delta '$, such that there exists a non-transverse connected component $E \subset C\cap C'$ which is a segment (non-reduced to a point) strictly contained in both a closed edge $e$ of $C$ and a closed edge $e'$ of $C'$.
Let $\sigma_e$ be the 2-dimensional face of $\Delta_C$ dual to the vertex of $e$ contained in $E$, and let $\sigma_{e'}$ be the 2-dimensional face of $\Delta_{C'}$ dual to the vertex of $e'$ contained in $E$. 
Let $v_i^e, i = 1,2,3$ be the vertices of $\sigma_e$ and let $v_i^{e'}, i = 1,2,3$ be the vertices of $\sigma_{e'}$, so that $v_1^\bullet , v_2^\bullet$ are the two vertices of the edge dual to $\bullet$.
Up to a translation of $\sigma_{e'}$, we can assume that the vertices $v_i^e , v_i^{e'} , i=1,2$ have the same integer coordinates in their corresponding dual subdivision.
We can reformulate \Cref{DefRelTwist} in terms of distributions of signs, in a similar way as for the twist case, as follows.

\begin{proposition}
\label{PropSignRelTwist}
Assume, up to a translation of $\sigma_{e'}$, that the vertices $v_i^e , v_i^{e'} , i=1,2$ have the same integer coordinates in their corresponding dual subdivision.
Assume that $ \delta (v_1^e) \delta (v_2^e) \delta ' ( v_1^{e '}) \delta ' ( v_2^{e '}) = 1$. 
\begin{enumerate}
\item \label{DisCoord} If the coordinates modulo 2 of $v_3^e$ and $v_3^{e'}$ are distinct in the dual subdivisions $\Delta_{C} , \Delta_{C'}$, then the segment $E$ is relatively twisted if and only if $\delta (v_i^e) \delta (v_3^e) \delta ' (v_j^{e'}) \delta ' (v_3^{e'}) = +1$, for both $(i,j) = (1,2) , (2,1)$.
\item \label{ComCoord} If the coordinates modulo 2 of $v_3^e$ and $v_3^{e'}$ coincide in the dual subdivisions $\Delta_{C} , \Delta_{C'}$, then the segment $E$ is relatively twisted if and only if we have $\delta (v_3^e) \delta (v_i^e) \delta ' (v_3^{e'}) \delta ' ( v_i^{e '}) = -1$ for both $i=1,2$.
\end{enumerate} 
\end{proposition}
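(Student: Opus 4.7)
The plan is to translate the relative-twist property, defined via real phase structures, into a sign condition by mimicking the strategy used for \Cref{PropTwistSign} for standard twists. The argument has three stages: first I recast real phase membership as a linear sign equation, then I rewrite ``relatively twisted'' as a single $\Z_2$-equation using the vertex axiom of real phase together with a geometric side-assignment lemma, and finally I split this equation into the two stated cases using a parity dichotomy forced by non-singularity.

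Expanding $\delta(\varepsilon(u)) = (-1)^{\varepsilon \cdot u}\delta(u)$ shows that, for an edge $f$ of a non-singular tropical curve dual to $\{u_1, u_2\}$ in the relevant dual subdivision, a class $\varepsilon \in \Z_2^2$ belongs to $\E_f$ if and only if $\varepsilon \cdot (u_1 - u_2) \equiv 1 + s(\delta(u_1)\delta(u_2)) \pmod 2$, where $s(\pm 1) = (1 \mp 1)/2 \in \{0, 1\}$. A balancing computation at $v_e$ in the normal-form coordinates $v_1^e = v_1^{e'} = (0,0)$, $v_2^e = v_2^{e'} = (1,0)$ verifies the geometric lemma that, regardless of the signs of the $y$-coordinates of $v_3^e$ and $v_3^{e'}$, the edge $e_i$ at $v_e$ dual to $\{v_i^e, v_3^e\}$ and the edge $e'_i$ at $v_{e'}$ dual to $\{v_i^{e'}, v_3^{e'}\}$ lie on the same tropical side of the affine line containing $E$, for $i = 1, 2$. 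Combined with the vertex axiom of real phase and the hypothesis $\E_e = \E_{e'}'$, this lemma yields: $E$ is relatively twisted if and only if the unique element of $\E_e \cap \E_{e_1}$ coincides with the unique element of $\E_e \cap \E_{e'_2}'$.

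Setting $d = v_2^e - v_1^e$, $\Delta v = v_3^{e'} - v_3^e$, $d_1 = v_1^e - v_3^e$ and $d'_2 = v_2^{e'} - v_3^{e'}$, this equality of representatives translates into
\[
\varepsilon_0 \cdot (-d + \Delta v) \equiv s(\delta(v_1^e)\delta(v_3^e)) + s(\delta'(v_2^{e'})\delta'(v_3^{e'})) \pmod{2}
\]
for any $\varepsilon_0 \in \E_e$. Non-singularity of $\sigma_e$ and $\sigma_{e'}$ gives $\det(d, v_3^e - v_1^e) = \pm 1 = \det(d, v_3^{e'} - v_1^{e'})$, hence $\det(d, \Delta v) \equiv 0 \pmod{2}$; primitivity of $d$ then forces either $\Delta v \equiv 0 \pmod{2}$ (the coinciding-parities case) or $\Delta v \equiv d \pmod{2}$ (the distinct-parities case). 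In the distinct-parities case the left-hand side of the displayed equation vanishes modulo $2$, and the condition reduces to $\delta(v_1^e)\delta(v_3^e)\delta'(v_2^{e'})\delta'(v_3^{e'}) = +1$; in the coinciding-parities case the left-hand side becomes $\varepsilon_0 \cdot d \equiv 1 + s(\delta(v_1^e)\delta(v_2^e))$ by the first-stage formula applied to $\varepsilon_0 \in \E_e$, yielding after simplification $\delta(v_2^e)\delta(v_3^e)\delta'(v_2^{e'})\delta'(v_3^{e'}) = -1$. The equivalence of the two stated choices of indices in each case follows by multiplying the corresponding four-factor products and invoking the running hypothesis $\delta(v_1^e)\delta(v_2^e)\delta'(v_1^{e'})\delta'(v_2^{e'}) = 1$.

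The main technical obstacle is the geometric side-assignment lemma in the second stage: a priori, a swap between $e_i$ and $e_{3-i}$ at one of the two vertices -- corresponding to the two possible sides of $\{v_1, v_2\}$ on which $v_3^e$ or $v_3^{e'}$ may sit in the aligned dual picture, which also controls whether $e$ and $e'$ issue from $v_e$ and $v_{e'}$ in the same or in opposite tropical directions -- could destroy the clean identification of the partitions of $\E_e$ at $v_e$ and $v_{e'}$. The explicit normal-form balancing computation confirms that this swap does not occur, and after that point the remaining $\Z_2$-arithmetic is routine.
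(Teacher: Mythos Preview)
Your proof is correct, but it takes a genuinely different route from the paper's. The paper proceeds in two lines: it glues the two dual triangles $\sigma_e$ and $\sigma_{e'}$ along their common edge $\{v_1,v_2\}$ to form an auxiliary simplicial complex $\Delta'$, notes that the hypothesis $\delta(v_1^e)\delta(v_2^e)\delta'(v_1^{e'})\delta'(v_2^{e'})=1$ lets one equip $\Delta'$ with a consistent sign distribution (either $\delta\cup\delta'$ or $\delta\cup(-\delta')$), and observes that ``$E$ relatively twisted'' is literally ``the interior edge of $\Delta'$ is twisted''. The statement then follows by a direct citation of \Cref{PropTwistSign}. By contrast, you unwind the definition of relative twist from scratch via the real-phase membership formula $\varepsilon\cdot(u_1-u_2)\equiv 1+s(\delta(u_1)\delta(u_2))$, prove a separate side-assignment lemma by an explicit balancing computation, and then carry out the $\Z_2$-linear algebra by hand, essentially re-deriving \Cref{PropTwistSign} in this setting. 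Your approach is self-contained and makes every step explicit (in particular your side-assignment verification is a point the paper leaves implicit in the phrase ``relatively twisted iff the interior edge of $\Delta'$ is twisted''); the paper's approach is much shorter and shows conceptually why the two sign criteria have exactly the same shape as those in \Cref{PropTwistSign}. One small expository remark: when you write that the displayed $\Z_2$-equation holds ``for any $\varepsilon_0\in\E_e$'', this is only justified after your case split, since in each of the two parity cases the left-hand side $\varepsilon_0\cdot(-d+\Delta v)$ is seen to be independent of the choice of $\varepsilon_0\in\E_e$; it might be clearer to first derive the equation for the specific $\varepsilon_0=\alpha\in\E_e\cap\E_{e_1}$ and then note this independence.
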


\begin{proof}
Consider the 2-dimensional simplicial complex $\Delta '$ given by gluing $\sigma_e$ and $\sigma_{e'}$ along the 1-dimensional faces dual to $e,e'$. 
Since $\delta (v_1^e) \delta (v_2^e) \delta ' ( v_1^{e '}) \delta ' ( v_2^{e '}) = 1$ by assumption, we have either $\delta (v_i^e) = \delta ' ( v_i^{e '})$ for both $i=1,2$, or $\delta (v_i^e) \neq \delta ' ( v_i^{e '})$ for both $i=1,2$. 
The segment $E$ is relatively twisted if and only if the edge dual to the interior edge of $\Delta '$ is twisted.
Therefore, we obtain the sign conditions by using \Cref{PropTwistSign} on $\Delta '$ with distribution of signs induced by $\delta \cup \delta ' $ if $\delta (v_i^e) = \delta ' ( v_i^{e '})$ for both $i=1,2$, and distribution of signs induced by $\delta \cup ( - \delta ') $ if $\delta (v_i^e) \neq \delta ' ( v_i^{e '})$ for both $i=1,2$.
\end{proof}

 
\begin{theorem}[\Cref{IntroTh4.3.10}]
\label{Th4.3.10}
Let $(C,\E )$ and $(C' , \E ')$ be two non-singular real tropical curves in $\R^2$ such that there exists a non-transverse connected component $E \subset C\cap C'$ which is a segment, non-reduced to a point, strictly contained in both a closed edge $e$ of $C$ and a closed edge $e'$ of $C'$ (see \Cref{FigDefRel}).
\begin{enumerate}
\item \label{Item4.3.10.1} If $\E_e \neq \E_{e '} '$, or if $\E_e = \E_{e'}'$ and $E$ is relatively non-twisted, then all realisations of $(C,\E )$ and $(C' , \E ')$ intersect in two distinct real points of multiplicity 1 with tropicalisation in $E$.
\item \label{Item4.3.10.2} If $\E_e = \E_{e '} '$ and $E$ is relatively twisted, then a realisation $(C,\E )$ can intersect a realisation of $(C' , \E ')$ in either two distinct real points of multiplicity 1 with tropicalisation in $E$, two distinct complex conjugated points of multiplicity 1 with tropicalisation in $E$, or a multiplicity 2 real point with tropicalisation in $E$.
Moreover, there exist infinitely many realisations satisfying the first two intersection types, and there exist exactly two pairs of realisations satisfying the third intersection type.
\end{enumerate}
%
\end{theorem}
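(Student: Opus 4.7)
The plan is to mimic the discriminant-sign argument used in the proof of \Cref{Th4.3.9}, adapted to the setting where both edges $e$ and $e'$ have a vertex adjacent to the intersection segment $E$ rather than only $e$. As there, the three possible outcomes (two distinct real points, two complex conjugate points, or one real point of multiplicity $2$) are forced by the equality $(C\circ C')_E=2$ together with \Cref{PropCpxMult} and \Cref{PropRealMult}, so the only real content is to determine which outcome occurs in terms of $\E_e$, $\E_{e'}'$, and the relative twist of $E$, and then to count the realisations producing each outcome.

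After an affine linear transformation I normalise $E$ to have primitive integer direction $(1,1)$ with endpoints $(0,0)$ and $(b,b)$ for some $b>0$, and arrange, up to swapping the roles of $C$ and $C'$, that $(0,0)$ is a vertex of $C$ (but not of $C'$) while $(b,b)$ is a vertex of $C'$ (but not of $C$). A local defining polynomial of a realisation $\mathcal{C}$ of $(C,\E)$ is then a three-term polynomial built from the triangle $\sigma_e\subset\Delta_C$ at the vertex $(0,0)$: two edge monomials indexed by $v_1^e,v_2^e$ of valuation $0$, and an outlier monomial $\delta(v_3^e)z^{m}w^{n}$ opposite to the edge $e^\vee$. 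Symmetrically, the local equation of a realisation $\mathcal{C}'$ has two edge monomials of valuation $0$ together with an outlier monomial $\delta'(v_3^{e'})z^{r}w^{s}t^{b}$, the factor $t^b$ recording the position of the vertex $(b,b)$ of $C'$ along $E$. Using the two edge monomials of each curve to match coefficients along the affine line through $E$ (equivalently, substituting $w=-(\varepsilon_{k+1,l}/\varepsilon_{k,l+1})z$ and clearing a monomial factor, exactly as in the proof of \Cref{Th4.3.9}), I expect to reduce the intersection in $(\K^\times)^2$ with tropicalisation in $E$ to a single polynomial equation $Az^{2}+Bz+C=0$ in $\K_\R[z]$, where $A$ collects the contribution of one outlier monomial, $C$ collects the contribution of the other outlier together with the factor $t^{b}$, and $B$ is a sum of two terms coming from the ``matching'' of the edge-direction monomials of the two curves.

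The sign of the discriminant $D=B^{2}-4AC$ will then split into the three cases of the theorem in perfect parallel to \Cref{ProofDisjoint}, \Cref{ProofTwist} and \Cref{ProofNonTwist} of the proof of \Cref{Th4.3.9}. If $\E_e\neq\E_{e'}'$ the two leading contributions to $B$ have opposite signs that do not cancel, so $\val(B)=0$ and $D>0$; if $\E_e=\E_{e'}'$ and $E$ is relatively non-twisted, \Cref{PropSignRelTwist} forces $-4AC$ to be a positive element of valuation $b$, and $D>0$ irrespective of whether $B$ cancels at leading order, which together yield \ref{Item4.3.10.1}. If $\E_e=\E_{e'}'$ and $E$ is relatively twisted, the same proposition gives $-4AC<0$ of valuation $b$, so the sign of $D$ is governed by the comparison of $\val(B)$ with $b/2$: the open conditions $\val(B)<b/2$ and $\val(B)>b/2$ produce infinitely many realisations with two distinct real and two complex-conjugate intersection points respectively, while the closed condition $\val(B)=b/2$ combined with $B^{2}=4ACt^{-b}$ cuts out exactly two pairs of realisations producing a multiplicity-$2$ real intersection point, proving \ref{Item4.3.10.2}. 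The step I expect to be the main obstacle is the bookkeeping between \Cref{DefRelTwist} and \Cref{PropSignRelTwist}: the two subcases \ref{DisCoord} and \ref{ComCoord} of the proposition enter the exponents $n+s$ appearing in the outlier monomials differently, and one must verify that both subcases produce the same uniform dichotomy ``relatively twisted $\Longleftrightarrow$ leading coefficient of $-4AC$ is negative'' that the discriminant argument needs; once this is checked, the rest of the argument transcribes essentially verbatim from the proof of \Cref{Th4.3.9}.
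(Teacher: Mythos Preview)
Your overall architecture (normalise, substitute, reduce to a quadratic, read off the sign of the discriminant via \Cref{PropSignRelTwist}) is the same as the paper's, but the crucial substitution step is wrong, and this propagates. In \Cref{Th4.3.9} the local equation of $\mathcal{C}$ along $E$ was a \emph{binomial}, so one could eliminate $w$ by the linear relation $w=-(\varepsilon_{k+1,l}/\varepsilon_{k,l+1})z$. Here both endpoints of $E$ are vertices, so the local equation of $\mathcal{C}$ at $(0,0)$ is the \emph{trinomial} $\varepsilon_{k+1,l}z+\varepsilon_{k,l+1}w+\varepsilon_{k,l}=0$, and the correct elimination is the \emph{affine} substitution
\[
w=\frac{-\varepsilon_{k+1,l}z-\varepsilon_{k,l}}{\varepsilon_{k,l+1}}.
\]
Plugging this into the three-term local equation of $\mathcal{C}'$ does \emph{not} yield a quadratic in $z$: the powers $(\,\cdot\,)^{n},(\,\cdot\,)^{q},(\,\cdot\,)^{q+1}$ give a polynomial of degree $m+n=p+q+2$, whose extra roots correspond to transverse intersection points of the local models lying \emph{outside} $E$ (these appear whenever $p>m$ or $q>n$). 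So you cannot simply write $Az^{2}+Bz+C$ and read off $D=B^{2}-4AC$.

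The paper deals with this by rescaling $z\mapsto z_1 t^{-a}$ and analysing when the minimum of the three valuations $\val(A_1),\val(A_2),\val(A_3)$ is attained twice; this isolates the two roots tropicalising into $E$. Case \ref{Item4.3.10.1} with $\E_e\neq\E_{e'}'$ is then handled \emph{entirely} by this valuation argument (forcing $a\in\{0,b\}$, hence two distinct real roots), with no discriminant at all. Only after this, for $\E_e=\E_{e'}'$ and $a=b/2$, does one arrive at a genuine quadratic in the rescaled variable $z_1$ and compute the discriminant whose sign is governed by \Cref{PropSignRelTwist}. Your anticipated ``main obstacle'' (matching the two subcases of \Cref{PropSignRelTwist} to the parity of $n-q$) is indeed the key bookkeeping, but you will not reach it without first fixing the substitution and inserting the valuation step.
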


Note that twisted edges and relatively twisted intersection components have opposite roles in \Cref{Th4.3.9} and \Cref{Th4.3.10}.
The names in \Cref{DefRelTwist} have been chosen in order to have topological properties similar to those for twisted edges listed in \cite[Section 3.2]{brugalle2015brief}.

\begin{proof}[Proof of \Cref{Th4.3.10}]\renewcommand{\qedsymbol}{}
After an affine linear transformation, we can assume without loss of generality that the edges $e , e'$ have primitive integer direction $(1,1)$ and $E$ is the segment with endpoints $v_1 = (0,0)$ and $v_2 = (b,b)$, such that $b>0$. 
We can moreover assume that the two edges of $C$ adjacent to $e$ to have primitive integer direction $(0,1)$ and $(1,0)$, respectively (see for instance the black tropical curve in \Cref{FigRelTwist}).

Then the local equations of some realisations in $(\K^\times )^2$ of $(C,\E)$ and $(C' ,\E ')$ are of the form
\begin{align}
\label{EqRelTwist1} z^{k} w^l \left( \varepsilon_{k+1,l}z + \varepsilon_{k,l+1} w + \varepsilon_{k,l} \right) & = 0 \\
\label{EqRelTwist2} \delta_{m,n} t^{b} z^{m} w^{n} + \delta_{p+1,q} z^{p+1} w^q + \delta_{p,q+1} z^p w^{q+1}  & = 0
\end{align} 
with $m+n = p+q+2$ and such that the coefficients $\varepsilon_{ij} , \delta_{ij} \in \mathbb{K}_\R^\times$ have valuation $0$.
In $(\K^\times )^2$, \Cref{EqRelTwist1} is equivalent to $w = \frac{-1}{\varepsilon_{k,l+1}}(\varepsilon_{k+1,l} z + \varepsilon_{k,l})$.
Then after substitution in \Cref{EqRelTwist2}, we obtain the following equation
\begin{align*}
\label{EqPolRelTwist} & \delta_{m,n} t^{b} z^m \left( \frac{-\varepsilon_{k+1,l} z - \varepsilon_{k,l}}{\varepsilon_{k,l+1}} \right)^{n} \\
+ & \delta_{p+1,q} z^{p+1} \left( \frac{-\varepsilon_{k+1,l} z - \varepsilon_{k,l}}{\varepsilon_{k,l+1}} \right)^{q} \\
+ & \delta_{p,q+1} z^{p} \left( \frac{-\varepsilon_{k+1,l} z - \varepsilon_{k,l}}{\varepsilon_{k,l+1}} \right)^{q+1} = 0.
\end{align*}
%

If $p-m>0$, or $q-n >0$, the local tropical model contains a transverse intersection point outside $E$ of tropical intersection multiplicity $p-m$, or $q-n$.
In those cases, by \Cref{CorTransMult2}, the equation has $p-m$ (or $q-n$) distinct solutions corresponding to points with tropicalisation the point $(- \frac{b}{p-m},0) \not\in E$ (or $(0,- \frac{b}{q-n}) \not\in E$).
In all cases, the two remaining solutions $z_1 ' , z_2 ' \in \K^\times$ are those corresponding to the points with tropicalisation in $E$.
By \cite[Corollary 4.7]{brugalle2012inflection}, the two solutions are of the form $z_1 ' = z_1 t^{-a}$ and $z_2 ' = z_2 t^{-b+a}$, with $\val (z_1) = \val (z_2) = 0$ and $a \in [0,b]$.
In particular, the equation 
\begin{equation}
\label{EqAi}
A_1 + A_2 + A_3 = 0
\end{equation} 
is satisfied, for 
 \[ A_1 := \delta_{m,n} z_1^m \left( \frac{-\varepsilon_{k+1,l} z_1 t^{-a} - \varepsilon_{k,l}}{\varepsilon_{k,l+1}} \right)^{n} t^{b-ma}, \]
 \[ A_2 := \left( \delta_{p+1,q} - \frac{\delta_{p,q+1} \varepsilon_{k+1,l}}{\varepsilon_{k,l+1}} \right) \left( \frac{-\varepsilon_{k+1,l} z_1 t^{-a} - \varepsilon_{k,l}}{\varepsilon_{k,l+1}} \right)^{q} z_1^{p+1} t^{-a(p+1)}, \] and \[ A_3 := \frac{\delta_{p,q+1} \varepsilon_{k,l}}{\varepsilon_{k,l+1}} \left( \frac{-\varepsilon_{k+1,l} z_1 t^{-a} - \varepsilon_{k,l}}{\varepsilon_{k,l+1}} \right)^{q} z_1^p t^{-ap}.  \]
In order for \Cref{EqAi} to be satisfied, the minimum must be reached at least twice in the set $\{ \val (A_i) ~ ; ~ i=1,2,3 \}$.
%
%
We will now prove the statements of the theorem.
\begin{enumerate}[label=(\Roman*)]
\item \label{ProofDisRel} Assume first that $\E_e \neq \E_{e'}'$ (first case of \Cref{Item4.3.10.1}).
Then the product $\delta_{p+1,q} \delta_{p,q+1}\varepsilon_{k+1,l}\varepsilon_{k,l+1}$ is negative.
In particular, we have \[ \val \left( \delta_{p+1,q} - \frac{\delta_{p,q+1} \varepsilon_{k+1,l}}{\varepsilon_{k,l+1}} \right) = 0 \] as there can be no cancellation of the leading coefficients.
Then \[ \quad \quad \quad \min \{ \val (A_i) ~ ; ~ i=1,2,3 \} = \min \{ b-(m+n)a , -(p+q+1)a , -(p+q)a \} .  \]
Since $m+n = p+q+2$, the minimum is reached at least twice if and only if either $a=0$ or $a=b$.
Then $ \val (z_1 ') \neq \val (z_2 ')$ and $\val (z_i ') \in \{ 0,b \}$ for $i=1,2$, hence $z_1 '$ and $z_2 '$ are two distinct real solutions of \Cref{EqAi}.
Therefore, all realisations of $(C,\E)$ and $(C',\E ')$ intersect in two distinct real points $p_1 , p_2$ with tropicalisation the vertices $v_1 = (0,0)$ and $v_2 = (b,b)$ of $E$.

\item \label{ProofRelNonTwist} Assume now that $\E_e = \E_{e'}'$ and the segment $E$ is relatively non-twisted (second case in \Cref{Item4.3.10.1}). 
The real phase structure assumptions implies that the product $\delta_{p+1,q} \delta_{p,q+1}\varepsilon_{k+1,l}\varepsilon_{k,l+1}$ is positive, hence there can be cancellations of the leading coefficients in $\left( \delta_{p+1,q} - \frac{\delta_{p,q+1} \varepsilon_{k+1,l}}{\varepsilon_{k,l+1}} \right)$.
Let $c:= \val \left( \delta_{p+1,q} - \frac{\delta_{p,q+1} \varepsilon_{k+1,l}}{\varepsilon_{k,l+1}} \right)$.
The minimum of $\{ \val (A_i) ~ ; ~ i=1,2,3 \}$ is reached at least twice if and only if either $a=c$ and $c\leq \frac{b}{2}$, or $a=b-c$ and $c\leq \frac{b}{2}$, or $a= \frac{b}{2}$ and $c\geq \frac{b}{2}$.
If $c < \frac{b}{2}$, we have $\val (z_1 ') \neq \val (z_2 ')$, hence $z_1 '$ and $z_2 '$ are two distinct real solutions of \Cref{EqAi}.
If $c \geq \frac{b}{2}$, then the following equation is satisfied,
\begin{equation*}
\label{EqVal}
\quad \quad \quad \delta_{m,n} \left( -\frac{\varepsilon_{k+1,l}}{\varepsilon_{k,l+1}} \right)^{n-q} z_1^2 + \gamma z_1 - \frac{\delta_{p,q+1} \varepsilon_{k,l}}{\varepsilon_{k,l+1}} = 0 ,
\end{equation*}
where $\gamma = 0$ if $c> \frac{b}{2}$ and $\gamma \neq 0 ,\val (\gamma) = 0$ is induced by the non-zero terms of $\left( \delta_{p+1,q} - \frac{\delta_{p,q+1} \varepsilon_{k+1,l}}{\varepsilon_{k,l+1}} \right)$ if $c = \frac{b}{2}$.
Let 
\begin{equation}
\label{EqDiscrimRel}
D:= \gamma^2 + 4 \frac{\delta_{m,n} \delta_{p,q+1} \varepsilon_{k,l}}{\varepsilon_{k,l+1}} \left( -\frac{\varepsilon_{k+1,l}}{\varepsilon_{k,l+1}} \right)^{n-q}
\end{equation}
be the discriminant associated to the degree 2 equation above.
Since $E$ is relatively non-twisted and $n-q = 0 \mod 2$ if and only if $(m,n) = (p,q) \mod 2$, by \Cref{PropSignRelTwist}, the discriminant $D$ is positive, hence $z_1 '$ and $z_2 '$ are two distinct real solutions of \Cref{EqAi}.
Therefore, all realisations of $(C,\E)$ and $(C',\E ')$ intersect in two distinct real points $p_1 , p_2$ with tropicalisation in $E$.

\item \label{ProofRelTwist} Assume finally that $\E_e = \E_{e'}$ and $E$ is relatively twisted.
From \Cref{ProofRelNonTwist}, we can recover the facts that if $c:= \val \left( \delta_{p+1,q} - \frac{\delta_{p,q+1} \varepsilon_{k+1,l}}{\varepsilon_{k,l+1}} \right) < \frac{b}{2}$, the two solutions $z_1 ' , z_2 '$ of \Cref{EqAi} are distinct and real, and if $c\geq \frac{b}{2}$, it depends on the sign of the discriminant $D$ from \Cref{EqDiscrimRel}. 
In the first case, we obtain that all realisations of $(C,\E )$ and $(C', \E ')$ intersect in two distinct real points with tropicalisation in $E$. 
Moreover, the condition $c< \frac{b}{2}$ is open, hence there exist infinitely many such realisations.
In the second case, since $E$ is relatively twisted and $n-q = 0 \mod 2$ if and only if $(m,n) = (p,q) \mod 2$, we can use \Cref{PropSignRelTwist} to determine the sign of the discriminant $D$.
We have that $D$ is positive if \[\gamma^2 > \left| 4 \frac{\delta_{m,n} \delta_{p,q+1} \varepsilon_{k,l}}{\varepsilon_{k,l+1}} \left( -\frac{\varepsilon_{k+1,l}}{\varepsilon_{k,l+1}} \right)^{n-q} \right| , \]
and then all realisations of $(C,\E )$ and $(C', \E ')$ intersect in two distinct real points with tropicalisation in $E$.
We have that $D$ negative if \[\gamma^2 < \left| 4 \frac{\delta_{m,n} \delta_{p,q+1} \varepsilon_{k,l}}{\varepsilon_{k,l+1}} \left( -\frac{\varepsilon_{k+1,l}}{\varepsilon_{k,l+1}} \right)^{n-q} \right| , \]
and then all realisations of $(C,\E )$ and $(C', \E ')$ intersect in two distinct complex conjugated points with tropicalisation in $E$.
Moreover, this case is satisfied when $c> \frac{b}{2}$, which is an open condition, hence there exist infinitely many such realisations.
We have that $D$ is zero if \[\gamma^2 = \left| 4 \frac{\delta_{m,n} \delta_{p,q+1} \varepsilon_{k,l}}{\varepsilon_{k,l+1}} \left( -\frac{\varepsilon_{k+1,l}}{\varepsilon_{k,l+1}} \right)^{n-q} \right| , \]
and then all realisations of $(C,\E )$ $(C', \E ')$ intersect in a multiplicity 2 real point with tropicalisation in $E$.
Moreover, the condition on $\gamma$ implies that $\c = \frac{b}{2}$, which is a closed condition, hence the solution set is of dimension 0, and the discriminant $D$ is a degree 2 polynomial with coefficients in $\K_\R$, hence there exist exactly two pairs of realisations of $(C,\E)$ and $(C',\E ')$ intersecting in a multiplicity 2 real point with tropicalisation in $e$.
\quad \quad \quad $\blacksquare$
\end{enumerate}
\end{proof}

We obtain from \Cref{ProofDisRel} of the proof of \Cref{Th4.3.10} the following corollary.

\begin{corollary}
\label{CorRelDistRealPhase}
Let $(C,\E )$ and $(C' , \E ')$ be two non-singular real tropical curves in $\R^2$ such that there exists a non-transverse connected component $E \subset C\cap C'$ which is a segment (non-reduced to a point) strictly contained in both a closed edge $e$ of $C$ and a closed edge $e'$ of $C'$.
Let $v_1$ and $v_2$ be the vertices of $E$.
Let $\mathcal{C}$ and $\mathcal{C'}$ be realisations of $(C,\E )$ and $(C' , \E ')$.
If $\E_e \neq \E_{e'}'$, then for $p_1 , p_2 \in (\K_\R^\times )^2$ the two distinct real points of multiplicity 1 in $\mathcal{C} \cap \mathcal{C'}$ with tropicalisation in $E$, we have (up to renumbering) $\Trop (p_1) = v_1$ and $\Trop (p_2) = v_2$. 
\end{corollary}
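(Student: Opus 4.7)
The plan is to mimic the strategy used for \Cref{CorDistinctPhaseTwist}: reduce to the normalised local model set up in the proof of \Cref{Th4.3.10}, and then exploit the valuation estimate obtained in case \Cref{ProofDisRel} to pin down the tropicalisations of the two real intersection points. After the affine linear transformation from the proof of \Cref{Th4.3.10}, we may assume $E$ has vertices $v_1 = (0,0)$ and $v_2 = (b,b)$ with $b > 0$, and the curves have the local equations \Cref{EqRelTwist1}--\Cref{EqRelTwist2} there. The two intersection points with tropicalisation in $E$ correspond to the two solutions $z_1', z_2' \in \K^\times$ arising after substitution, as extracted in the proof of \Cref{Th4.3.10}.

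First I would invoke \cite[Corollary 4.7]{brugalle2012inflection} to write these two solutions in the form $z_1' = z_1 t^{-a}$, $z_2' = z_2 t^{-(b-a)}$ with $\val(z_1) = \val(z_2) = 0$ and $a \in [0,b]$; our task reduces to showing $a \in \{0, b\}$. Substituting $z_1' = z_1 t^{-a}$ into the polynomial equation of the proof yields the relation $A_1 + A_2 + A_3 = 0$ with $A_1, A_2, A_3$ as defined there, and the existence of a solution forces the tropical (non-archimedean) triangle inequality: the minimum of $\{\val(A_1), \val(A_2), \val(A_3)\}$ must be attained at least twice.

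Next I would plug in the hypothesis $\E_e \neq \E_{e'}'$. Exactly as in \Cref{ProofDisRel}, this makes the product $\delta_{p+1,q}\delta_{p,q+1}\varepsilon_{k+1,l}\varepsilon_{k,l+1}$ negative, so no cancellation of leading coefficients can occur in
\[
\delta_{p+1,q} - \frac{\delta_{p,q+1}\varepsilon_{k+1,l}}{\varepsilon_{k,l+1}},
\]
and hence this expression has valuation $0$. The valuations of the three terms therefore reduce to
\[
\min\{\val(A_1), \val(A_2), \val(A_3)\} = \min\{b - (m+n)a,\ -(p+q+1)a,\ -(p+q)a\}.
\]
Using the relation $m+n = p+q+2$ inherited from the non-singularity of $C$ and $C'$, an elementary case analysis shows that this minimum is attained at least twice exactly when $a = 0$ or $a = b$.

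The conclusion is then immediate: up to renumbering, $\val(z_1') = 0$ and $\val(z_2') = -b$, so the two real intersection points have tropicalisations $(0,0) = v_1$ and $(b,b) = v_2$, as required. The only delicate point I anticipate is the bookkeeping in the case analysis of which two of the three valuations realise the minimum; this is purely combinatorial and parallels the $\{b-2a, -a, 0\}$ analysis carried out in the proof of \Cref{CorDistinctPhaseTwist}, so no new ideas beyond those already present in \Cref{Th4.3.10} are needed.
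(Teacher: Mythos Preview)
Your proposal is correct and follows essentially the same approach as the paper: the corollary is stated there as an immediate consequence of case \Cref{ProofDisRel} in the proof of \Cref{Th4.3.10}, and that case already contains precisely the valuation computation you reproduce (the minimum of $\{b-(m+n)a,\ -(p+q+1)a,\ -(p+q)a\}$ being attained twice iff $a\in\{0,b\}$). There is nothing to add.
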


The following corollary follows from \Cref{ProofRelTwist} of the proof of \Cref{Th4.3.10} and will be useful for proving conditions for hyperbolicity.

\begin{corollary}
\label{CorNonRelTwist}
Let $(C,\E )$ and $(C' , \E ')$ be two non-singular real tropical curves in $\R^2$ such that there exists a non-transverse connected component $E \subset C\cap C'$ which is a segment (non-reduced to a point) strictly contained in both a closed edge $e$ of $C$ and a closed edge $e'$ of $C'$.
If $E$ is relatively twisted and $\E_e = \E_{e'}'$, there exist distinct realisations $\mathcal{C}_+ , \mathcal{C}_- \subset (\K^\times )^2$ of $(C,\E )$ and distinct realisations $\mathcal{C}_+ ' , \mathcal{C}_- ' \subset (\K^\times )^2$ of $(C' , \E ')$ such that the intersection $\mathcal{C}_+ \cap \mathcal{C}_+ '$ contains two distinct real points of multiplicity 1 with tropicalisation in $E$, and the intersection $\mathcal{C}_{-} \cap \mathcal{C}_{-} '$ contains two distinct complex conjugated points of multiplicity 1 with tropicalisation in $E$.
\end{corollary}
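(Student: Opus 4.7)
The plan is to derive this corollary directly from case (III) in the proof of Theorem~\ref{Th4.3.10}, which is the case $\E_e = \E_{e'}'$ with $E$ relatively twisted. There it was established that the two intersection points with tropicalisation in $E$ are the roots (after substitution) of a degree $2$ polynomial in $\K_\R[z]$ whose discriminant $D$ was explicitly computed, and that the sign of $D$ is governed by the quantity
\[
c \;:=\; \val\!\left(\delta_{p+1,q} - \frac{\delta_{p,q+1}\,\varepsilon_{k+1,l}}{\varepsilon_{k,l+1}}\right),
\]
with $D>0$ when $c<b/2$ (yielding two distinct real roots), $D<0$ when $c>b/2$ (yielding two distinct complex conjugated roots), and the pivotal equality $c=b/2$ occurring only on a codimension-one locus. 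Both regimes $c<b/2$ and $c>b/2$ are open in the space of realisations, and the task of the corollary is simply to exhibit explicit coefficients falling into each of the two open regimes while respecting the sign constraints imposed by $\E$ and $\E'$.

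For the pair $(\mathcal{C}_+,\mathcal{C}_+')$, I would take all coefficients $\varepsilon_{ij},\delta_{ij}$ to be real constants (elements of $\R^\times \subset \K_\R^\times$) with signs dictated by the real phase structures. Then $c=0<b/2$ as soon as the real number $\delta_{p+1,q}-\delta_{p,q+1}\varepsilon_{k+1,l}/\varepsilon_{k,l+1}$ is non-zero, which is a generic condition compatible with any prescribed signs. This directly produces realisations with two distinct real intersection points with tropicalisation in $E$.

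For the pair $(\mathcal{C}_-,\mathcal{C}_-')$, I would force the leading terms in the difference $\delta_{p+1,q}-\delta_{p,q+1}\varepsilon_{k+1,l}/\varepsilon_{k,l+1}$ to cancel up to order strictly greater than $b/2$. Concretely, fix $\varepsilon_{k+1,l},\varepsilon_{k,l+1}\in\R^\times$ with the required signs and fix $\delta_{p,q+1}\in\R^\times$; then choose $\delta_{p+1,q}\in\K_\R$ whose leading term equals $\delta_{p,q+1}\varepsilon_{k+1,l}/\varepsilon_{k,l+1}$ but which differs from it by a term of valuation $>b/2$ (for example, a perturbation of order $t^{b}$). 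This guarantees $c>b/2$, hence $D<0$, producing two distinct complex conjugated intersection points with tropicalisation in $E$.

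The only substantive issue, and the step requiring care rather than genuine difficulty, is verifying that the coefficient adjustments used to force $c>b/2$ for the pair $(\mathcal{C}_-,\mathcal{C}_-')$ are compatible with \emph{all} the real phase structure data of $\E$ and $\E'$, not merely with $\E_e$ and $\E_{e'}'$. This is automatic: the real phase structures on $C$ and $C'$ only constrain the signs of the leading coefficients (the values in $\R^\times$ of the terms of valuation $0$ of each $\varepsilon_{ij},\delta_{ij}$), whereas the cancellation used to inflate $c$ above $b/2$ only affects strictly higher-order terms in $t$. Thus the construction stays within the family of realisations of $(C,\E)$ and $(C',\E')$, and the corollary follows.
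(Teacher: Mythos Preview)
Your proposal is correct and follows essentially the same route as the paper, which simply points to case~(III) of the proof of Theorem~\ref{Th4.3.10} and invokes the openness of the conditions $c<b/2$ and $c>b/2$. Your version is slightly more explicit in that you actually construct representative realisations in each regime rather than appealing only to openness, but the underlying argument is the same.
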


We obtain an other corollary in the case of multiplicity 2 real intersection points following from \Cref{ProofDisRel}, \Cref{ProofRelNonTwist} and \Cref{ProofRelTwist} of the proof of \Cref{Th4.3.10}.

\begin{corollary}
\label{CorNonRelTwistTang}
Let $(C,\E )$ and $(C' , \E ')$ be two non-singular real tropical curves in $\R^2$ such that there exists a non-transverse connected component $E \subset C\cap C'$ which is a segment (non-reduced to a point) strictly contained in both a closed edge $e$ of $C$ and a closed edge $e'$ of $C'$.
If $\mathcal{C}_0$ and $\mathcal{C}_0 '$ are realisations of $(C,\E )$ and $(C' , \E ')$ intersecting in a multiplicity 2 real point with tropicalisation in $E$, then $e$ is relatively twisted and $\E_e = \E_{e '} '$.
\end{corollary}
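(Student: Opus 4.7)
The plan is to obtain this corollary immediately by contraposition from the case analysis already carried out in the proof of Theorem \ref{Th4.3.10}. That proof is organised according to a trichotomy on the pair $(\E_e , \E_{e'}', \text{relative twist of } E)$: case \ref{ProofDisRel} treats $\E_e \neq \E_{e'}'$, case \ref{ProofRelNonTwist} treats $\E_e = \E_{e'}'$ with $E$ relatively non-twisted, and case \ref{ProofRelTwist} treats $\E_e = \E_{e'}'$ with $E$ relatively twisted. These three cases are exhaustive and mutually exclusive.

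First I would recall from case \ref{ProofDisRel} that when $\E_e \neq \E_{e'}'$, the local computation forces the minimum of the valuations $\{\val(A_1),\val(A_2),\val(A_3)\}$ to be attained twice only at $a = 0$ or $a = b$, and the corresponding roots $z_1', z_2'$ have distinct valuations and lie in $\K_\R^\times$. Hence every pair of realisations meets in two distinct real points of multiplicity $1$ with tropicalisation in $E$, and in particular never in a multiplicity $2$ real point. Next I would recall from case \ref{ProofRelNonTwist} that when $\E_e = \E_{e'}'$ and $E$ is relatively non-twisted, either the valuation $c$ of the relevant coefficient satisfies $c < b/2$, in which case $z_1', z_2'$ again have distinct valuations and are real, or $c \geq b/2$, in which case the associated discriminant $D$ from \eqref{EqDiscrimRel} is positive by \Cref{PropSignRelTwist}. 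In both sub-cases every pair of realisations gives two distinct real intersection points of multiplicity $1$ with tropicalisation in $E$, and again no multiplicity $2$ real intersection point can occur.

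Thus, if $\mathcal{C}_0$ and $\mathcal{C}_0'$ are realisations of $(C,\E)$ and $(C',\E')$ meeting in a multiplicity $2$ real point with tropicalisation in $E$, the data $(C,\E,C',\E',E)$ cannot fall into either case \ref{ProofDisRel} or case \ref{ProofRelNonTwist} of the proof of \Cref{Th4.3.10}. By the trichotomy, this forces us into case \ref{ProofRelTwist}, i.e.\ $\E_e = \E_{e'}'$ and $E$ is relatively twisted, which is exactly the conclusion of the corollary. There is no genuine obstacle here; the only minor point is to ensure that the local model and local equations \eqref{EqRelTwist1}--\eqref{EqRelTwist2} used in the proof of \Cref{Th4.3.10} apply verbatim to an arbitrary pair of realisations of the given real tropical curves, which is immediate after the affine linear transformation made at the start of that proof.
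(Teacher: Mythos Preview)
Your proposal is correct and follows essentially the same route as the paper, which simply states that the corollary follows from cases \ref{ProofDisRel}, \ref{ProofRelNonTwist} and \ref{ProofRelTwist} of the proof of \Cref{Th4.3.10}. You have made the contrapositive explicit, but the underlying argument is identical.
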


\section{Hyperbolic real tropical curves}
\label{SecHyperbolic}

\subsection{Topological criterion for hyperbolicity}
%
%
%
We start our study of hyperbolic curves near the non-singular tropical limit by translating an equivalent topological condition for hyperbolicity into combinatorial conditions on tropical curves.
For $\mathcal{C}$ a non-singular real algebraic curve in $\pr^2$, recall that an \emph{oval} is a connected component of $\mathcal{C}(\R )$ homeomorphic to $S^1$ disconnecting $\pr^2 (\R)$, and a \emph{pseudo-line} is a connected component of $\mathcal{C}(\R )$ homeomorphic to $S^1$ and which does not disconnect $\pr^2 (\R)$.
The following proposition is given in \cite[Proposition 1.1]{orevkov2007arrangements}, and can be recovered easily from Rokhlin formulas \cite{rokhlin1978complex}. 

\begin{proposition}[\cite{orevkov2007arrangements}]
\label{PropHypRok}
Let $\mathcal{C}$ be a non-singular dividing real algebraic curve of degree $2k$ or $2k+1$ in $\pr^2$.
Then $\mathcal{C}(\R )$ has $l\geq k$ ovals and if $l=k$, then $\mathcal{C}$ is hyperbolic.
\end{proposition}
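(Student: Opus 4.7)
The plan is to deduce both assertions from Rokhlin's complex orientation formula for non-singular dividing curves. In the even-degree case $d = 2k$, this formula reads
\[ 2(\Pi^+ - \Pi^-) = l - k^2, \]
where $\Pi^\pm$ are the numbers of pairs of complex-oriented nested ovals whose induced relative orientations agree (resp.\ disagree) in the usual convention. Since the total number of nested pairs is at most $\binom{l}{2}$, we have $\Pi^+ + \Pi^- \le \binom{l}{2}$, and combining this with Rokhlin's equality yields $|l - k^2| \le l(l-1)$, which after a short case analysis forces $l \ge k$ and so gives the first assertion.

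For the sharper conclusion, assume $l = k$. Rokhlin's formula then yields $\Pi^- - \Pi^+ = \binom{k}{2}$, while $\Pi^+ + \Pi^- \le \binom{k}{2}$ together with $\Pi^+ \ge 0$ force $\Pi^+ = 0$ and $\Pi^- = \binom{k}{2}$. In other words, \emph{every} pair of ovals is nested, so (nesting in $\pr^2(\R)$ being a partial order that restricts to a total order on any mutually nested family) the $k$ ovals form a chain $O_1 \supset O_2 \supset \cdots \supset O_k$. Pick a real point $p$ in the innermost disc bounded by $O_k$. For any real line $\mathcal{L}$ through $p$, the line must leave and re-enter each $O_i$, so $\mathcal{L}\cap \mathcal{C}(\R)$ contains at least $2k$ points. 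By B\'ezout's theorem applied to $\mathcal{L}$ and $\mathcal{C}$, there are at most $d = 2k$ intersection points counted with multiplicity, so equality holds and every intersection point is real, proving that $\mathcal{C}$ is hyperbolic with respect to $p$.

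For odd degree $d = 2k+1$, the argument follows the same template using the analogous Rokhlin-type formula for dividing curves of odd degree, which involves an additional signed contribution from the complex-oriented pseudo-line. Bounding this contribution by its geometric maximum again yields $l \ge k$, and the extremal case $l = k$ saturates the formula, forcing the $k$ ovals to be totally nested and the pseudo-line to pass through the innermost disc bounded by $O_k$. Choosing $p$ in that disc, every real line through $p$ meets the $k$ ovals in $2k$ points and the pseudo-line in one further point, giving $d = 2k+1$ real intersections, and B\'ezout again promotes this to hyperbolicity. The main technical obstacle is precisely this odd-degree case: because the pseudo-line is not null-homologous in $\pr^2(\R)$, its signed contribution to the Rokhlin-type formula must be tracked carefully, and one has to verify that the extremal equality forces the \emph{geometric} statement that the pseudo-line traverses the innermost disc, and not merely a numerical identity between signed counts.
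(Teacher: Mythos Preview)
The paper does not give its own proof of this proposition: it is quoted from Orevkov and the text merely remarks that it ``can be recovered easily from Rokhlin formulas.'' Your approach via Rokhlin's complex orientation formula is therefore exactly the route the paper points to, and your even-degree argument is clean and correct: the inequality $|l-k^2|\le 2(\Pi^++\Pi^-)\le l(l-1)$ forces $l\ge k$, and the extremal case $l=k$ forces $\Pi^+ =0$, $\Pi^-=\binom{k}{2}$, hence a full nest, after which B\'ezout gives hyperbolicity from the innermost disc.

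Your treatment of the odd-degree case, however, introduces a difficulty that is not there. You do \emph{not} need the pseudo-line to traverse the innermost disc. Once the Rokhlin-type formula for degree $2k+1$ (with the extra term $\Lambda^+-\Lambda^-$, bounded in absolute value by $l$) has forced the $k$ ovals into a nest, pick $p$ in the innermost disc. Any real line $\mathcal{L}$ through $p$ meets each oval in at least two points, giving $2k$ real intersections; and since both $\mathcal{L}$ and the pseudo-line represent the nontrivial class in $H_1(\pr^2(\R);\Z_2)$, their intersection number is odd, so $\mathcal{L}$ meets the pseudo-line in at least one real point. That already gives $\ge 2k+1=d$ real intersections, and B\'ezout finishes as before. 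So the ``main technical obstacle'' you flag --- extracting a geometric incidence of the pseudo-line with the innermost disc from the signed equality --- is unnecessary, and the odd case is no harder than the even one once you use this topological intersection argument.
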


Using in addition either \cite[Theorem 5.2]{helton2007linear} or \cite[Statement 3.6]{rokhlin1978complex}, we obtain the following corollary
\begin{corollary}
\label{CorHypRok}
Let $\mathcal{C}$ be a non-singular real algebraic curve of degree $d$ in $\pr^2$.
Then $\mathcal{C}$ is hyperbolic if and only if $\mathcal{C}$ is a dividing curve and $\mathcal{C}(\R )$ consists of exactly $\left\lfloor \frac{d}{2} \right\rfloor$ ovals (and a pseudo-line if $d$ is odd).
\end{corollary}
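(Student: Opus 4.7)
The plan is to obtain the corollary by combining the three cited topological results: Proposition~\ref{PropHypRok} (Orevkov) for the ``if'' direction, and the Helton--Vinnikov theorem together with Rokhlin's maximality criterion for the ``only if'' direction. No new geometric argument is needed; the task is purely to stitch together classical statements and to check the parity bookkeeping between ovals and the possible pseudo-line.

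For the ``if'' direction, suppose $\mathcal{C}$ is dividing with exactly $\lfloor d/2 \rfloor$ ovals (plus a pseudo-line when $d$ is odd). Write $d = 2k$ or $d = 2k+1$ so that $k = \lfloor d/2 \rfloor$. Apply Proposition~\ref{PropHypRok} with $l = k$: since $\mathcal{C}$ is dividing and the number of ovals equals the lower bound $k$, the proposition yields that $\mathcal{C}$ is hyperbolic. The only thing to check here is that the possible pseudo-line in the odd-degree case does not interfere with Orevkov's count, since the statement of Proposition~\ref{PropHypRok} refers only to ovals.

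For the ``only if'' direction, assume $\mathcal{C}$ is hyperbolic. By the Helton--Vinnikov theorem cited in the introduction (\cite[Theorem~5.2]{helton2007linear}), the real locus $\mathcal{C}(\R)$ consists of exactly $\lfloor d/2 \rfloor$ nested ovals together with a pseudo-line when $d$ is odd, which settles the component count. It remains to show that $\mathcal{C}$ is dividing; here I would invoke Rokhlin's result (\cite[Statement~3.6]{rokhlin1978complex}), which asserts that a non-singular real plane curve of degree $d$ carrying a nest of $\lfloor d/2 \rfloor$ ovals is of type~I, i.e.\ dividing. Combining the two facts concludes the proof.

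The main potential obstacle is simply a careful handling of parity. One must make sure that the ``nested'' configuration produced by Helton--Vinnikov is indeed the extremal configuration to which Rokhlin's maximality criterion applies, and that the pseudo-line in the odd-degree case does not alter Rokhlin's conclusion. Both checks are standard once the references are aligned; no computation is required beyond verifying that $l = k = \lfloor d/2 \rfloor$ in Proposition~\ref{PropHypRok} and that the nested ovals saturate Rokhlin's bound.
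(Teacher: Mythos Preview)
Your proposal is correct and matches the paper's approach: the paper does not write out a proof but simply states that the corollary follows from Proposition~\ref{PropHypRok} together with \cite[Theorem~5.2]{helton2007linear} or \cite[Statement~3.6]{rokhlin1978complex}, which is exactly the combination of references you invoke (Orevkov for the ``if'' direction, Helton--Vinnikov for the oval count and Rokhlin for dividingness in the ``only if'' direction).
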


The two conditions from \Cref{PropHypRok} have a combinatorial counterpart whenever the curve considered is near the non-singular tropical limit.
We start by recalling Haas' dividing criterion \cite{haas1997real}.

\begin{definition}
Let $C \subset \T \pr^2$ be a non-singular tropical curve. 
A cycle $\gamma$ of $C$ (seen as a graph) is called \emph{primitive} if it bounds a connected component of $\T \pr^2 \backslash C$.
\end{definition}

In the following, for $\gamma$ a cycle on a non-singular tropical curve $C$ (seen as a graph), we identify $\gamma$ with its underlying subset of bounded edges of $C$ (in particular, the cardinality $| \gamma |$ is the number of edges in $\gamma$).

\begin{theorem}[\cite{haas1997real}]
\label{ThDividing}
Let $\mathcal{C}:= (\mathcal{C}_t)_t$ be a real algebraic curve in $\pr_\K^2$ with tropicalisation the non-singular tropical curve $C \subset \T \pr^2$, and let $T$ be the admissible set of twisted edges of $C$ induced by $\mathcal{C}$.
Then for $t>0$ small enough, the curve $\mathcal{C}_t \subset \pr^2$ is dividing if and only if for any primitive cycle $\gamma$ in $C$, we have 
\begin{equation}
\label{EqDiv}
|\gamma  \cap T | = 0 \mod 2. 
\end{equation}
\end{theorem}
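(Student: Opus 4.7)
The plan is to reformulate the property of being \emph{dividing} as a homological condition on the complex curve and then evaluate the relevant classes combinatorially. Concretely, a non-singular real algebraic curve $\mathcal{C}_t$ is dividing if and only if $[\mathcal{C}_t(\R)] = 0$ in $\Ho_1(\mathcal{C}_t(\C); \Z_2)$, so I would reduce the theorem to computing $[\mathcal{C}_t(\R)]$ on a suitable $\Z_2$-basis of $\Ho_1(\mathcal{C}_t(\C); \Z_2)$. For $t > 0$ small enough, both the topology of $\mathcal{C}_t(\C)$ and the position of $\mathcal{C}_t(\R) \subset \mathcal{C}_t(\C)$ are governed by $C$ and its set of twisted edges $T$, the latter via Viro's patchworking theorem (\Cref{ThViro}), which is what makes such a combinatorial reduction possible.

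The basis I would use comes from the Viro--Mikhalkin topological model of $\mathcal{C}_t(\C)$: near each vertex of $C$ (dual to a triangle of $\Delta_C$) the curve contains a pair of pants; these are glued along a cylinder $A_e$ for every bounded edge $e$ of $C$, and the unbounded edges are capped off with disks coming from the toric boundary of $\pr^2_\K$. The resulting closed Riemann surface has genus equal to the number of interior lattice points of $\Delta_d$, which is also the first Betti number $b_1(C)$. For each primitive cycle $\gamma$ of $C$, I would construct a lift $\tilde\gamma \subset \mathcal{C}_t(\C)$ that crosses the core circle of the cylinder $A_e$ exactly once for each $e \in \gamma$. Since $C$ is a connected planar graph, its primitive cycles generate $\Ho_1(C; \Z_2)$; combined with the genus equality, the lifts $[\tilde\gamma]$ span $\Ho_1(\mathcal{C}_t(\C); \Z_2)$, so it suffices to compute the mod-$2$ intersection numbers $[\mathcal{C}_t(\R)] \cdot [\tilde\gamma]$.

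The key local computation proceeds edge by edge on each cylinder $A_e$. The real part $\mathcal{C}_t(\R) \cap A_e$ consists of two arcs whose configuration is dictated by the signs on the two triangles of $\Delta_C$ sharing the edge $e^\vee$. A case analysis using \Cref{DefTwist} and \Cref{PropTwistSign} (compare \Cref{amoebaedge1} and \Cref{amoebaedge2}) shows that these arcs intersect the core circle of $A_e$ with intersection number $1 \bmod 2$ exactly when $e$ is twisted, and $0 \bmod 2$ when $e$ is non-twisted. Summing over the edges of a primitive cycle gives
\begin{equation*}
[\mathcal{C}_t(\R)] \cdot [\tilde\gamma] \equiv |\gamma \cap T| \pmod 2,
\end{equation*}
so the vanishing of $[\mathcal{C}_t(\R)]$ in $\Ho_1(\mathcal{C}_t(\C); \Z_2)$ is equivalent to \Cref{EqDiv} for every primitive cycle $\gamma$, which is the desired equivalence.

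The main obstacle is the local step: one must make a consistent choice of cylinder and core circle on each $A_e$ and match the parity of the winding of the two real arcs against the combinatorial twist condition \emph{independently of the chart}. This amounts to choosing coordinates adapted to the Newton polygon of the local patchworking polynomial and comparing the resulting real arcs against \Cref{PropTwistSign}; it is the only genuinely analytic input, after which the global argument collapses to the intersection computation above and the standard homological characterisation of dividing curves.
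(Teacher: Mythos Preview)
The paper does not prove \Cref{ThDividing}; it is quoted as a result of Haas \cite{haas1997real} and used as a black box. So there is no proof in the paper to compare your proposal against.

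On the proposal itself: the overall strategy---reduce ``dividing'' to $[\mathcal{C}_t(\R)]=0$ in $\Ho_1(\mathcal{C}_t(\C);\Z_2)$ and compute this class combinatorially via the pair-of-pants decomposition---is the right one and is essentially how the result is established in the literature (see also the framework of \cite{renaudineau2018bounding}, which the paper invokes for \Cref{ThConnectHom}). However, two steps need repair.

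First, the lifts $[\tilde\gamma]$ of the $g$ primitive cycles cannot span $\Ho_1(\mathcal{C}_t(\C);\Z_2)$, which has dimension $2g$; your ``genus equality'' argument confuses $b_1(C)=g$ with $b_1(\mathcal{C}_t(\C))=2g$. The fix is standard: the core circles $[\delta_e]$ of the cylinders $A_e$ span a $g$-dimensional Lagrangian complementary to the span of the $[\tilde\gamma]$, and one checks separately that $[\mathcal{C}_t(\R)]\cdot[\delta_e]=0$ for every bounded edge $e$ (the two real arcs in $A_e$ each cross $\delta_e$ once). This forces $[\mathcal{C}_t(\R)]$ to lie in the span of the $[\delta_e]$, after which pairing with the $[\tilde\gamma]$ suffices.

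Second, your local computation is misformulated: the intersection number of the real arcs with the core circle $\delta_e$ is always $0\bmod 2$, independent of whether $e$ is twisted. What actually depends on the twist is the \emph{class} of $\mathcal{C}_t(\R)\cap A_e$ relative to the boundary: the two real arcs together are homologous (rel boundary) to $\delta_e$ when $e$ is twisted and to $0$ when $e$ is not. Equivalently, one shows $[\mathcal{C}_t(\R)]=\sum_{e\in T}[\delta_e]$ modulo the pair-of-pants relations, and then $[\mathcal{C}_t(\R)]\cdot[\tilde\gamma]=|\gamma\cap T|\bmod 2$ follows. With these two corrections your outline goes through.
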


\begin{definition}
We say that an admissible set of twisted edges $T$ on a non-singular tropical curve $C$ is \emph{dividing} if $T$ satisfies \Cref{EqDiv} for every primitive cycle in $C$. 
\end{definition}
%
%
Renaudineau and Shaw \cite[Theorem 7.2]{renaudineau2018bounding} gave a combinatorial way to compute the number of connected components of a real curve near the non-singular tropical limit, which can be reformulated as follows.

\begin{theorem}[\cite{renaudineau2018bounding}]
\label{ThConnectHom}
Let $(C,\E )$ be a non-singular real tropical curve of degree $d$ in $\T \pr^2$ with induced admissible set of twisted edges $T$.
For $\gamma_i , i= 1,\ldots ,g$ the primitive cycles on $C$, let $A_T$ be a symmetric $g\times g$-matrix with coefficients in $\Z_2$ defined as
\[ A_T := \left( |\gamma_i \cap \gamma_j \cap T | \mod 2 \right)_{i,j = 1,\ldots ,g} .   \] 
Then the number of connected components of the real part $\R C_\E$ is $1 + \dim \ker A_T$. 
\end{theorem}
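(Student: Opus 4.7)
The plan is to derive this statement as a reformulation of Renaudineau--Shaw \cite[Theorem 7.2]{renaudineau2018bounding}, which is already cited as the source. The original theorem asserts that the number of connected components of $\R C_{\E}$ equals $1 + \dim \ker \partial_T$, where $\partial_T$ is a $\Z_2$-linear map between certain tropical homology groups of $C$ associated with the set of twisted edges $T$. So the task reduces to showing that, after a careful choice of basis, the matrix of $\partial_T$ is exactly the matrix $A_T$ whose $(i,j)$-entry is $|\gamma_i \cap \gamma_j \cap T| \mod 2$.

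First I would identify the source and target of $\partial_T$. For a non-singular tropical curve $C$ of degree $d$ in $\T \pr^2$, the genus equals the number $g$ of primitive cycles (these are in bijection with the interior integer points of $\Delta_d$), and the primitive cycles $\gamma_1, \ldots, \gamma_g$ span the first $\Z_2$-homology $H_1(C;\Z_2)$ up to the unique linear relation coming from the fact that their sum bounds all of $\T \pr^2$. The key point is that the pairing by $\Z_2$-intersection on $C$, combined with the combinatorial description of $\partial_T$, is given on 1-cycles $\alpha, \beta$ by the formula $\langle \partial_T \alpha, \beta \rangle = |\alpha \cap \beta \cap T| \mod 2$. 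Taking $\alpha = \gamma_i$ and $\beta = \gamma_j$ immediately recovers $(A_T)_{ij}$.

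Next I would check the compatibility of dimensions. Because $A_T$ is symmetric with $\Z_2$-coefficients, $\ker A_T$ (viewed as a subspace of $\Z_2^g$) is in bijection with $\ker \partial_T$ under the identification of the source of $\partial_T$ with the span of the $\gamma_i$'s. Here I must be careful with the single linear relation $\sum_i \gamma_i = 0$ in $H_1(C;\Z_2)$: one verifies that this relation is automatically in $\ker A_T$ (it corresponds to a cycle bounding a chain away from $T$, using the twist-admissibility condition \Cref{EqAdm} together with \Cref{ThDividing}), so quotienting by it and then taking the kernel produces the same dimension as $\dim \ker A_T$.

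The main obstacle is the bookkeeping in step two: translating the abstract homological definition of $\partial_T$ in \cite{renaudineau2018bounding} into the concrete intersection count $|\gamma_i \cap \gamma_j \cap T|$. This requires checking that $\partial_T$, defined via a sign twist in the cellular chain complex of the real part $\R C_{\E}$ realized as a quotient of $C^*$, pairs with cycles precisely by counting shared twisted edges modulo $2$. Once this identification is made, the statement follows by linear algebra over $\Z_2$ and the main theorem of \cite{renaudineau2018bounding}.
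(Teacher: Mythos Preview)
The paper does not supply a proof of this theorem at all: it is stated with the citation \cite{renaudineau2018bounding} and, as the introduction says explicitly, it is ``a reformulation of \cite[Theorem 7.2]{renaudineau2018bounding}.'' So there is no proof in the paper to compare against; your sketch of how to extract the matrix $A_T$ from the homological map $\partial_T$ is exactly the kind of justification the paper leaves implicit.

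That said, your second paragraph contains a genuine error. You claim that the primitive cycles $\gamma_1,\ldots,\gamma_g$ satisfy a linear relation $\sum_i \gamma_i = 0$ in $H_1(C;\Z_2)$. They do not. A non-singular tropical curve of degree $d$ in $\T\pr^2$ is a planar graph whose first Betti number equals $g$, and the boundaries of the $g$ bounded complementary regions (the primitive cycles) form a \emph{basis} of $H_1(C;\Z_2)$. Their sum is the boundary of the unbounded region, which is a nontrivial cycle supported on edges adjacent to the unbounded faces, not zero. Consequently there is nothing to quotient by: the $g\times g$ matrix $A_T$ really is the matrix of the pairing $(\alpha,\beta)\mapsto |\alpha\cap\beta\cap T|\bmod 2$ in the basis $\gamma_1,\ldots,\gamma_g$, and $\dim\ker A_T = \dim\ker\partial_T$ directly, without any correction. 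Remove the discussion of the spurious relation and your derivation is clean.
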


Applying \Cref{CorHypRok}, \Cref{ThDividing} and \Cref{ThConnectHom}, we obtain the following proposition.

\begin{proposition}[\Cref{IntroPropHypTwist}]
\label{PropHypTwist}
Let $\mathcal{C}:= (\mathcal{C}_t)_t$ be a non-singular real algebraic curve in $\pr_\K^2$ with Newton polygon $\Delta_d$, let $C \subset \T \pr^2$ be the non-singular tropical curve of degree $d$ given as $C := \Trop (\mathcal{C})$, and let $T$ be the admissible set of twisted edges on $C$ induced by $\mathcal{C}$.
Then for $t>0$ small enough, the curve $\mathcal{C}_t$ is hyperbolic if and only if $T$ is dividing and $\dim \ker A_T = \left\lceil \frac{d}{2} \right\rceil - 1$.
\end{proposition}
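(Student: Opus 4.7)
The plan is to combine the three ingredients already recalled in the preceding subsection: Orevkov's topological criterion (\Cref{CorHypRok}), Haas' dividing criterion (\Cref{ThDividing}), and Renaudineau--Shaw's formula for the number of real connected components (\Cref{ThConnectHom}). The proposition is essentially a dictionary translating the two conditions of \Cref{CorHypRok} into combinatorial statements about $T$ and $A_T$.

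First I would apply \Cref{CorHypRok} to the non-singular real curve $\mathcal{C}_t$ (which is non-singular for $t>0$ small enough by the hypothesis on $(C,\E)$): it is hyperbolic if and only if $\mathcal{C}_t$ is dividing and $\mathcal{C}_t(\R)$ has exactly $\lfloor d/2 \rfloor$ ovals, plus a pseudo-line when $d$ is odd. In particular the total number of connected components of $\mathcal{C}_t(\R)$ equals $\lfloor d/2\rfloor$ when $d$ is even and $\lfloor d/2\rfloor+1$ when $d$ is odd; in both cases this number equals $\lceil d/2\rceil$.

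Next I would handle the two conditions separately. For the dividing condition, \Cref{ThDividing} gives exactly the equivalence with $T$ being dividing: indeed $T$ is, by definition, dividing precisely when $|\gamma\cap T|$ is even for every primitive cycle $\gamma$, which is the statement of Haas' theorem. For the oval count, \Cref{ThConnectHom} tells us that the total number of connected components of $\R C_\E$, which is identified with $\mathcal{C}_t(\R)$ via Viro's patchworking \Cref{ThViro}, equals $1+\dim\ker A_T$. Hence the condition ``$\mathcal{C}_t(\R)$ has $\lceil d/2\rceil$ connected components'' is equivalent to $\dim\ker A_T=\lceil d/2\rceil-1$.

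The only subtle point, and the one I would take care to justify, is that it is legitimate to read off the hyperbolicity test purely from the number of real components rather than from the separate counts of ovals and of the pseudo-line: by \Cref{CorHypRok} a non-singular dividing curve of degree $d$ always carries a pseudo-line exactly when $d$ is odd, so on a dividing curve the total component count automatically determines the oval count. Once this is observed, combining the two equivalences above yields the statement: $\mathcal{C}_t$ is hyperbolic $\iff$ $T$ is dividing and $1+\dim\ker A_T=\lceil d/2\rceil$, which is the claimed criterion. No genuinely hard step is involved; the work has been done in the quoted results.
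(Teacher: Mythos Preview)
Your proposal is correct and follows exactly the approach indicated in the paper, which simply cites \Cref{CorHypRok}, \Cref{ThDividing} and \Cref{ThConnectHom} without further detail. One minor point: the fact that a non-singular real plane curve of degree $d$ has a pseudo-line precisely when $d$ is odd is a general topological fact (the real locus represents the class $d \bmod 2$ in $H_1(\pr^2(\R);\Z_2)$) rather than a consequence of \Cref{CorHypRok}, but this does not affect the validity of your argument.
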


\begin{definition}
\label{DefTropHyperbo}
We say that a non-singular real tropical curve $(C,\E )$ of degree $d$ in $\T \pr^2$ is \emph{hyperbolic} if the admissible set of twisted edges $T$ on $C$ induced by $\E$ satisfies the conditions of \Cref{PropHypTwist}.
In that case, the pair $(\R \pr^2 , \R C_\E)$ is homeomorphic to a pair $(\pr^2 (\R), \mathcal{C} (\R ))$, for $\mathcal{C}$ a non-singular hyperbolic curve of degree $d$ in $\pr^2$.
\end{definition}

\begin{figure}
\centering
\begin{subfigure}[t]{0.45\textwidth}
	\centering
	\includegraphics[width=\textwidth]{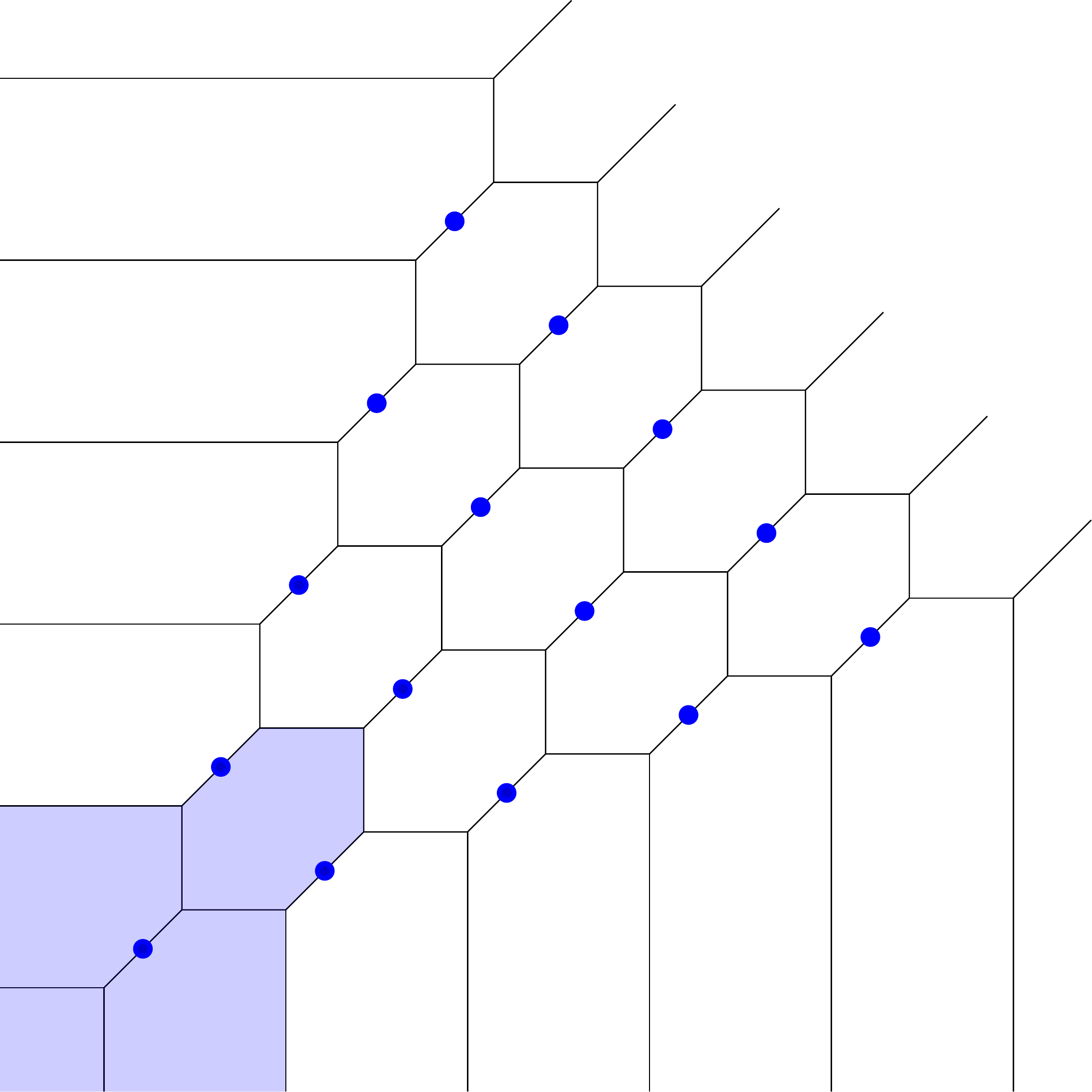}
	\caption{A tropical curve with hyperbolic set of twisted edges and its tropical hyperbolicity locus.}
	\label{FigHyperboHoney}
\end{subfigure}\hfill
\begin{subfigure}[t]{0.45\textwidth}
	\centering
	\includegraphics[width=\textwidth]{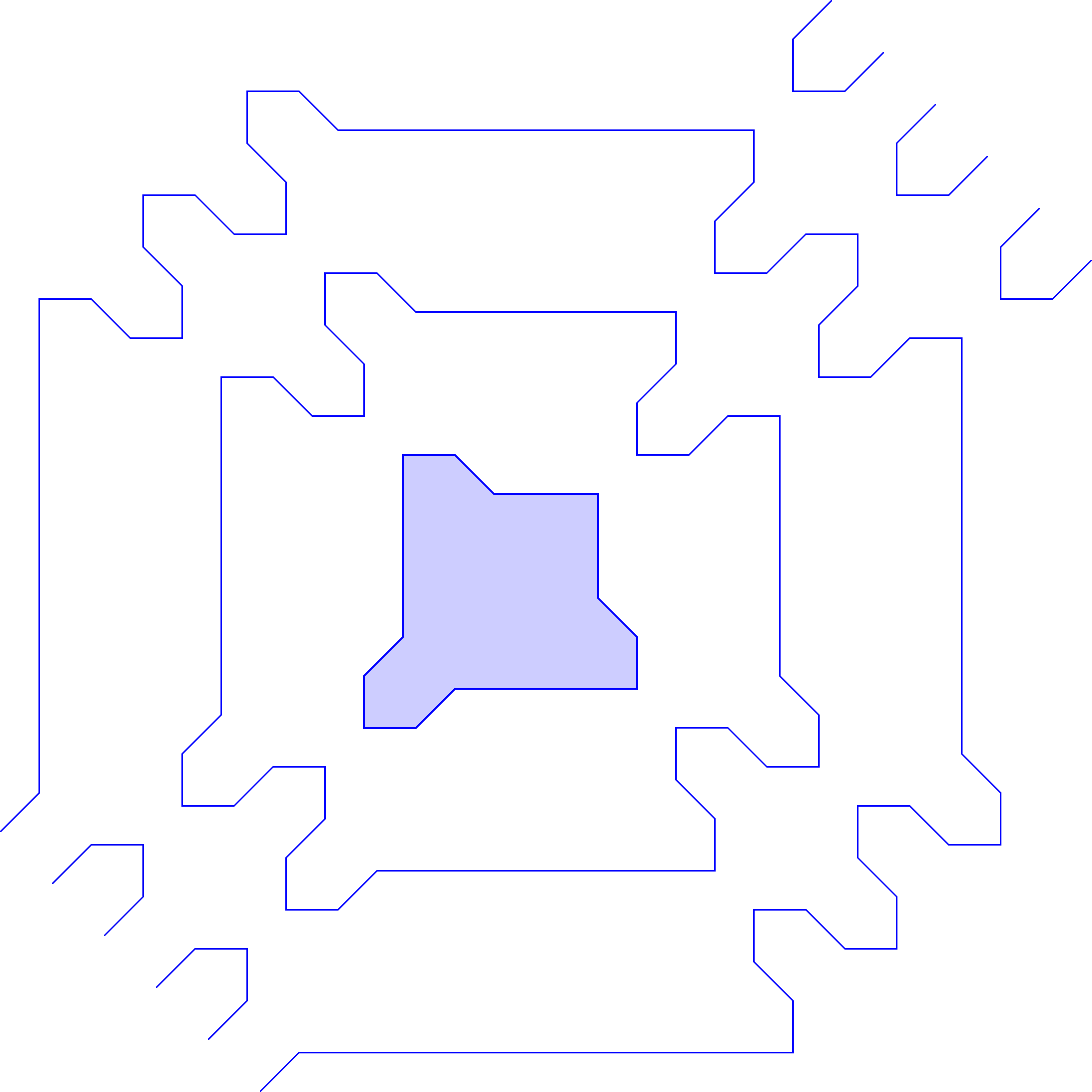}
	\caption{The real part of the tropical curve and its signed tropical hyperbolicity locus.}
	\label{FigHypLocus}
\end{subfigure}
\caption{\Cref{ExHyperbo1}.}
\label{Fighyperbo}
\end{figure}

In the following example and beyond, for $(C,\E )$ a non-singular real tropical curve of degree $d$ in $\T \pr^2$, an \emph{oval} of $(C,\E )$ is a connected component of the real part $\R C_\E$ disconnecting $\R \pr^2 = (\T \pr^2)^* /\sim$, and a \emph{pseudo-line} of $(C,\E)$ is a connected component of $\R C_\E$ which does not disconnect $\R \pr^2$.

\begin{example}
\label{ExHyperbo1}
Let $C$ be the non-singular tropical curve of degree $d=6$ in $\T \pr^2$ pictured in \Cref{Fighyperbo}.
Let $T$ be the (admissible) dividing set of twisted edges on $C$ marked by blue points in \Cref{FigHyperboHoney}.
The tropical curve $C$ has 10 primitive cycles $\{ \gamma_1 , \gamma_{10} \}$, ordered by the ``diagonals" of consecutive primitive cycles, starting from the bottom diagonal.
The corresponding size 10 matrix $A_T$ defined in \Cref{ThConnectHom} is a bloc diagonal matrix with each non-zero bloc associated to a ``diagonal" of consecutive primitive cycles meeting at a twisted edge.
The rank of $A_T$ is given as the sum of the rank of the non-zero blocs $A_i, i=1,\ldots ,4$.
We can compute that $\rank A_1 = 0, \rank A_2 = \rank A_3 = 2$ and $\rank A_4 = 4$.
Let $\E$ be a real phase structure on $C$ inducing the set of twisted edges $T$.
Then by \Cref{ThConnectHom}, the number of connected components of the real part $\R C_\E$ is
\begin{align*}
b_0 (\R C_\E ) & = 1 + \dim \ker A_T \\
& = 1+ 10 - \sum_i \rank A_i = 3 = \left\lceil \frac{d}{2} \right\rceil .
\end{align*}  
Then by \Cref{PropHypTwist}, the real tropical curve $(C,\E )$ is hyperbolic.
The real part $\R C_\E$ of the real tropical curve $(C,\E )$, consisting of 3 nested ovals, is pictured in \Cref{FigHypLocus}.
\end{example}

\subsection{The tropical hyperbolicity locus, spectrahedra and convexity}

\begin{definition}
\label{DefTropHypLocus}
Let $(C,\E )$ be a non-singular hyperbolic real tropical curve of degree $d$ in $\T \pr^2$.
The \emph{signed tropical hyperbolicity locus} $\R H \subset \R \pr^2$ of $(C,\E)$ is the interior of the innermost oval $\rho$ of $\R C_\E$ if $d\geq 2$, and is given as $\R \pr^2 \backslash \R C_\E$ if $d=1$.
The \emph{tropical hyperbolicity locus} $H \subset (\T \pr^2 \backslash C)$ of $(C,\E)$ is the set of connected components $D$ of $\T \pr^2 \backslash C$ such that there exists a symmetric copy $\varepsilon (D)$ in $\R H$.
\end{definition}

\begin{example}
Let $(C,\E )$ be the non-singular real tropical curve in \Cref{Fighyperbo}, with underlying tropical curve and set of twisted edges represented in \Cref{FigHyperboHoney} and with real part represented in \Cref{FigHypLocus}. 
The tropical hyperbolicity locus $H$ of $(C,\E)$ is represented in \Cref{FigHyperboHoney} in light blue, and the signed tropical hyperbolicity locus $\R H$ is pictured in \Cref{FigHypLocus} in light blue.
The connected component of $\T \pr^2 \backslash C$ bounded by a primitive cycle and belonging to $H$ has exactly one symmetric copy in $\R H$.
The two connected components of $\T \pr^2 \backslash C$ meeting exactly one 1-dimensional face of the stratification of $\T \pr^2$ have exactly two symmetric copies in $\R H$.
Finally, the connected component of $\T \pr^2 \backslash C$ containing a 0-dimensional face of the stratification of $\T \pr^2$ has all four of its symmetric copies lying in $\R H$.
\end{example}

\begin{proposition}
\label{PropHomeoHypLocus}
Let $(C,\E )$ be a non-singular hyperbolic real tropical curve of degree $d$ in $\T \pr^2$, and let $\mathcal{C} := (\mathcal{C}_t)_t$ be a realisation in $\pr_\K^2$ of $(C,\E )$.
The homeomorphism from \Cref{ThViro} sends the hyperbolicity locus of $\mathcal{C}_t$ to the signed tropical hyperbolicity locus of $(C,\E)$, for $t>0$ small enough.
\end{proposition}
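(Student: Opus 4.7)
The plan is to combine Viro's patchworking (\Cref{ThViro}) with the classical description of the hyperbolicity locus of a non-singular hyperbolic plane curve as the interior of its innermost oval. Since $(C,\E)$ is hyperbolic in the sense of \Cref{DefTropHyperbo}, \Cref{PropHypTwist} ensures that $\mathcal{C}_t$ is a non-singular hyperbolic curve of degree $d$ for $t > 0$ small enough. Let $\phi : \pr^2(\R) \to \R\pr^2$ denote the homeomorphism of pairs from \Cref{ThViro}, so that $\phi(\mathcal{C}_t(\R)) = \R C_\E$.

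The case $d=1$ is immediate: by \Cref{DefTropHypLocus} the signed tropical hyperbolicity locus is $\R\pr^2 \setminus \R C_\E$, while any real line is hyperbolic with respect to every real point off it, so both loci are the complements of the respective curves and $\phi$ maps one onto the other.

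For $d \geq 2$, the key step is the classical identification of the hyperbolicity locus of $\mathcal{C}_t$ with the interior of the innermost oval $\rho_t$ of $\mathcal{C}_t(\R)$. By Helton--Vinnikov, $\mathcal{C}_t(\R)$ consists of $\lfloor d/2 \rfloor$ nested ovals together with a pseudo-line if $d$ is odd. If $p$ lies in the interior of $\rho_t$, then $p$ is inside the disk bounded by each oval, so any real line $\mathcal{L}$ through $p$ must cross each of those ovals transversally in exactly two points, and when $d$ is odd must also meet the pseudo-line in an odd number of points (by parity of intersection in $\pr^2(\R)$); B\'ezout's theorem then forces the total count of real intersection points of $\mathcal{L}$ with $\mathcal{C}_t$ to be exactly $d$, so $p$ belongs to the hyperbolicity locus. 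Conversely, if $p$ lies outside $\rho_t$, the convex-body structure of the interior of $\rho_t$ in a suitable affine chart---afforded by the Helton--Vinnikov determinantal representation---produces a real line through $p$ avoiding $\rho_t$, and such a line has strictly fewer than $d$ real intersection points with $\mathcal{C}_t$, so $p$ is not in the hyperbolicity locus.

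To conclude, $\phi$ is a homeomorphism of pairs, so it sends connected components of the complement to connected components, preserves the decomposition of the real locus into ovals and pseudo-line, and respects the nesting relation between ovals (all being topological invariants of the pair). Hence $\phi(\rho_t)$ is precisely the innermost oval of $\R C_\E$ and its interior maps onto the interior of that innermost oval, which by \Cref{DefTropHypLocus} is the signed tropical hyperbolicity locus of $(C,\E)$. The most delicate step of the argument is the converse direction in the previous paragraph, where one must invoke the Helton--Vinnikov convex-body structure to produce a real line through a point outside $\rho_t$ that avoids $\rho_t$ altogether.
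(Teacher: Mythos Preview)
Your proof is correct and follows the same route as the paper: identify the hyperbolicity locus of $\mathcal{C}_t$ with the interior of the innermost oval (the paper simply cites \cite[Section 5]{helton2007linear} for this, whereas you re-derive it via B\'ezout and Helton--Vinnikov convexity), and then use that the homeomorphism of pairs from \Cref{ThViro} preserves the nesting structure and hence matches innermost oval to innermost oval. The extra detail you supply for the classical identification is not needed but does no harm.
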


\begin{proof}
By \cite[Section 5]{helton2007linear}, the hyperbolicity locus of the curve $\mathcal{C}_t$ is given as the interior of the innermost oval $\rho_t$ of $\mathcal{C}_t (\R )$ if $d>1$, and as $\pr^2 (\R ) \backslash \mathcal{C}_t (\R )$ if $d=1$.
We obtain the result by seeing that the homeomorphism of pairs from \Cref{ThViro} sends the interior of $\rho_t$ to the interior of the innermost oval of $\R C_\E$ if $d>1$, and it sends $\pr^2 (\R ) \backslash \mathcal{C}_t (\R )$ to $\R \pr^2 \backslash \R C_\E$ if $d=1$. 
\end{proof}

\begin{remark}
\label{RkTropConv}
Recall that the hyperbolicity locus $\mathcal{H}$ of a (non-singular) hyperbolic curve in $\pr^2$ is a convex set \cite[Property 5.3.(3)]{helton2007linear}.
We would like to see if the signed tropical hyperbolicity locus $\R H$ of $(C,\E )$ satisfies some tropical counterpart of the notion of convexity.
Given any two points $x,y \in \R H \subset \R \pr^2$, there exist a (not necessarily unique) real tropical line $(L,\E ')$ with real part $\R L_{\E '}$ going through $x$ and $y$.
By \Cref{PropHomeoHypLocus}, for every real tropical line $(L,\E ')$ with real part going through $x$ and $y$, the intersection $\R L_{\E '} \cap \R H$ is a connected 1-dimensional chain.
If $x \neq y$, the union 
\[ S_{x,y}:= \bigcup\limits_{ (L,\E')} \R L_{\E '} \cap \R H , \]
for $(L,\E ')$ running through the real tropical lines with real part containing $x$ and $y$, is a \emph{signed tropical line segment}, in the sense of \cite[Example 3.11]{loho2019signed}.
Then for every couple of points $(x,y) \in (\R H)^2$, the signed tropical line segment $S_{x,y}$ defined as above is contained in $\R H$.
Hence the set $\R H$ is \emph{signed tropically convex}, in the sense of \cite{loho2019signed}.
However, the closure $\overline{\R H}$ is not necessarily signed tropically convex. 
Indeed, if there exist two points $x,y \in \partial (\R H)$ in non-general position (ie.~ there exist several real tropical lines through $x$ and $y$), then for $(L,\E ')$ a real tropical line with real part going through $x$ and $y$, the intersection $\R L_{\E '} \cap \overline{\R H}$ is not necessarily connected. 
For instance, in \Cref{Fighyperbo}, take two points $x',y' \in \T \pr^2$ on the twisted edge lying in the interior of $\overline{H}$, and consider the symmetric copy $x$ of $x'$ in the top-left orthant and the symmetric copy $y$ of $y'$ in the bottom-right orthant. 
The two points $x$ and $y$ lie on the boundary of $\overline{\R H}$, and we have the intersection $\R L_{\E '} \cap \overline{\R H}$ disconnected for some $\R L_{\E '}$ going through $x, y$ and the positive orthant of $\R \pr^2$.
Loho and Skomra, in a work in preparation \cite{loho2021signed}, define a relaxed version of signed tropical convexity, where the signed tropical line segments $S_{x,y}$ are given as intersection over all real tropical lines through $x,y$ instead of the union over all real tropical lines through $x,y$.
In that case, the closure $\overline{\R H}$ should satisfy the conditions in order to be relaxed signed tropically convex.
\end{remark}

We want to define the notion of non-singular real tropical curve hyperbolic with respect to a \emph{real tropical point}.
A real tropical point in $\T \pr^2$ is a couple $(v,\varepsilon ) \in \T \pr^2 \times \mathcal{R}^2$ with real part the point $\varepsilon (v) \in \R \pr^2$.
A \emph{realisation} of a real tropical point $(v,\varepsilon )$ is a point $p \in \pr^2_\K (\R )$ with tropicalisation $v$ lying in (the closure of) the orthant $\varepsilon$ of $\pr^2_\K (\R )$.

\begin{definition}
\label{DefTropHyperboPoint}
Let $(C,\E)$ be a non-singular real tropical curve of degree $d$ in $\T \pr^2$.
We say that $(C,\E )$ is hyperbolic with respect to the real tropical point $(v,\varepsilon)$, with $v\in \T \pr^2  \backslash C$ and $\varepsilon \in \mathcal{R}^2$, if for every realisation $\mathcal{C} \subset \pr^2_\K$ of $(C,\E )$, the curve $\mathcal{C}$ is hyperbolic with respect to all realisations of $(v,\varepsilon)$.
\end{definition}

\begin{proposition}
\label{PropEquivHyp}
A non-singular real tropical curve $(C,\E)$ of degree $d$ in $\T \pr^2$ is hyperbolic if and only if there exists a real tropical point $(v,\varepsilon)$ such that $(C,\E)$ is hyperbolic with respect to $(v,\varepsilon)$.
\end{proposition}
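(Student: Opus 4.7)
The plan is to bootstrap the two directions from the twisted-edge criterion (Proposition~\ref{PropHypTwist}) together with the identification of the hyperbolicity locus provided by Proposition~\ref{PropHomeoHypLocus}. No new calculation is really needed: all the content lives in translating between the tropical and classical notions.

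For the ``if'' direction, I would suppose that there exists a real tropical point $(v,\varepsilon)$ with $v \in \T\pr^2 \setminus C$ such that $(C,\E)$ is hyperbolic with respect to $(v,\varepsilon)$, fix any realisation $\mathcal{C} = (\mathcal{C}_t)_t$ of $(C,\E)$, and fix any realisation $p = (p_t)_t$ of $(v,\varepsilon)$. By Definition~\ref{DefTropHyperboPoint}, the curve $\mathcal{C}_t$ is hyperbolic with respect to $p_t$ for $t>0$ small enough, hence hyperbolic in the classical sense. Proposition~\ref{PropHypTwist} then forces the induced set of twisted edges $T$ on $C$ to be dividing with $\dim \ker A_T = \lceil d/2 \rceil - 1$, which is precisely the combinatorial hyperbolicity of $(C,\E)$ in the sense of Definition~\ref{DefTropHyperbo}.

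For the ``only if'' direction, assume $(C,\E)$ is hyperbolic. By Definition~\ref{DefTropHyperbo}, the signed tropical hyperbolicity locus $\R H$ is a non-empty open subset of $\R \pr^2$: either the interior of the innermost oval of $\R C_\E$ if $d \geq 2$, or the complement $\R\pr^2 \setminus \R C_\E$ if $d=1$. I would pick a point $w \in \R H$ in general position, in particular not on the toric boundary nor on $\R C_\E$, and write $w = \varepsilon(v)$ with $v \in \T\pr^2 \setminus C$ and $\varepsilon \in \mathcal{R}^2$. The claim to check is that $(C,\E)$ is hyperbolic with respect to the real tropical point $(v,\varepsilon)$. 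To verify this, let $\mathcal{C}$ be any realisation of $(C,\E)$ and $p$ any realisation of $(v,\varepsilon)$. Proposition~\ref{PropHypTwist} yields that $\mathcal{C}_t$ is classically hyperbolic for $t>0$ small, and Proposition~\ref{PropHomeoHypLocus} identifies its hyperbolicity locus with $\R H$ under the Viro homeomorphism. It remains to observe that the image of $p_t$ under the Viro homeomorphism lies in an arbitrarily small neighbourhood of $\varepsilon(v) = w$ as $t \to 0^+$; since $w \in \R H$ and $\R H$ is open, $p_t$ belongs to the hyperbolicity locus of $\mathcal{C}_t$ for $t>0$ small enough, and $\mathcal{C}_t$ is hyperbolic with respect to $p_t$ by definition.

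The only subtle point is the last one: the convergence of realisations of the real tropical point $(v,\varepsilon)$ to $\varepsilon(v)$ under the Viro identification. This is where I expect the main (minor) obstacle to lie, and it would be handled by invoking the compatibility of the homeomorphism of Theorem~\ref{ThViro} with the log map $\Log_t$ on the real torus, together with the fact that $v \notin C$ keeps $\varepsilon(v)$ in the interior of $\R H$, so that the openness argument goes through without having to worry about behaviour near $\R C_\E$ or the toric boundary strata.
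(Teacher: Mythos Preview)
Your proposal is correct and follows essentially the same route as the paper: the key input for the ``only if'' direction is Proposition~\ref{PropHomeoHypLocus}, identifying the classical hyperbolicity locus with $\R H$ under the Viro homeomorphism, and then picking any real tropical point with real part in $\R H$. The paper's proof is considerably terser---it treats the ``if'' direction as immediate and does not spell out the convergence of realisations of $(v,\varepsilon)$ to $\varepsilon(v)$ under the Viro identification---so your extra care on both of these points is welcome rather than a divergence.
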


\begin{proof}
Recall that a non-singular real algebraic curve $\mathcal{C}$ of degree $d$ in $\pr_\K^2$ is hyperbolic with respect to a point $p \in \pr_\K^2 (\R ) \backslash \mathcal{C} (\R )$ if every real line $\mathcal{L}$ through $p$ in $\pr_\K^2$ intersects $\mathcal{C}$ in $d$ distinct real points $p_1 , \ldots , p_d$. 
Then by \Cref{PropHomeoHypLocus}, for every realisation $\mathcal{C}\subset \pr^2_\K$ of a non-singular hyperbolic tropical curve $(C,\E)$ of degree $d$ in $\pr^2$, the curve $\mathcal{C}$ is hyperbolic with respect to every realisation $p$ of each real tropical point with real part in the signed tropical hyperbolicity locus $\R H$ of $(C,\E)$.
\end{proof}

\begin{corollary}
\label{CorTropicaliseHypLoc}
For $\mathcal{C}$ a realisation of a non-singular hyperbolic tropical curve $(C,\E)$ of degree $d$ in $\T \pr^2$, we have an inclusion $H \subset \Trop (\mathcal{H})$ of the tropical hyperbolicity locus $H$ inside the tropicalisation of the hyperbolicity locus $\mathcal{H}$ of $\mathcal{C}$, and we have equality of closures $\overline{H} = \overline{\Trop (\mathcal{H})} = \Trop (\overline{\mathcal{H}})$.
\end{corollary}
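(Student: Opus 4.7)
The plan is to factor the tropicalization through $\R\pr^2$. For any real Puiseux point $p \in \pr^2_\K(\R)$ near the tropical limit and away from the coordinate hyperplanes, the assignment $p \mapsto (\Trop(p), \sign(p)) \in \R\pr^2 = (\T\pr^2)^*/\!\sim$, composed with the forget-sign quotient $\pi \colon \R\pr^2 \to \T\pr^2$, recovers $\Trop$. By \Cref{ThViro} combined with \Cref{PropHomeoHypLocus}, this first map identifies $\mathcal{H}$ with $\R H$ for $t>0$ sufficiently small.

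For the inclusion $H \subset \Trop(\mathcal{H})$, given $v$ in a connected component $D$ of $H$, pick $\varepsilon \in \mathcal{R}^2$ with $\varepsilon(D) \subset \R H$. Then the real Puiseux point $p = ((-1)^{\varepsilon_1} t^{-v_1}, (-1)^{\varepsilon_2} t^{-v_2})$ (written in an affine chart, with obvious modifications on boundary strata of $\T\pr^2$) satisfies $\Trop(p) = v$ and is sent to $\varepsilon(v) \in \R H$, so $p \in \mathcal{H}$ and $v \in \Trop(\mathcal{H})$.

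For the equality of closures, the previous step gives $\overline{H} \subset \overline{\Trop(\mathcal{H})}$. The factorization also yields $\Trop(\mathcal{H}) \subset \pi(\R H)$, and the open set $\pi(\R H) \subset \T\pr^2$ differs from $H$ only on certain edges $e$ of $C$ admitting a symmetric copy $\varepsilon(e)$ with $\varepsilon \notin \E_e$ contained in $\R H$. Since $\R H$ is open in $\R\pr^2$, such an $\varepsilon(e)$ necessarily has a two-dimensional neighbourhood in $\R H$, and that neighbourhood meets the symmetric copies of both components of $\T\pr^2 \setminus C$ adjacent to $e$; so those components lie in $H$, forcing $e \subset \overline{H}$. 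Hence $\Trop(\mathcal{H}) \subset \overline{H}$ and $\overline{\Trop(\mathcal{H})} = \overline{H}$.

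Finally, for $\Trop(\overline{\mathcal{H}}) = \overline{\Trop(\mathcal{H})}$, repeat the same argument with $\overline{\R H}$, the closed innermost disk, which corresponds to $\overline{\mathcal{H}}$ via continuity of the Viro homeomorphism; the boundary points of $\overline{\mathcal{H}}$ sit on the innermost oval of $\mathcal{C}(\R)$ and tropicalize exactly to $\partial H \cap C$. The main obstacle is the sandwiching $H \subset \Trop(\mathcal{H}) \subset \overline{H}$ in the middle step: one has to check that no one-dimensional sliver $\varepsilon(e)$ can sit in $\R H$ without dragging a two-dimensional neighbourhood along with it, which is the openness of $\R H$ in $\R\pr^2$, together with the compatibility between the patchworking boundary of $\R H$ and the adjacency of edges and faces in $\T\pr^2$.
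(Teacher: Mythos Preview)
The paper states this corollary immediately after \Cref{PropEquivHyp} with no proof, so there is no strategy to compare against; it is treated as a direct consequence of \Cref{PropHomeoHypLocus} and the proof of \Cref{PropEquivHyp}. Your write-up is therefore more detailed than anything in the paper, and the first inclusion $H \subset \Trop(\mathcal{H})$ is argued correctly and in exactly the spirit the paper intends.

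There is, however, a genuine gap in your sandwiching step. You assert that the signed tropicalisation $p \mapsto (\Trop(p),\sign(p))$ ``identifies $\mathcal{H}$ with $\R H$'' and then use this to get $\Trop(\mathcal{H}) \subset \pi(\R H)$. This fails whenever $\Trop(p)$ lands on $C$. If $\Trop(p)$ is a vertex of $C$, all four symmetric copies of that vertex lie on $\R C_\E$ (each of the three non-zero elements of $\Z_2^2$ is the direction of one incident edge, and the real phase structure around a $3$-valent vertex covers every orthant), so the signed tropicalisation of $p$ is never in the open set $\R H$. Already for a tropical line this happens: $\mathcal{H} = \pr^2_\K(\R)\setminus \mathcal{C}(\R)$ contains points tropicalising to the vertex, yet that vertex is not in $\pi(\R H)$. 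Likewise, if $\Trop(p)$ lies in the interior of an edge $e$ and $\sign(p)\in \E_e$, the signed tropicalisation lands on $\R C_\E$, not in $\R H$. So neither the phrase ``identifies $\mathcal{H}$ with $\R H$'' nor the inclusion $\Trop(\mathcal{H})\subset \pi(\R H)$ is correct as stated, and your subsequent analysis of $\pi(\R H)\setminus H$ (which only discusses edges) does not cover these cases.

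The repair is to restrict the signed-tropicalisation argument to points with $\Trop(p)\notin C$: for those, the patchworking construction does guarantee that the signed tropicalisation lies in the same connected component of $\R\pr^2\setminus\R C_\E$ as the image of $p_t$ under the Viro homeomorphism, hence in $\R H$. This yields $\Trop(\mathcal{H})\setminus C \subset H$, and together with the first inclusion actually $\Trop(\mathcal{H})\setminus C = H$. It remains to show that $\Trop(\mathcal{H})\cap C \subset \overline{H}$, i.e.\ that no point of $\mathcal{H}$ tropicalises to a cell of $C$ that is not adjacent to a component of $H$; this is where one genuinely needs either an argument via connectedness of $\Trop(\mathcal{H})$ in the valuation topology together with the fact that $C$ is one-dimensional, or a direct verification that for such cells every real lift admits a real line missing the required number of real intersection points. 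Your final paragraph gestures at the difficulty but does not supply this step.
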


The first inclusion in \Cref{CorTropicaliseHypLoc} may be strict, for instance in the case where $(C,\E)$ is a real tropical line in $\T \pr^2$.

\begin{remark}
\label{RkTropSpec}
Let $\mathcal{H} \subset \pr^2 (\R)$ (or $\mathcal{H} \subset \pr_\K^2 (\R)$) be the hyperbolicity locus of a hyperbolic curve $\mathcal{C}$ in $\pr^2$ (or $\mathcal{C}$ in $\pr_\K^2$).
Recall that the set $\mathcal{H}$ is a \emph{spectrahedron}, as $\mathcal{C}$ admits a symmetric determinantal representation $D$ such that $D(x)$ is positive semi-definite for every point $x\in \mathcal{H}$ \cite[Theorem 2.2]{helton2007linear}.
Then by \Cref{PropEquivHyp}, for $(C,\E)$ a hyperbolic non-singular real tropical curve with tropical hyperbolicity locus $\R H$, the closure $\overline{H}$ of the tropical hyperbolicity locus is a \emph{tropical spectrahedron}, ie.~ the tropicalisation of a spectrahedron, as defined in \cite[Definition 5.1]{allamigeon2020tropical}.
\end{remark}

\subsection{Intersection criterion for the hyperbolicity locus}
\label{SecHypIntersection}

We want to give a criterion, based on the intersection tools of \Cref{SectionIntersection}, in order to determine if a non-singular real tropical curve is hyperbolic with respect to a real tropical point.  
We define a \emph{polyhedral subdivision} of $\T \pr^2$ with respect to a point $v$, which will allow to parameterise the pencil of tropical lines going through $v$.

\begin{definition}
\label{DefFanPt}
Let $v$ be a point in $\T \pr^2 \backslash \partial \T \pr^2$.
We construct a \emph{polyhedral subdivision} $\Sigma_v$ of $\T \pr^2$ \emph{with respect to} $v$ (see \Cref{SubdivisionPoint}) by subdividing $\T \pr^2$ into three 2-dimensional faces $\sigma_\eta$, with $\eta \in \{ (1,0) , (0,1) , (1,1) \} \subset \Z^2$, such that
\begin{itemize}
\item the faces $\tau_{1,1} , \tau_{1,0} , \tau_{0,1}$ are half-lines of origin $v$ and outward primitive integer direction $(-1,-1),(1,0),(0,1)$, respectively;
\item two faces $\sigma_\eta , \sigma_\zeta$ intersect in a 1-dimensional face $\tau_\xi$, for \[ \{ \eta , \zeta , \xi \} = \{ (1,0) , (0,1) , (1,1) \} . \]
\end{itemize} 
\end{definition}

To each point $u$ lying on a 1-dimensional face $\tau_\eta$ of $\Sigma_v$, we can associate a tropical line $L_u$ with 3-valent vertex $u$.
Then the pencil of tropical lines through the point $v$ is given as the set of all tropical lines $L_u$ defined this way. 
\begin{figure}
\centering
\begin{subfigure}[t]{0.45\textwidth}
	\includegraphics[width=\textwidth]{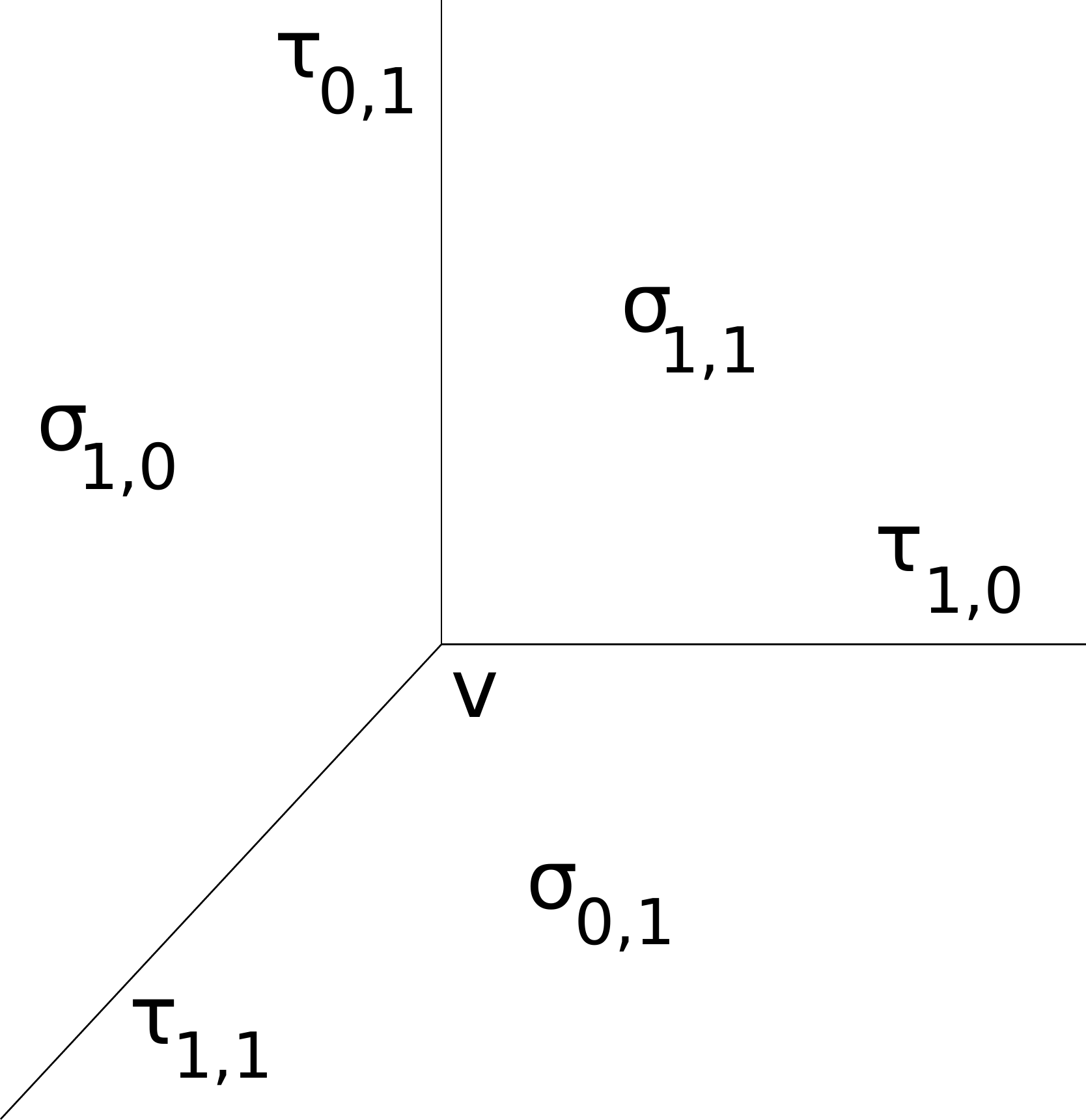}
	\caption{Polyhedral subdivision $\Sigma_v$ of $\T \pr^2$ from \Cref{DefFanPt}.}
	\label{SubdivisionPoint}
\end{subfigure}\hfill
\begin{subfigure}[t]{0.45\textwidth}
	\centering
	\includegraphics[width=\textwidth]{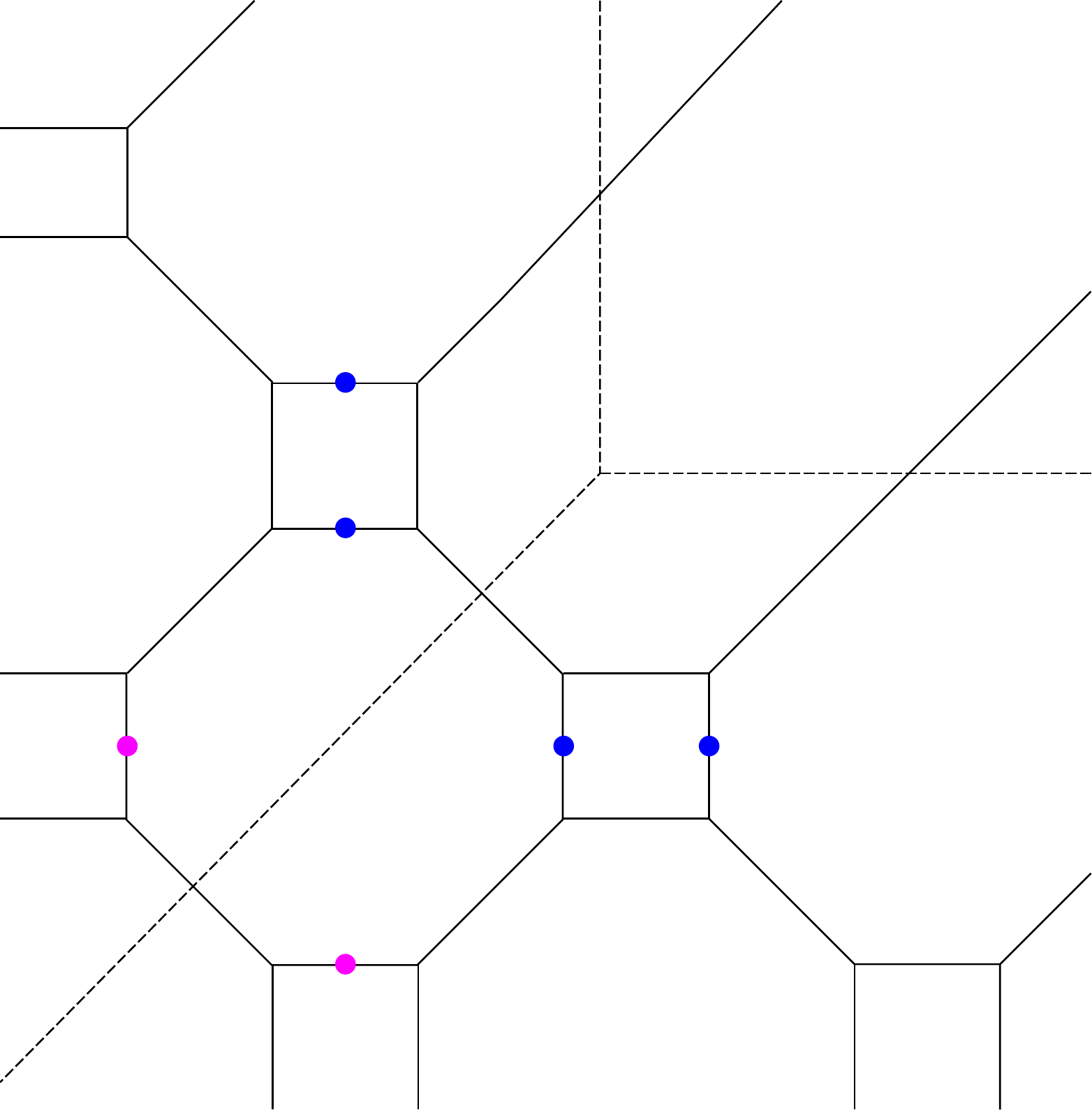}
	\caption{Non-singular real tropical curve hyperbolic with respect to a real tropical point from \Cref{ExHyperboQuart}.}
	\label{FigHyperboQuart}
\end{subfigure}
\caption{}
\label{FigPolyhedralSubdivision}
\end{figure}
A point $v$ is \emph{generic with respect to} a non-singular tropical curve $C$ if the half-lines $\tau_\eta$ of the polyhedral subdivision $\Sigma_v$ of $\T \pr^2$ with respect to $v$ intersect $C$ transversely in interior of edges. 
We can now characterise hyperbolicity with respect to a real tropical point thanks to the results of \Cref{SectionIntersection}.

\begin{theorem}[\Cref{IntroTh4.4.4}]
\label{Th4.4.4}
Let $(C,\E )$ be a non-singular real tropical curve of degree $d$ in $\T \pr^2$, let $v'$ be a point of $\T \pr^2 \backslash C$ and let $\varepsilon \in \mathcal{R}^2$.
Then $(C,\E)$ is hyperbolic with respect to the real tropical point $(v',\varepsilon)$ if and only if for $\Sigma_v$ the subdivision of $\T \pr^2$ with respect to a point $v$ generic with respect to $C$ in the same connected component of $\T \pr^2 \backslash C$ as $v'$, we have:
\begin{enumerate}
\item \label{ItemHyp1} Every vertex of $C$ lying in the interior of a face $\sigma_\eta$ of $\Sigma_v$ is incident to an edge of primitive integer direction $\eta$.

\item \label{ItemHyp2} Every edge $e$ of $C$ intersecting a face $\tau_\zeta$ of $\Sigma_v$ and such that $|\det (\overrightarrow{e} | \zeta )| = 2$, for $\overrightarrow{e}$ the primitive integer direction of $e$, satisfies $\varepsilon \in \E_e$.

\item \label{ItemHyp3} For every bounded edge $e$ of $C$ of primitive integer direction $\eta$ intersecting the face $\sigma_\eta$ of $\Sigma_v$, the edge $e$ is twisted if $e \subset \sigma_\eta$, and otherwise the segment $e\cap \sigma_\eta$ is relatively non-twisted with respect to the unique real tropical line $(L,\E ')$ through $(v,\varepsilon)$ with $(e\cap \sigma_\eta ) \subset e' \subset L$ and $\E_e = \E_{e'}'$. 
%
\end{enumerate}
\end{theorem}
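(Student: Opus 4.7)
The plan is to reduce hyperbolicity with respect to $(v', \varepsilon)$ to a statement about the pencil of real tropical lines through $(v, \varepsilon)$ and then compute each kind of intersection component using the results of \Cref{SectionIntersection}. By \Cref{PropHomeoHypLocus} and \Cref{CorTropicaliseHypLoc}, $(C, \E)$ is hyperbolic with respect to $(v', \varepsilon)$ if and only if for every real tropical line $(L, \E')$ through $(v, \varepsilon)$, every realization of $(L, \E')$ meets every realization of $(C, \E)$ in exactly $d$ real points counted with multiplicity. Since $v$ and $v'$ lie in the same connected component of $\T \pr^2 \setminus C$, the combinatorial intersection pattern of the pencil with $C$ is unchanged between the two points, so one may work with the generic point $v$ throughout.

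I would then parameterize the pencil of tropical lines through $v$ via $\Sigma_v$: either $L = L_v$ has its vertex at $v$ and its three edges coincide with $\tau_{(1,0)}, \tau_{(0,1)}, \tau_{(1,1)}$, or $L = L_u$ has its vertex $u$ on a half-line inside some $\sigma_\eta$ emanating from $v$ in the direction opposite to $\tau_\eta$, in which case the edge of $L_u$ through $v$ extends along $\tau_\eta$ and the two remaining edges of $L_u$ are contained in $\sigma_\eta$. The real phase structure $\E'$ on $L$ compatible with $\varepsilon$ is fixed on the edge through $v$ by $\varepsilon \in \E'_{e'}$, and \Cref{RkRealPhase} provides exactly two extensions to the other two edges at the vertex of $L$.

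Given this parameterization, for each $(L, \E')$ in the pencil I would decompose $L \cap C$ into transverse intersection points on some $\tau_\zeta$, transverse intersection points in the interior of the $\sigma_\eta$ containing the vertex of $L$, non-transverse edge-in-edge components (treated by \Cref{Th4.3.9}), non-transverse segment-in-segment components (treated by \Cref{Th4.3.10}), and tropical intersection points at 3-valent vertices of $C$ sitting on an edge of $L$, with multiplicity equal to $\max_i |\det(\eta_L | \eta_i)|$ where $\eta_L$ is the direction of $L$'s edge through the vertex and $\eta_i$ are the primitive edge directions of $C$ at the vertex. By \Cref{PropCpxMult} and Bezout, the sum of these tropical multiplicities is $d$, so hyperbolicity forces every component to achieve its maximal count of real points. \Cref{CorTransMult2} identifies when this is possible for transverse components (always for multiplicity $1$, only if $\varepsilon \in \E_e$ for multiplicity $2$, and never for multiplicity $\geq 3$), while \Cref{Th4.3.9} and \Cref{Th4.3.10} handle the non-transverse ones in terms of (relative) twists. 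Matching each type of component to the theorem: condition~(2) captures the multiplicity-$2$ phase requirement on $\tau_\zeta$; condition~(3) captures the twist or relative-twist requirement on edges of direction $\eta$ in $\sigma_\eta$ through the unique real tropical line containing them; and condition~(1), combined with balancing at 3-valent vertices of $C$ and the max-formula above, is exactly the combinatorial condition that rules out multiplicity-$\geq 3$ transverse intersections arising from the two inner edges of an $L_u$ sweeping across $\sigma_\eta$.

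For the converse, I would show that the failure of any one of conditions (1)--(3) produces a real tropical line $(L, \E')$ through $(v, \varepsilon)$ admitting realizations with a complex conjugate pair of intersection points with some realization of $(C, \E)$: failure of (1) by exhibiting a line $L_u$ with a transverse multiplicity-$\geq 3$ intersection, failure of (2) by applying \Cref{CorTransMult2} to $L_v$ directly, and failure of (3) by invoking \Cref{CorNonTwist} or \Cref{CorNonRelTwist}. The principal obstacle is the matching step for condition~(1): one must verify by a sweep analysis that the two inner edges of $L_u$, as $u$ moves along its ray in $\sigma_\eta$, transversely cross edges of $C$ adjacent to vertices in $\sigma_\eta$ with multiplicity at most $2$ if and only if every such vertex carries an edge of direction $\eta$; this requires a careful case analysis based on primitivity and balancing of edge directions at each 3-valent vertex.
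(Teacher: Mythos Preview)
Your approach is essentially the same as the paper's: reduce to a generic $v$ in the same component, run the pencil of real tropical lines through $(v,\varepsilon)$, classify the intersection components, and apply \Cref{CorTransMult2}, \Cref{Th4.3.9}, \Cref{Th4.3.10}. The paper isolates your ``sweep analysis / matching step'' for condition~(1) as a standalone lemma (\Cref{LemDirEdge}): a $3$-valent vertex of a non-singular tropical curve has an edge of direction $\eta$ if and only if every incident edge $e$ satisfies $|\det(\overrightarrow{e}\,|\,\eta)|\le 1$. Your vertex-multiplicity formula $\max_i|\det(\eta_L\,|\,\eta_i)|$ is correct and equivalent to the paper's perturbation argument.

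Two points to tighten. First, your opening reduction via \Cref{PropHomeoHypLocus} and \Cref{CorTropicaliseHypLoc} is circular, since both presuppose that $(C,\E)$ is already hyperbolic; the passage from $v'$ to a generic $v$ in the same component should instead come directly from \Cref{DefTropHyperboPoint} together with the fact that the hyperbolicity locus of any realisation is a union of components of $\pr^2(\R)\setminus\mathcal{C}(\R)$. Second, and more substantively, you write that condition~(1) ``rules out multiplicity-$\ge 3$ transverse intersections'' inside $\sigma_\eta$, but in fact it must rule out multiplicity $\ge 2$ there: the two inner edges of $L_u$ do not contain $v$, so the real phase $\E'$ on them is \emph{not} fixed by $\varepsilon$, and one of the two admissible choices always gives $\E_e\neq\E'_{e'}$, producing complex conjugate points by \Cref{CorTransMult2}. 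The paper's proof makes exactly this point, and \Cref{LemDirEdge} shows condition~(1) is equivalent to $|\det(\overrightarrow{e}\,|\,\eta)|\le 1$ for all edges meeting $\sigma_\eta$, i.e.\ to ruling out multiplicity $\ge 2$ there. The bound $|\det(\overrightarrow{e}\,|\,\zeta)|\le 2$ for edges crossing $\tau_\zeta$ is then a consequence of condition~(1) applied to the two endpoints of $e$ in the adjacent faces, which is why only condition~(2) is needed on $\tau_\zeta$.
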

%
%

\begin{example}
\label{ExHyperboQuart}
Let $C$ be the non-singular tropical curve of degree 4 in $\T \pr^2$ represented in \Cref{FigHyperboQuart} in plain black.
Let $\E$ be a real phase structure on $C$ inducing the set of twisted edges marked by blue and purple points.
Let $\Sigma_v$ be the polyhedral subdivision of $\T \pr^2$ with 1-dimensional faces represented in dotted black.
Then each 3-valent vertex of $C$ satisfies \Cref{ItemHyp1} of \Cref{Th4.4.4} with respect to the point $v$. 

Let $e_1$ and $e_2$ be the two edges of direction $(1,-1)$ in the biggest primitive cycle of $C$.
According to the twisted edges on that cycle, we can see that $\E_{e_1} = \E_{e_2}$.
Let $\varepsilon \in \mathcal{R}^2$ be an element contained in $\E_{e_1}$ and $\E_{e_2}$.
Then $(C,\E )$ satisfies \Cref{ItemHyp2} of \Cref{Th4.4.4} with respect to the real tropical point $(v,\varepsilon )$.

Finally, we can see that every bounded edge of direction $\eta$ lying in the interior of $\sigma_\eta$ is twisted.
Those edges are the ones marked by blue points on \Cref{FigHyperboQuart}, and the edges marked by purple points are twisted in order to satisfy twist-admissibility (\Cref{EqAdm}) and dividing (\Cref{EqDiv}) conditions.
Moreover, there is no bounded edge of direction $\eta$ intersecting $\sigma_\eta$ and such that $e \not\subset \sigma_\eta$.
Therefore $(C,\E )$ satisfies \Cref{ItemHyp3} of \Cref{Th4.4.4} with respect to the real tropical point $(v,\varepsilon )$, hence $(C,\E )$ is hyperbolic with respect to $(v,\varepsilon )$.  
\end{example} 

\begin{lemma}
\label{LemDirEdge}
Let $u$ be a 3-valent vertex of a non-singular tropical curve $C$.
Let $\eta \in \Z^2 \backslash \{ (0,0) \}$ be a primitive integer vector.
Then $u$ is incident to an edge of primitive integer direction $\eta$ if and only if every edge $e$ adjacent to $u$ satisfies 
\[ |\det (\overrightarrow{e} | \eta )| \leq 1 , \]
for $\overrightarrow{e} \in \Z^2$ the primitive integer direction of $e$. 
\end{lemma}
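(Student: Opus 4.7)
The plan is to exploit two structural features of a non-singular tropical curve at a $3$-valent vertex $u$: the \emph{unimodularity} of the dual triangle $u^\vee \subset \Delta_C$ (area $1/2$ means any two edge-vectors of $u^\vee$ form a $\Z$-basis of $\Z^2$, and this translates, after a $90^\circ$ rotation, to the primitive integer directions of the edges adjacent to $u$), and the \emph{balancing condition} which, since all edges have weight $1$, reads
\[
\overrightarrow{e_1} + \overrightarrow{e_2} + \overrightarrow{e_3} = 0
\]
for $e_1, e_2, e_3$ the three edges adjacent to $u$, with primitive integer directions chosen pointing out of $u$. From these I obtain in particular that for any two distinct indices $i \neq j$, $|\det(\overrightarrow{e_i} | \overrightarrow{e_j})| = 1$.

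For the forward direction, suppose (up to relabelling) $\overrightarrow{e_1} = \pm\eta$. Then $|\det(\overrightarrow{e_1} | \eta)| = 0 \le 1$, and for $i = 2, 3$ unimodularity gives
\[
|\det(\overrightarrow{e_i} | \eta)| = |\det(\overrightarrow{e_i} | \overrightarrow{e_1})| = 1,
\]
so the inequality holds for every edge at $u$.

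For the backward direction, I argue by contradiction. Assume every adjacent edge satisfies $|\det(\overrightarrow{e_i} | \eta)| \le 1$, but no $\overrightarrow{e_i}$ equals $\pm\eta$. Since each $\overrightarrow{e_i}$ is primitive and not parallel to $\eta$, the integer $\det(\overrightarrow{e_i} | \eta)$ is nonzero, hence $|\det(\overrightarrow{e_i} | \eta)| \ge 1$, and combined with the hypothesis each is exactly $\pm 1$. Applying $\det(\,\cdot\, | \eta)$ to the balancing identity gives
\[
\det(\overrightarrow{e_1} | \eta) + \det(\overrightarrow{e_2} | \eta) + \det(\overrightarrow{e_3} | \eta) = 0,
\]
a sum of three elements of $\{\pm 1\}$; but such a sum is odd and therefore nonzero, a contradiction.

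The proof is essentially a one-line parity argument once the two structural facts are in place, so there is no real obstacle; the only point requiring minor care is orienting the $\overrightarrow{e_i}$ consistently (outward at $u$) so that the balancing relation takes the homogeneous form $\sum \overrightarrow{e_i} = 0$, which is what makes the final $\pm 1 \pm 1 \pm 1 \ne 0$ contradiction work.
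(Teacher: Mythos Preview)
Your proof is correct and follows essentially the same approach as the paper: both directions rely on unimodularity at the non-singular vertex (giving $|\det(\overrightarrow{e_i}|\overrightarrow{e_j})|=1$ for $i\neq j$) and on applying $\det(\,\cdot\,|\eta)$ to the balancing relation $\sum_i \overrightarrow{e_i}=0$, then observing that three terms in $\{-1,0,1\}$ summing to zero cannot all be $\pm 1$. Your contrapositive framing and explicit parity remark make the last step slightly crisper, but the argument is the same.
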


\begin{proof}
Since $C$ is non-singular, the self-intersection of $C$ locally around $u$ has tropical intersection multiplicity $(C\circ C)_u = 1$. 
In particular, for $e_0 , e_1 , e_2$ the three edges adjacent to $u$, we have 
$ | \det (\overrightarrow{e_i} | \overrightarrow{e_j} )| \leq 1  $
for $i,j \in \{ 0,1,2 \}$.
We obtain directly that if the edge $e_0$ has primitive integer direction $\eta$, then $ | \det (\overrightarrow{e_i} | \eta )| \leq 1  $
for $i \in \{ 0,1,2 \}$.

Conversely, assume that $ | \det (\overrightarrow{e_i} | \eta )| \leq 1 $
for $i \in \{ 0,1,2 \}$.
Choose the primitive integer directions $\overrightarrow{e_i}$ of the edges $e_i$ so that they all point inward the vertex $u$.
By non-singularity and balancing condition around $u$, we get $\sum_i \det (  \overrightarrow{e_i} | \eta ) = 0 $.
By assumption, exactly one of the edges $e_i$ (say $e_0$) must satisfy $\det (  \overrightarrow{e_0} | \eta ) = 0$.
Then $e_0$ has primitive integer direction $\eta$.
\end{proof}

\begin{proof}[Proof of \Cref{Th4.4.4}]
Let $D$ be the connected component of $\T \pr^2 \backslash C$ containing $v'$.
By \Cref{PropEquivHyp}, it is enough to check the hyperbolicity of $(C,\E )$ with respect to a real tropical point $(v,\varepsilon )$, with $v$ generic with respect to $C$ and contained in $D$.

Let $L$ be a tropical line going through $v$.
Since $v$ is generic with respect to $C$, an connected component $E\subset C\cap L$ is either a transverse intersection point, an isolated vertex of $C$ lying in the interior of a face $\sigma_\eta$ of $\Sigma_v$, a closed edge of $C$ lying in the interior of a face $\sigma_\eta$ of $\Sigma_v$, or a segment (non-reduced to a point) and strictly contained in a closed edge of $C$ intersecting a face $\tau_\zeta$ of $\Sigma_v$. 
In the following, we will check for each four possible intersection types that, for every real phase structure $\E '$ on $L$ such that the real tropical line $(L,\E ') $ goes through the real tropical point $(v,\varepsilon )$ (ie. ~ the real part $\R L_{\E '}$ goes through the real part $\R v_\varepsilon$ of $v$ given by the symmetric copy $\varepsilon (v)$), any realisation of $(C,\E )$ and $(L,\E ')$ intersect in distinct real points with tropicalisation in the considered connected component of the intersection if and only if \Cref{ItemHyp1}, \Cref{ItemHyp2} and \Cref{ItemHyp3} of the statement of \Cref{Th4.4.4} are satisfied. 
If $L$ goes through $v$ and has vertex distinct from $v$, the condition $(L,\E ')$ going through $(v,\varepsilon )$ imposes that $\varepsilon$ is contained in the real phase structure of the edge of $L$ containing $v$.  
Therefore, out of the 4 possible real phase structures $\E '$ on $L$, two of them satisfy the condition that $(L,\E ')$ goes through $(v,\varepsilon )$.

\begin{enumerate}[label=(\Roman*)]
\item \label{ItemProofHyp1} Assume first that $E$ is a transverse intersection point $u$ in $C\cap L$.
Let $e$ and $e'$ be the edges of $C$ and $L$ containing $u$, respectively.
By \Cref{CorTransMult2}, for any real phase structure $\E '$ on $L$ such that $\R L_{\E '}$ contains $\varepsilon (v)$, the intersection points with tropicalisation $u$ are all real for any realisations of $(C,\E )$ and $(L,\E ')$ if and only if $|\det (\overrightarrow{e} | \overrightarrow{e}')| \leq 2$, and $\E_e = \E_{e'}'$ if $|\det (\overrightarrow{e} | \overrightarrow{e}')| = 2$.

If $u$ lies in the interior of a face $\sigma_\eta$ of $\Sigma_v$, then the edge $e'$ is distinct from the edge of $L$ containing $v$, hence if $|\det (\overrightarrow{e} | \overrightarrow{e}')| = 2$, one of the two possible choices of real phase structure $\E '$ on $L$ satisfies $\E_e \cap \E_{e'}' = \emptyset$.
In that case, any realisations of $(C,\E)$ and $(L,\E' )$ intersect in two distinct complex conjugated points with tropicalisation $u$.
Then if $u$ lies in the interior of a face $\sigma_\eta$ of $\Sigma_v$, the intersection points with tropicalisation $u$ are all real for any choice of real phase structure $\E '$ and any realisations of $(C,\E )$ and $(L,\E ')$ if and only if $|\det (\overrightarrow{e} | \overrightarrow{e}')| \leq 1$.
Since in that case, the edge $e'$ must have primitive integer $\eta$, by \Cref{LemDirEdge} the intersection points with tropicalisation $u$ are all real for any choice of real phase structure $\E '$ and any realisations of $(C,\E )$ and $(L,\E ')$ if and only if for any vertex of $C$ incident to $e$, that vertex is incident to an edge of primitive integer direction $\eta$.

If $u$ lies on a face $\tau_\zeta$ of $\Sigma_v$, then $e'$ is the edge of $L$ containing $v$ and so has primitive integer direction $\zeta$.
For $\{ \zeta , \eta , \xi \} = \{ (1,0) , (0,1), (-1,-1) \}$, if the vertex of $e$ contained in $\sigma_\eta$ is incident to an edge of direction $\eta$, and the  the vertex of $e$ contained in $\sigma_\xi$ is incident to an edge of direction $\xi$, then by non-singularity of $C$ we obtain $|\det (\overrightarrow{e} | \zeta )| \leq 2$.
If moreover we have $|\det (\overrightarrow{e} | \zeta )| = 2$, then the condition $\E_e = \E_{e'}'$ is equivalent to the condition $\varepsilon \in \E_e$, since $\varepsilon \in \E_{e'}'$ by assumption and the edges $e$ and $e'$ have common primitive integer direction modulo 2.

Therefore, if $(C,\E)$ is hyperbolic with respect to $(v,\varepsilon )$, all vertices of $C$ lying in the interior of the face $\sigma_\eta$ of $\Sigma_v$ are incident to an edge of primitive integer direction $\eta$ (\Cref{ItemHyp1}), and the edges $e$ of $C$ intersecting $\tau_\zeta$ with $|\det (\overrightarrow{e} | \zeta )| = 2$ satisfy $\varepsilon \in \E_e$ (\Cref{ItemHyp2}).

\item \label{ItemProofHyp2} Assume that $E$ is an isolated vertex $u$ of $C$ contained in the interior of a face $\sigma_\eta$ of $\Sigma_v$.
Then every edge of $C$ adjacent to $u$ has primitive integer direction distinct from $\eta$.
In particular, by \Cref{LemDirEdge}, there exists an edge $e_u$ of $C$ adjacent to $u$ with $|\det (\overrightarrow{e_u} | \eta)| \geq 2$.
For $L'$ a tropical line going through $v$ given by a small translation of $L$ and intersecting $e_u$ in a transverse intersection point $u'$, we have $(C\circ L')_{u'} = |\det (\overrightarrow{e_u} | \eta)| \geq 2$.
Using the same arguments as in \Cref{ItemProofHyp1}, we obtain that if $C$ and $L$ intersect in an isolated vertex of $C$ lying in the interior of $\sigma_\eta$, there exists a real phase structure $\E '$ on $L'$ such that $(L' , \E ')$ goes through $(v, \varepsilon )$ and the realisations of $(C,\E )$ and $(L' , \E ')$ intersect in non-real points. 
In that case, the real tropical curve $(C,\E)$ cannot be hyperbolic with respect to $(v,\varepsilon)$.

Therefore if $(C,\E)$ is hyperbolic with respect to $(v,\varepsilon )$, there exist no tropical line going through $v$ intersecting $C$ in an isolated vertex lying in the interior of $\sigma_\eta$.   

\item \label{ItemProofHyp3} Assume that $E$ is a closed edge $e$ of $C$ lying in the interior of a face $\sigma_\eta$ of $\Sigma_v$.
Let $e'$ be the edge of $L$ containing $e$.
Since the edge $e$ lies in the interior of $\sigma_\eta$, the edge $e$ has primitive integer direction $\eta$ and the edge $e'$ does not contain the point $v$.

If $e$ is unbounded, then the edge $e$ has tropical intersection multiplicity 1, hence by \Cref{PropRealMult}, for any real phase structure $\E '$ on $L$ such that $\R L_{\E '}$ contains $\varepsilon (v)$, the unique intersection point tropicalising in $e$ is real for any realisations of $(C,\E )$ and $(L,\E ')$.

If $e$ is bounded, then by \Cref{Th4.3.9}, for any real phase structure $\E '$ on $L$ such that $\R L_{\E '}$ contains $\varepsilon (v)$, the intersection points tropicalising in $e$ are all real for any realisations of $(C,\E )$ and $(L,\E ')$ if and only if the edge $e$ is twisted.

Therefore if $(C,\E)$ is hyperbolic with respect to $(v,\varepsilon )$, every closed bounded edge $e$ of $C$ of primitive integer direction $\eta$ and contained in the interior of the face $\sigma_\eta$ is twisted (first case of \Cref{ItemHyp3}).
 
\item \label{ItemProofHyp4} Assume that $E$ is a segment (non-reduced to a point) strictly contained in a closed edge $e$ of $C$ intersecting a face $\tau_\zeta$ of $\Sigma_v$.
Since $v$ is generic with respect to $C$, the edge $e$ intersects $\tau_\zeta$ in a transverse point in the interior of $e$ and $\tau_\zeta$.
Let $e'$ be the closed edge of $L$ containing $E$.
Note that $e'$ does not contain the point $v$.
Since the edge $e$ intersects $\tau_\zeta$ and is not contained in $\tau_\zeta$, it has primitive integer direction in $\{ \eta , \xi \}$, for $\{ \zeta , \eta , \xi \} = \{ (1,0) , (0,1), (-1,-1) \}$. 

If $e$ is unbounded, then the segment $E$ has tropical intersection multiplicity 1.
By \Cref{PropRealMult}, for any real phase structure $\E '$ on $L$ such that $(L,\E ')$ goes through $(v,\varepsilon)$, every realisation of $(C,\E )$ and $(L,\E ')$ intersect in a single real point tropicalising in $E$.

If $e$ is bounded, then by \Cref{Th4.3.10}, for any real phase structure $\E '$ on $L$ such that $(L,\E ')$ goes through $(v,\varepsilon )$, every realisations of $(C,\E )$ and $(L,\E ')$ intersect in distinct real points with tropicalisation in $E$ if and only if either $\E_e \neq \E_{e'} '$ or $\E_e = \E_{e'} '$ and $E$ is relatively non-twisted.
One of the two possible choices for $\E '$ satisfies $\E_e \neq \E_{e'} '$ since $e'$ does not contain $v$, and the other choice satisfies $\E_e = \E_{e'} '$.

Therefore if $(C,\E)$ is hyperbolic with respect to $(v,\varepsilon )$, for every closed bounded edge $e$ of $C$ of primitive integer direction $\eta$ intersecting $\tau_\zeta$, the segment $E := e \cap \sigma_\eta$ is relatively non-twisted with respect to the unique real tropical line through $(v,\varepsilon )$ containing $E$ and with $\E_e = \E_{e'} '$ (second case of \Cref{ItemHyp3}). 
\end{enumerate}

Therefore, if $(C,\E)$ is hyperbolic with respect to $(v,\varepsilon )$, the conditions \Cref{ItemHyp1}, \Cref{ItemHyp2} and \Cref{ItemHyp3} from the statement of \Cref{Th4.4.4} are satisfied.
Conversely, if \Cref{ItemHyp1}, \Cref{ItemHyp2} and \Cref{ItemHyp3} are satisfied, then all possible types of intersection components give rise to only distinct real points by \Cref{ItemProofHyp1}, \Cref{ItemProofHyp2}, \Cref{ItemProofHyp3} and \Cref{ItemProofHyp4}, hence $(C,\E)$ is hyperbolic with respect to $(v,\varepsilon )$. 
\end{proof}

\begin{definition}
\label{DefHoneycomb}
Let $C$ be a non-singular tropical curve.
We say that $C$ is a \emph{honeycomb} if every edge $e$ of $C$ has primitive integer direction contained in $\{ (1,0 ) , (0,1) , (1,1) \}$.
\end{definition}

For instance, the non-singular tropical curve represented in \Cref{FigHyperboHoney} is a honeycomb, and a non-singular honeycomb of degree $d$ in $\T \pr^2$ has dual subdivision of the form pictured in \Cref{FigHyperboHoney2Dual}.
If $C$ is a non-singular honeycomb of degree $d$ in $\T \pr^2$, then for $\E$ a real phase structure on $C$ and $(v',\varepsilon )$ a real tropical point, \Cref{ItemHyp1} and \Cref{ItemHyp2} of \Cref{Th4.4.4} are automatically satisfied, hence the real tropical curve $(C,\E )$ is hyperbolic with respect to $(v',\varepsilon )$ if and only if \Cref{ItemHyp3} of \Cref{Th4.4.4} is satisfied. 

Recall that a real algebraic curve $\mathcal{C}$ in $\pr^2$, or in this case $\pr_\K^2$, is said to be \emph{stable} if $\mathcal{C}$ is hyperbolic with respect to every point in the positive orthant $(\R_{>0})^2$ of the algebraic torus $(\R^\times )^2 \subset \pr^2 (\R)$, or in this case in the positive orthant $((\K_\R)_{>0})^2$ of the algebraic torus $(\K_\R^\times )^2 \subset \pr_\K^2 (\R)$.
We give a new approach to Speyer's classification of stable curves near the tropical limit in the case when the tropical limit is non-singular.

\begin{theorem}[\cite{speyer2005horn}]
\label{ThStable}
Let $(C,\E )$ be a non-singular real tropical curve of degree $d$ in $\T \pr^2$, and let $\mathcal{C} \subset \pr_\K^2$ be a realisation of $(C,\E)$.
Then $(C,\E )$ is hyperbolic with respect to all real tropical points $(v,\varepsilon) \in \T \pr^2 \times \mathcal{R}^2$ for some fixed $\varepsilon \in \mathcal{R}^2$ if and only if $C$ is a honeycomb with every bounded edges twisted.
Moreover, the curve $\mathcal{C}$ is stable if and only if, in addition, every distribution of signs $\delta$ determining $\E$ is constant on $\Delta_C \cap \Z^2$.
\end{theorem}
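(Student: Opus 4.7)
The plan is to apply \Cref{Th4.4.4} at every choice of $v$, and to track what the combined conditions force on $(C,\E)$; the stability statement will then follow by specialising $\varepsilon = (0,0)$ and relating the real phase to the distribution of signs.

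For the forward direction of the first equivalence, suppose $(C,\E)$ is hyperbolic with respect to every $(v,\varepsilon)$ for some fixed $\varepsilon$. Given a 3-valent vertex $w$ of $C$, for each $\eta \in \{(1,0),(0,1),(1,1)\}$ I place $v$ far enough in an appropriate sector so that $w$ lies in the interior of $\sigma_\eta \subset \Sigma_v$; the first condition of \Cref{Th4.4.4} then forces $w$ to be incident to an edge in direction $\eta$. Ranging over $\eta$ and using non-singularity at $w$, the three edges at $w$ have directions exactly $\{(1,0),(0,1),(1,1)\}$, so $C$ is a honeycomb. Similarly, for each bounded edge $e$ of direction $\eta$, by choosing $v$ so that $e \subset \sigma_\eta$ and applying the third condition of \Cref{Th4.4.4}, I conclude that $e$ is twisted.

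For the converse direction, assume $C$ is a honeycomb with all bounded edges twisted and verify the three conditions of \Cref{Th4.4.4} for an arbitrary generic $v$ and any fixed $\varepsilon$. The first condition is immediate from the honeycomb property. The second is vacuous, since for every $\overrightarrow{e} \in \{(1,0),(0,1),(1,1)\}$ and every $\zeta \in \{(1,0),(0,1),(-1,-1)\}$ a direct computation gives $|\det(\overrightarrow{e}|\zeta)| \leq 1$. For the third condition, the subcase $e \subset \sigma_\eta$ is immediate. The main obstacle is the subcase where $e\cap\sigma_\eta$ is a strict subsegment of $e$, crossing exactly one boundary ray $\tau_\zeta$ of $\sigma_\eta$; here I will examine the local configuration at the vertex of $e$ lying in $\sigma_\eta$ together with the ray $e'$ of the unique relevant tropical line, and apply \Cref{PropSignRelTwist} together with the twistedness of $e$ (and the honeycomb structure at the vertex) to confirm that $e\cap\sigma_\eta$ is relatively non-twisted with respect to $(L,\E')$.

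For the stability part, stability of $\mathcal{C}$ amounts to hyperbolicity with respect to every $(v,(0,0))$ with $v \in \T\pr^2$, so by the first equivalence $C$ must be a honeycomb with all bounded edges twisted. By \Cref{PropHomeoHypLocus} the hyperbolicity locus of $\mathcal{C}$ corresponds to $\R H$, the interior of the innermost oval of $\R C_\E$, so stability is equivalent to the positive orthant of $\R\pr^2$ being disjoint from $\R C_\E$ (which then forces it to be the innermost region by the nested-ovals structure of a hyperbolic arrangement). Using the formula $\delta(\varepsilon(v)) = (-1)^{\varepsilon_1 v_1 + \varepsilon_2 v_2}\delta(v)$ and the definition of $\E_e$, the condition $(0,0) \notin \E_e$ for every edge $e$ translates to $\delta(v_1^e) = \delta(v_2^e)$ for the endpoints $v_1^e, v_2^e$ of every dual edge $e^\vee \subset \Delta_C$, which by connectedness of $\Delta_C$ is equivalent to $\delta$ being constant on $\Delta_C \cap \Z^2$. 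Conversely, a constant $\delta$ yields a defining polynomial with all coefficients of one sign, whose real zero locus misses the positive orthant; combined with the honeycomb/twisted structure giving hyperbolicity, this places the positive orthant inside the innermost region, hence inside the hyperbolicity locus, so $\mathcal{C}$ is stable.
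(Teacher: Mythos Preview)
Your forward direction matches the paper's argument. The gap is in the converse direction, specifically in how you handle the choice of $\varepsilon$ and the relatively non-twisted subcase of \Cref{ItemHyp3}.

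You write that you will verify the three conditions of \Cref{Th4.4.4} ``for an arbitrary generic $v$ and any fixed $\varepsilon$''. This cannot be right: for a fully twisted honeycomb the signed hyperbolicity locus $\R H$ lies in a single orthant, so $(C,\E)$ is \emph{not} hyperbolic with respect to all $(v,\varepsilon)$ for the other three values of $\varepsilon$. The theorem only asserts existence of one $\varepsilon$, and that $\varepsilon$ has to be pinned down. The paper does this by observing (via the first item of \Cref{PropTwistSign}, using that in a honeycomb the two outer vertices $v_3^e,v_4^e$ always differ mod~$2$) that ``all bounded edges twisted'' forces the extended sign distribution to be constant on $\varepsilon(\Delta_C)\cap\Z^2$ for exactly one $\varepsilon$; this is the orthant one must use.

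Your treatment of the second subcase of \Cref{ItemHyp3} inherits this problem. You propose to ``apply \Cref{PropSignRelTwist} together with the twistedness of $e$'' to conclude that $E=e\cap\sigma_\eta$ is relatively non-twisted. But the relatively twisted/non-twisted dichotomy depends on the sign distribution $\delta'$ on the line $L$, which in turn depends on $\varepsilon$; the twistedness of $e$ alone (a condition on $v_3^e$ and $v_4^e$ in $\Delta_C$) says nothing about $v_3^{e'}$ in $\Delta_L$. The paper's argument is that once $\varepsilon$ is chosen so that $\delta$ is constant on $\varepsilon(\Delta_C)$, any real tropical line $(L,\E')$ through $(v,\varepsilon)$ with $\E_e=\E'_{e'}$ automatically has $E$ relatively non-twisted; this is where the global constant-sign information is actually used. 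Without that step, your local check cannot go through.

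Once the converse is proved in this form, the stability statement is immediate in the paper: stability is exactly the case $\varepsilon=(0,0)$, i.e.\ $\delta$ constant on $\Delta_C\cap\Z^2$ itself. Your separate argument via $(0,0)\notin\E_e$ and ``forces it to be the innermost region'' is heading in the right direction but is both more work and still leans on the first equivalence, so fixing the gap above also streamlines the final paragraph.
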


\begin{proof}[Proof of \Cref{ThStable}]
Assume that $(C,\E )$ is hyperbolic with respect to all real tropical points $(v,\varepsilon) \in \T \pr^2 \times \mathcal{R}^2$ for some fixed $\varepsilon \in \mathcal{R}^2$.
By \Cref{ItemHyp1} of \Cref{Th4.4.4} applied to every point $v$ in $\T \pr^2$, every vertex of $C$ not lying on a 1-dimensional stratum of $\T \pr^2$ must be incident to an edge of direction $(1,0)$, an edge of direction $(0,1)$ and an edge of direction $(1,1)$.
Since $C$ is non-singular, every such vertex is 3-valent, hence $C$ is a honeycomb.
By \Cref{ItemHyp3} of \Cref{Th4.4.4} applied to every point $v$ in $\T \pr^2$, every bounded edge of $C$ of direction $(1,0) , (0,1)$ or $(1,1)$ must be twisted, hence every bounded edge of $C$ must be twisted since $C$ is honeycomb.

Conversely, assume that $(C,\E )$ is a honeycomb with every bounded edge twisted.
Then \Cref{ItemHyp1}, \Cref{ItemHyp2} and the first part of \Cref{ItemHyp3} in \Cref{Th4.4.4} are satisfied for every point $v$ in $\T \pr^2$.
By the first item of \Cref{PropTwistSign}, since every bounded edge is twisted, there exists an orthant $\varepsilon \in \mathcal{R}^2$ of $\R \pr^2$ such that (the extension to all symmetric copies of) every distribution of signs $\delta$ determining $\E$ is constant on $\varepsilon (\Delta_C) \cap \Z^2$.
Then for every tropical line $L$ intersecting $C$ in a segment $E$ strictly contained in both an edge $e$ of $C$ and an edge $e'$ of $L$, if $\E '$ is a real phase structure on $L$ such that $(L,\E' )$ goes through a real tropical point $(v,\varepsilon)$, then either $\E_e \neq \E_{e'}'$ or $E$ is relatively non-twisted.
Therefore \Cref{ItemHyp3} in \Cref{Th4.4.4} is satisfied, hence $(C,\E )$ is hyperbolic with respect to every real tropical point $(v,\varepsilon)$.
We obtain the additional statement whenever $\varepsilon = (0,0)$ (seen in $\Z_2^2$). 
\end{proof} 

Speyer's statement is stronger than \Cref{ThStable}, as it considers any stable curve near the tropical limit, without non-singularity assumption.
Using a generalisation of \Cref{CorTransMult2} to weighted edges (in terms of distribution of signs instead of real phase structure), we can recover the fact that if the curve $\mathcal{C}$ is non-singular and stable, then the tropical limit $C$ is a honeycomb with possibly some contracted bounded edges, and the associated distribution of signs is constant on $\Delta_C \cap \Z^2$.
Speyer completed the classification by proving that the subset of non-singular stable curves is dense in the set of stable curves \cite[Lemma 3]{speyer2005horn}.   

\subsection{Vector space structure on twist-admissible sets}

The following will allow us to consider the set of dividing sets of twisted edges on a non-singular tropical curve as a $\Z_2$-vector space.
We will need this point of view in order to treat the case of hyperbolic honeycombs in \Cref{SecHoneycomb} and prove \Cref{propHypHoney}.
 
Let $C$ be a non-singular tropical curve in $\R^2$ or $\T \pr^2$, and let $g$ be the number of primitive cycles of $C$.
We can equip the set $\Edge^0 (C)$ of bounded edges of $C$ with a $\Z_2$-vector space structure by identification with $\Z_2^{| \Edge^0 (C) |}$.

\begin{definition}
\label{DefAdm}
Let $C$ be a non-singular tropical curve in $\R^2$ or $\T \pr^2$, and let $g$ be the number of primitive cycles of $C$.
The space of \emph{admissible configurations of twists} $\Adm (C)$ on $C$ is the $\Z_2$-vector subspace of $\Edge^0 (C)$ induced by the $2g$ linear conditions from \Cref{EqAdm} (saying that the sum of directions modulo 2 of twisted edges on a primitive cycle is the zero vector).
The space of \emph{dividing configurations of twists} $\Div (C)$ is the $\Z_2$-vector subspace of $\Adm (C)$ induced by the linear conditions from \Cref{EqDiv} (saying that each primitive cycle has an even number of twisted edges).
\end{definition}

\begin{remark}
\label{RkDividing}
Let $g$ be the number of primitive cycles of a non-singular tropical curve $C$ of degree $d$ in $\T \pr^2$.
The space $\Div (C)$ has codimension $l \leq g$ in $\Adm (C)$, where $l$ is the number of primitive cycles $\gamma$ in $C$ such that $\gamma$ contains at least one edge of direction $(1,0)$ modulo 2, one edge of direction $(0,1)$ modulo 2 and one edge of direction $(1,1)$ modulo 2.

Let $\Hyp (C)$ be the set of configuration of twists $T$ such that a non-singular real tropical curve $(C,\E)$ inducing $T$ is hyperbolic.
For $d=1,2,3$, we have equality of $\Hyp (C)$ and $\Div (C)$, hence in that case the set $\Hyp (C)$ inherits the $\Z_2$-vector space structure from $\Div (C)$.
For $d\geq 4$, any configuration of twists $T \in \Hyp (C)$ satisfies 
\[   \dim \ker A_T = \left\lfloor \frac{d}{2} \right\rfloor < g = \binom{d-1}{2} ,  \]
hence $T \neq \overrightarrow{0}$, for $\overrightarrow{0}$ the zero vector of $\Edge^0 (C)$ corresponding to the empty set of edges.
In that case, the set $\Hyp (C)$ is strictly contained in the set $\Div (C)$, and does not inherit the $\Z_2$-vector space structure from $\Div (C)$.
However, the following property is still satisfied. 
For $T \in \Hyp (C)$ and $T' \in \Div (C)$ such that the edge supports of $T$ and $T'$ are disjoint, the configuration of twists $T+T'$ belongs to $\Hyp (C)$.
This follows from the fact that hyperbolic curves are dividing curves with minimal number of connected components by \Cref{PropHypRok}, hence $\dim \ker A_{T + T'} \geq \dim \ker A_T$, and since the edge supports are disjoint, we have $\dim \ker A_{T + T'} \leq \dim \ker A_T$. 
\end{remark}

\subsection{Going further in the honeycomb case}
\label{SecHoneycomb}


Honeycombs can be decomposed into smaller honeycombs glued along multi-bridges.

\begin{definition}
\label{DefMultiBridge}
Let $C$ be a non-singular honeycomb in $\T \pr^2$.
A \emph{multi-bridge} $B$ on $C$ is a twist-admissible dividing set of bounded edges of $C$, with common primitive integer direction, such that the graph $C\backslash B$ has two connected components, and the edges of the dual subdivision $\Delta_C$ dual to the edges of $B$ lie on a common line in $\Delta_C$.
\end{definition}

From \Cref{DefHoneycomb} and \Cref{DefMultiBridge}, we obtain that any two distinct multi-bridges $B,B'$ on a non-singular honeycomb $C$ have disjoint edge supports. 

\begin{figure}
\centering
\begin{subfigure}[t]{0.45\textwidth}
	\centering
	\includegraphics[width=\textwidth]{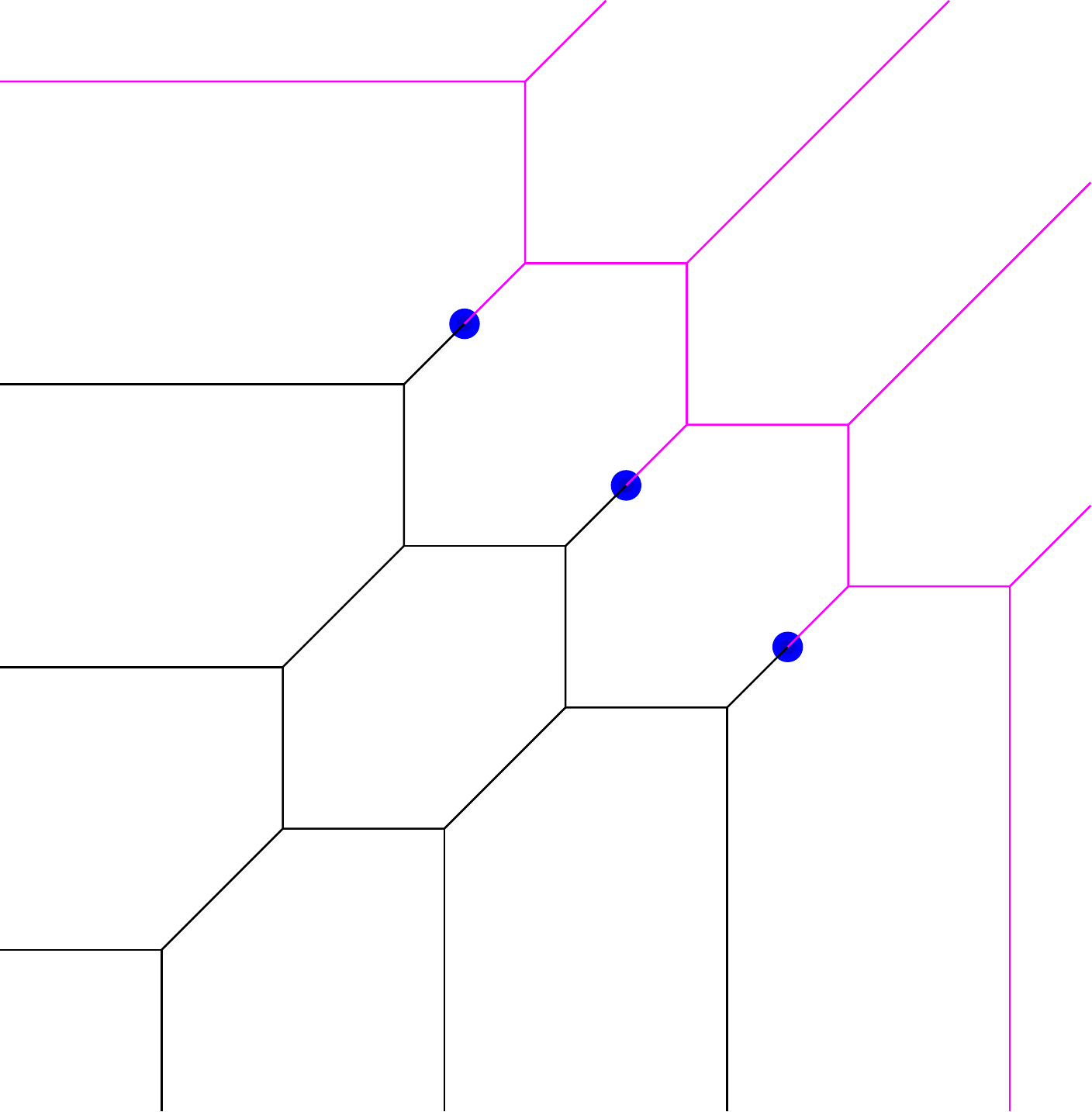}
	\caption{Honeycomb with a twisted multi-bridge.}
	\label{FigHyperboHoney2}
\end{subfigure}\hfill
\begin{subfigure}[t]{0.45\textwidth}
	\centering
	\includegraphics[width=\textwidth]{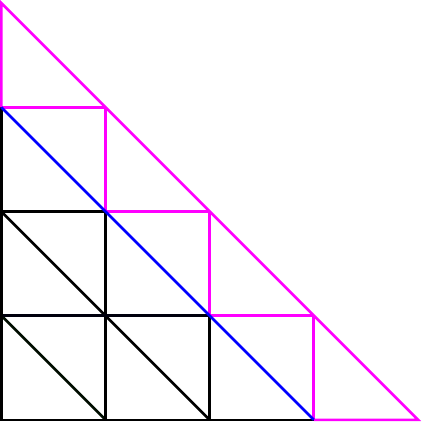}
	\caption{Dual subdivision and edges dual to the multi-bridge.}
	\label{FigHyperboHoney2Dual}
\end{subfigure}
\caption{\Cref{ExHoneyBridge} and \Cref{ExLast}.}
\label{FigHoneyBridge}
\end{figure}
 
\begin{example}
\label{ExHoneyBridge}
The non-singular tropical curve $C$ pictured in \Cref{FigHyperboHoney2} is a  honeycomb of degree 4 in $\T \pr^2$.
The set of edges of $C$ marked by blue points form a multi-bridge $B$, with all edges of $B$ having primitive integer direction $(1,1)$.
\Cref{FigHyperboHoney2Dual} represents the dual subdivision $\Delta_C$, and the blue edges in $\Delta_C$ are the edges dual to the edges of $B$.
Note that the blue edges in $\Delta_C$ lie on a common line.
\end{example}

\begin{proposition}
\label{PropHoneyBridge}
Let $C$ be a non-singular honeycomb of degree $d$ in $\T \pr^2$.
The $\Z_2$-vector space $\Div (C)$ is of dimension $3(d-1)$ with basis given by the set of multi-bridges on $C$, seen as vectors in the $\Z_2$-vector space $\Edge^0 (C)$.
\end{proposition}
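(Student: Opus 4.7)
The plan is to exhibit the multi-bridges as a basis of $\Div(C)$ by combining linear independence with a matching dimension upper bound coming from a local analysis of the defining conditions on each hexagonal primitive cycle. Since $C$ is a non-singular honeycomb, its primitive cycles are precisely the hexagons dual to interior lattice points $p$ of $\Delta_d$ (of which there are $\binom{d-1}{2}$), and each such hexagon contains exactly two parallel edges in each of the three directions $(1,0), (0,1), (1,1)$. I first verify that each multi-bridge $B$ of common direction $\eta$, whose dual edges lie on an interior line $\ell$ of $\Delta_d$, belongs to $\Div(C)$: for a hexagon $\gamma$ around $p$, the intersection $B \cap \gamma$ is empty when $p \notin \ell$ and otherwise consists exactly of the two edges of direction $\eta$ of $\gamma$ (dual to the two segments of $\ell$ incident to $p$). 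In both cases $|B \cap \gamma|$ is even and the direction-sum modulo $2$ equals $\overrightarrow{0}$, so admissibility and the dividing condition both hold.

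Next, I count and establish linear independence. Multi-bridges are in bijection with interior lines of $\Delta_d$ in the three directions $(0,1), (1,0), (1,-1)$, with each direction contributing $d-1$ such lines and yielding $3(d-1)$ multi-bridges in total. By the observation stated just after \Cref{DefMultiBridge}, distinct multi-bridges have disjoint edge supports, so their indicator vectors are $\Z_2$-linearly independent in $\Edge^0(C)$, giving $\dim \Div(C) \geq 3(d-1)$.

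For the reverse inequality I analyse the defining conditions hexagon by hexagon. Writing $a, b, c \in \Z_2$ for the parities of the numbers of twisted edges in each of the three directions inside a hexagonal cycle $\gamma$, admissibility reads $a+c = b+c = 0$ and dividing reads $a+b+c = 0$, which together force $a = b = c = 0$. Equivalently, $T$ lies in $\Div(C)$ precisely when, for every hexagon and every direction, the two parallel edges of $\gamma$ in that direction carry the same twist status. I then introduce the equivalence relation on $\Edge^0(C)$ generated by this hexagonal parallelism, so that $\Div(C)$ becomes the space of twist configurations constant on equivalence classes, and its dimension equals the number of such classes.

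Finally I identify the equivalence classes with the multi-bridges. Passing to the dual subdivision, two edges of $\Delta_C$ are directly related exactly when they share an interior lattice point of $\Delta_d$; along any interior line $\ell$ of $\Delta_d$, each intermediate lattice point $(k,j)$ satisfies $k \geq 1$, $j \geq 1$ and $k+j \leq d-1$, hence is interior to $\Delta_d$. Transitivity therefore joins all dual edges on $\ell$ into a single class, while edges on different interior lines share no vertex and fall into distinct classes. Thus the equivalence classes coincide with the $3(d-1)$ multi-bridges, giving $\dim \Div(C) = 3(d-1)$ and showing that the multi-bridges form a basis. The main subtlety I anticipate is this final identification: one must check for each of the three dual edge directions of $\Delta_C$ that the transitive closure along an interior line is not broken at a non-interior intermediate vertex, which depends on the combinatorics of the right triangle $\Delta_d$ in each direction separately.
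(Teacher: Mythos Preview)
Your proof is correct. The overall structure---verifying that multi-bridges lie in $\Div(C)$, counting them as $3(d-1)$, and using disjoint edge supports for linear independence---matches the paper's argument. Where you diverge is in establishing the upper bound $\dim \Div(C) \leq 3(d-1)$. The paper simply asserts that $\Div(C)$ has codimension $3\binom{d-1}{2}$ in $\Edge^0(C)$ by appealing to the $2g$ admissibility conditions and the $g$ dividing conditions (via \Cref{DefAdm} and \Cref{RkDividing}), then computes $3\binom{d}{2} - 3\binom{d-1}{2} = 3(d-1)$. You instead rewrite the three per-hexagon conditions as ``parallel edges carry the same twist status'', recast $\Div(C)$ as the space of functions constant on the resulting equivalence classes, and identify those classes with the multi-bridges by checking that all intermediate lattice points on each interior line of $\Delta_d$ are genuinely interior. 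Your route is longer but more self-contained: it does not rely on the $3g$ defining linear forms being independent, which the paper uses implicitly but does not verify. The paper's route is quicker once that independence is accepted.
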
 

\begin{proof}
Let $g$ be the number of primitive cycles in $C$.
Since $C$ is of degree $d$ in $\T \pr^2$, we have $g = \binom{d-1}{2}$.
Since $C$ is a non-singular honeycomb, every primitive cycle of $C$ contains exactly two edges of direction $(1,0)$, two edges of direction $(0,1)$ and two edges of direction $(1,1)$.
Then by \Cref{DefAdm} and \Cref{RkDividing}, the $\Z_2$-vector space $\Div (C)$ is a subspace of $\Edge^0 (C)$ of codimension $3 \binom{d-1}{2}$.
A non-singular honeycomb of degree $d$ in $\T \pr^2$ has $3 \binom{d}{2}$ bounded edges (with $1+\cdots + (d-1) = \binom{d}{2}$ bounded edges in each direction in $\{ (1,0), (0,1) , (1,1) \}$).
By definition of the $\Z_2$-vector space structure on $\Edge^0 (C)$, we obtain $\dim \Edge^0 (C) = 3 \binom{d}{2}$.
Therefore the $\Z_2$-vector space $\Div (C)$ is of dimension $3 \binom{d}{2} - 3 \binom{d-1}{2} = 3 (d-1)$.
Now each multi-bridge $B$ on a non-singular honeycomb $C$ has edge support dual to edges in $\Delta_C$ contained in a common line.
For each $s\in \{ (1,0), (0,1) , (1,1) \}$, there are $(d-1)$ distinct lines of direction $s$ modulo 2 that contain edges of $\Delta_C$ dual to bounded edges of $C$.
Then $C$ contains exactly $3(d-1)$ multi-bridges, such that any two distinct multi-bridges have disjoint edge supports, hence the vectors in $\Edge^0 (C)$ coming from multi-bridges are all linearly independent.
Since a multi-bridge is twist-admissible and dividing, the set of multi-bridges on $C$ form a basis of the $\Z_2$-vector space $\Div (C)$.
\end{proof}

Using \Cref{PropHoneyBridge}, we obtain a way to check if a point belongs to the tropical hyperbolicity locus of a non-singular real honeycomb, without going through the relatively non-twisted condition of \Cref{ItemHyp3} in \Cref{Th4.4.4}.

\begin{theorem}
\label{propHypHoney}
Let $(C,\E )$ be a non-singular real honeycomb of degree $d$ in $\T \pr^2$ and with induced dividing set of twisted edges $T$.
Let $v$ be a point of $\T \pr^2 \backslash C$ generic with respect to $C$, and let $\Sigma_v$ be the polyhedral subdivision of $\T \pr^2$ with respect to $v$.
Then the real tropical curve $(C,\E )$ is hyperbolic with respect to $(v,\varepsilon )$ for some $\varepsilon \in \mathcal{R}^2$ if and only if for every bounded edge $e\subset C\cap \Int (\sigma_\eta)$ of direction $\eta$, the unique multi-bridge $B$ on $C$ containing $e$ is twisted.
\end{theorem}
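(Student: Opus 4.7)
The plan is to apply Theorem~\ref{Th4.4.4} and use the multi-bridge basis of $\Div(C)$ from Proposition~\ref{PropHoneyBridge} to translate its conditions. The first step is to dispose of conditions~(\ref{ItemHyp1}) and (\ref{ItemHyp2}) of Theorem~\ref{Th4.4.4}: every vertex of a non-singular honeycomb is 3-valent and incident to an edge in each of the three primitive integer directions $(1,0), (0,1), (1,1)$, so (\ref{ItemHyp1}) is automatic; and (\ref{ItemHyp2}) is vacuous since $|\det(\overrightarrow{e} | \zeta)| \leq 1$ for every honeycomb edge direction $\overrightarrow{e}$ and every $\tau_\zeta$-direction $\zeta$. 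So the equivalence to be proved reduces to: the multi-bridge condition holds if and only if there exists $\varepsilon \in \mathcal{R}^2$ such that condition~(\ref{ItemHyp3}) of Theorem~\ref{Th4.4.4} holds at $(v,\varepsilon)$.

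The key combinatorial observation is that, by Proposition~\ref{PropHoneyBridge}, any dividing configuration $T$ on $C$ decomposes uniquely as a $\Z_2$-sum $T = B_1 + \cdots + B_k$ of multi-bridges. Since each bounded edge $e$ of direction $\eta$ lies in exactly one multi-bridge $B$ (of direction $\eta$), and multi-bridges of different direction have disjoint edge supports, $e$ lies in $T$ iff $B \in \{B_1, \ldots, B_k\}$, iff every edge of $B$ belongs to $T$. Combined with Proposition~\ref{PropHypTwist}, which gives that $T$ is dividing whenever $(C,\E)$ is hyperbolic, this shows that the first clause of (\ref{ItemHyp3}) (\emph{every bounded edge $e$ of direction $\eta$ with $e \subset \Int(\sigma_\eta)$ is twisted}) is equivalent to the multi-bridge condition stated.

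For the forward implication, hyperbolicity of $(C,\E)$ with respect to $(v,\varepsilon)$ gives $T$ dividing and both clauses of~(\ref{ItemHyp3}) via Theorem~\ref{Th4.4.4}; the first clause combined with the observation above yields the multi-bridge condition.

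For the reverse implication, starting from the multi-bridge condition, the main obstacle is to construct $\varepsilon$ satisfying the second clause of~(\ref{ItemHyp3}): for every bounded edge $e$ of direction $\eta$ with $E := e \cap \sigma_\eta \subsetneq e$, the segment $E$ must be relatively non-twisted with respect to the unique real tropical line $(L,\E')$ through $(v,\varepsilon)$ with $E \subset e' \subset L$ and $\E_e = \E'_{e'}$. My strategy is to unfold Definition~\ref{DefRelTwist} at the two endpoints of $E$ (a vertex of $C$ inside $\sigma_\eta$ and the vertex $u$ of $L$ on a $\tau$-ray), use the real phase structure axiom at $u$ together with the constraints $\varepsilon \in \E'_{e_L}$ and $\E_e = \E'_{e'}$ to determine $\E'$ on $L$, and apply Proposition~\ref{PropSignRelTwist} to recast the relative non-twistedness as a sign condition on the real phase structure of $C$ near $e$. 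The multi-bridge assumption controls this sign: either the multi-bridge containing $e$ also contains an edge fully in $\Int(\sigma_\eta)$ and is therefore twisted (so $e$ itself is twisted), providing the needed sign, or a direct geometric analysis on honeycombs (using that the parallel edges of a multi-bridge span across $\sigma_\eta$) shows that the sign condition holds independently. The remaining freedom in the choice of $\varepsilon$ then accommodates consistency of $\E'$ on the third edge of $L$.
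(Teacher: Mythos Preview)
Your forward direction is correct and matches the paper: Theorem~\ref{Th4.4.4} gives the first clause of~(\ref{ItemHyp3}), and then the multi-bridge basis from Proposition~\ref{PropHoneyBridge} upgrades ``$e$ twisted'' to ``the multi-bridge through $e$ is twisted''. Disposing of (\ref{ItemHyp1}) and (\ref{ItemHyp2}) is also fine.

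The reverse direction, however, has a genuine gap. You attempt a direct verification of the second clause of~(\ref{ItemHyp3}) --- that each segment $E = e\cap\sigma_\eta$ is relatively non-twisted with respect to the appropriate real tropical line $(L,\E')$ through $(v,\varepsilon)$ --- but the argument is not carried out. Two specific problems:
\begin{itemize}
\item Your dichotomy does not settle either case. In the first case you conclude that $e$ is twisted, but ``$e$ twisted'' is a condition involving \emph{both} vertices of $e$, whereas ``$E$ relatively non-twisted'' compares the vertex of $e$ inside $\sigma_\eta$ against the vertex of $L$ on $\tau_\zeta$. These are different sign conditions and one does not formally imply the other. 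In the second case you invoke ``a direct geometric analysis'' without performing it.
\item Even if each individual $E$ could be made relatively non-twisted for \emph{some} $\varepsilon$, you must produce a \emph{single} $\varepsilon$ that works simultaneously for all edges $e$ crossing any of the three rays $\tau_\zeta$. Your final sentence (``the remaining freedom in the choice of $\varepsilon$ then accommodates consistency'') asserts this compatibility without argument, and it is precisely the heart of the matter.
\end{itemize}

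The paper sidesteps this difficulty entirely by induction on $d$. For $d\le 3$ one has $\Hyp(C)=\Div(C)$ and the claim is checked directly. For $d\ge 4$, one peels off a multi-bridge $B$ so that $C$ decomposes as a degree $d-1$ honeycomb $C'$ glued to a smaller piece along $B$, with $v$ on the $C'$ side. Since the tropical hyperbolicity locus depends only on the dual subdivision and the real phase structure (not on edge lengths), one may adjust lengths and replace $v$ by a point $v'$ in the same component so that every segment-type intersection with a line through $v'$ lies in $C'$. The induction hypothesis then supplies the required $\varepsilon$ for $C'$, and the edges of $B$ (which are all twisted by hypothesis) are handled by Theorem~\ref{Th4.3.9}. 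This manoeuvre is exactly what avoids the simultaneous-$\varepsilon$ problem you left open.
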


\begin{proof}
If $(C,\E)$ is hyperbolic with respect to a symmetric copy of $v$, then for every bounded edge $e \subset C\cap \Int (\sigma_\eta)$ of direction $\eta$, the edge $e$ is twisted by \Cref{Th4.4.4}.
Since $T$ is a dividing set of twisted edges, we obtain by \Cref{PropHoneyBridge} that for every edge $e$ satisfying those conditions, the unique multi-bridge $B$ containing $e$ is contained in $T$.

Conversely, assume that for every bounded edge $e \subset C\cap \Int (\sigma_\eta)$ of direction $\eta$, the unique multi-bridge $B$ containing $e$ is contained in the dividing set of twisted edges $T$ on $C$.
We want to prove iteratively on the degree $d$ of $C$ that $(C,\E )$ is hyperbolic with respect to $(v,\varepsilon )$ for some $\varepsilon \in \mathcal{R}^2$.

By \Cref{RkDividing}, for $d= 1,2,3$, the sets $\Div (C)$ and $\Hyp (C)$ are equal (as sets and as $\Z_2$-vector spaces), hence the fact that $T$ is dividing already implies that $(C,\E )$ is hyperbolic. 
If $d=1$, then $(C,\E )$ is hyperbolic with respect to all real tropical points $(v' , \varepsilon ')$ not lying on $(C,\E)$.
If $d=2,3$, one can check by hand using \Cref{Th4.4.4} that \Cref{propHypHoney} holds.

Assume as an iterative hypothesis that \Cref{propHypHoney} is true up to degree $d-1$, with $d \geq 4$.
Let $C$ be of degree $d$.
Since $C$ is a non-singular honeycomb, there exists a multi-bridge $B$ on $C$ such that $C$ is given as the gluing along $B$ of a honeycomb $C'$ of degree $d-1$ with another honeycomb $C''$ (see for example \Cref{FigHyperboHoney2} with $C'$ of degree $d-1$ in black and $C''$ in purple, glued along the multi-bridge $B$ marked with blue dots).
This gluing induces a decomposition of the dual subdivision $\Delta_C$ into $\Delta_{C'} \cup \Delta_{C''}$, with intersection $\Delta_{C'} \cap \Delta_{C''}$ given as the edges dual to edges of $B$ (see for example in \Cref{FigHyperboHoney2Dual} the gluing of dual subdivisions $\Delta_{C'}$ of degree $d-1$, in black, with $\Delta_{C''}$, in purple, along the edges dual to $B$, in blue).  
Since $d\geq 4$, we can choose $B$ such that the point $v$ belongs to a connected component $D \subset \T \pr^2 \backslash C$ dual to an integer point $(p_1 , p_2) \in (\Delta_{C'} \backslash B^\vee) \cap \Z^2$. 

For $\varepsilon \in \mathcal{R}^2$, the non-singular real tropical curve $(C,\E)$ is hyperbolic with respect to the real tropical point $(v,\varepsilon)$ if and only if the symmetric copy $\varepsilon (D)$ is contained in the tropical hyperbolicity locus of $(C,\E)$. 
Then the set of integer points $(p_1 , p_2) \in \Delta_{C} \cap \Z^2$ such that their dual connected component $D'$ is contained in the tropical hyperbolicity locus of $(C,\E)$ depends only on the dual subdivision $\Delta_C$ of $C$ and the real phase structure on $C$, or equivalently on the distribution of signs on $\Delta_C$.
In particular, it does not depend on the length of the bounded edges of $C$.
Choose a point $v'$ in $D$ so that every bounded edge of $C$ intersected by the 1-dimensional faces of the polyhedral subdivision $\Sigma_{v'}$ of $\T \pr^2$ is dual to an edge of $(\Delta_{C'} \backslash B^\vee)$ (such a point $v'$ always exist up to replacing $C$ by another non-singular honeycomb of degree $d$ in $\T \pr^2$, also called $C$, with distinct bounded edges lengths).
In particular, the multi-bridge $B$ is contained in the face $\sigma_\eta$ of $\Sigma_{v'}$, for $\eta$ the direction of the edges of $B$.
By assumption, we obtain that the multi-bridge $B$ is contained in the set of twisted edges $T$.
  
By induction hypothesis, there exists $\varepsilon \in \mathcal{R}^2$ such that the non-singular real tropical curve $(C' , \E |_{C'})$ is hyperbolic with respect to $(v' ,\varepsilon )$.
Since every edge $e$ of the multi-bridge $B$ on $C$ is twisted, for any tropical line $L$ through $v$ containing $e$, by \Cref{Th4.3.9} the connected component $e$ of $C\cap L$ lifts to two distinct real points of multiplicity 1. 
All intersection components $E$ which are segments strictly contained in a bounded edge of $C$ are in fact strictly contained in a bounded edge of $C'$.
Since $(C' , \E |_{C'})$ is hyperbolic with respect to $(v' ,\varepsilon )$ for some $\varepsilon \in \mathcal{R}^2$, all these intersection components $E$ give rise to real points of multiplicity 1 by \Cref{ItemHyp3} of \ref{Th4.4.4} and \Cref{Th4.3.10}.
Therefore, the real tropical curve $(C,\E)$ satisfies \Cref{ItemHyp3} of \Cref{Th4.4.4} with respect to the real tropical point $(v' ,\varepsilon )$, hence  $(C ,\E )$ is hyperbolic with respect to $(v' ,\varepsilon )$, and thus $(C ,\E )$ is hyperbolic with respect to $(v,\varepsilon )$.
\end{proof}

Let $\alpha = (\alpha_1 , \alpha_2) \in \Delta_C \cap \Z^2$.
A multi-bridge $B$ is said to be \emph{on the left} of $\alpha^\vee $ if $B^\vee$ lies in a vertical line through a point $(\beta_1 , \alpha_2) \in \Delta_C \cap \Z^2$ with $\beta_1 < \alpha_1$. 
Similarly, the multi-bridge $B$ is said to be \emph{below} $\alpha^\vee $ if $B^\vee$ lies in a horizontal line through a point $(\alpha_1 , \beta_2) \in \Delta_C \cap \Z^2$ with $\beta_2 < \alpha_2$, and $B$ is said to be \emph{diagonally above} $\alpha^\vee $ if $B^\vee$ lies in a line of direction $(1,-1)$ going through the point $(\beta_1 , \beta_2) \in \Delta_C \cap \Z^2$ with $\alpha_1 + \alpha_2 < \beta_1 + \beta_2$.
We deduce from the proof of \Cref{propHypHoney} the following corollary, stated in terms of the dual subdivision.

\begin{corollary}[\Cref{IntroCorHypHoneyBridge}]
\label{CorHypHoneyBridge}
Let $(C,\E)$ be a non-singular real honeycomb with induced dividing set of twisted edges $T$.
Let $\alpha = (\alpha_1 , \alpha_2 ) \in \Delta_C \cap \Z^2$ be an integer point in the dual subdivision of $C$.
The dual connected component $\alpha^\vee \subset (\T\pr^2 \backslash C)$ is contained in the tropical hyperbolicity locus $H$ of $(C,\E)$ if and only if every multi-bridge on the left, below and diagonally above $\alpha^\vee$ is twisted.
\end{corollary}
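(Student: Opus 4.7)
The plan is to deduce the corollary from \Cref{propHypHoney} by translating its sector-based hyperbolicity criterion into the dual picture on $\Delta_C$. I would fix a generic point $v \in \alpha^\vee$ and use the theorem to reduce the problem to identifying which multi-bridges $B$ of direction $\eta$ have at least one bounded edge contained in $\Int(\sigma_\eta)$; by \Cref{PropHoneyBridge}, this family is well-defined since every bounded edge of the honeycomb lies in a unique multi-bridge.

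The first key step is to locate each sector $\sigma_\eta$ of $\Sigma_v$ inside $\T\pr^2$. The three rays $\tau_{(1,0)}$, $\tau_{(0,1)}$, $\tau_{(1,1)}$ emanating from $v$ have outward directions $(1,0)$, $(0,1)$, $(-1,-1)$ and terminate at the three distinct vertices of the simplex $\T\pr^2$, namely the pairwise intersections of the sides at infinity $x_i = -\infty$. A direct check shows that the two rays bounding $\sigma_\eta$ end at the two vertices lying on a common side of the simplex, so each sector abuts exactly one side at infinity: $\sigma_{(1,0)}$ abuts $x_1 = -\infty$, $\sigma_{(0,1)}$ abuts $x_2 = -\infty$, and $\sigma_{(1,1)}$ abuts $x_0 = -\infty$.

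Next I would interpret multi-bridges using duality. A multi-bridge $B$ of direction $\eta$ has its dual edges on a common affine line $\ell_B$ in $\Delta_C$, vertical, horizontal, or of slope $-1$ according to whether $\eta$ is $(1,0)$, $(0,1)$, or $(1,1)$; this line partitions $\Delta_C \cap \Z^2$ into two subsets corresponding to the two connected components of $\T\pr^2 \setminus B$. In the max convention of the paper, integer points with small first coordinate, small second coordinate, or large coordinate sum are dual to regions adjacent to the sides $x_1 = -\infty$, $x_2 = -\infty$, or $x_0 = -\infty$ respectively. Hence ``$B$ on the left of $\alpha^\vee$'' means exactly that $B$ has direction $(1,0)$ and separates $\alpha^\vee$ from $x_1 = -\infty$, and analogously for the notions ``below'' and ``diagonally above''.

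The proof is then completed by the key geometric claim: for generic $v \in \alpha^\vee$, a multi-bridge $B$ of direction $\eta$ has a bounded edge contained in $\Int(\sigma_\eta)$ if and only if $B$ separates $\alpha^\vee$ from the side at infinity abutting $\sigma_\eta$. One direction is immediate, since an edge of $B$ lying in $\sigma_\eta$ sits strictly between $v$ and the abutting boundary side, forcing the separation. The main obstacle is the converse: if $B$ separates $\alpha^\vee$ from that boundary side, any path inside $\sigma_\eta$ from $v$ to the boundary must cross $B$, producing an edge $e$ of $B$ meeting $\Int(\sigma_\eta)$, but one must upgrade this to containment in $\Int(\sigma_\eta)$ rather than mere intersection. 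This step follows from a suitable genericity choice of $v$ inside $\alpha^\vee$, using that the three rays $\tau$ meet $C$ only in finitely many transverse interior crossings, so at least one bounded edge of $B$ lies entirely in $\Int(\sigma_\eta)$. Combining the equivalences established above with \Cref{propHypHoney} then yields the statement of the corollary.
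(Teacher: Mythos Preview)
Your approach is correct and matches the paper's: the paper gives no explicit proof of this corollary, merely stating that it is ``deduced from the proof of \Cref{propHypHoney}'' as a restatement in the dual picture, and your proposal carries out precisely this duality translation. One small remark: for the step you flag as the ``main obstacle'' (upgrading intersection with $\Int(\sigma_\eta)$ to containment), the paper's own proof of \Cref{propHypHoney} handles the analogous issue by observing that membership in the tropical hyperbolicity locus depends only on the dual subdivision and the real phase structure, not on the edge lengths of $C$; invoking this explicitly would let you adjust lengths so that the required edge of $B$ lies entirely inside the sector, which is a cleaner justification than pure genericity of $v$.
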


\begin{example}
\label{ExLast}
Let $(C,\E)$ be a non-singular real honeycomb of degree 4 in $\T \pr^2$, with induced set of twisted edges given as exactly one multi-bridge, as described in \Cref{FigHyperboHoney2}.
Let $\alpha = (1,1) \in \Delta_C \cap \Z^2$ an integer point in the dual subdivision.
This point belongs to the black zone in \Cref{FigHyperboHoney2Dual}.
The set of twisted edges $T$ on $C$ satisfies \Cref{CorHypHoneyBridge} for the point $\alpha$, and uniquely for the point $\alpha$, hence the tropical hyperbolicity locus $H$ of $(C,\E )$ consists of the connected component $\alpha^\vee \subset (\T \pr^2 \backslash C)$.
Since $\alpha^\vee$ is bounded by a non-twisted primitive cycle of $C$, we can conclude that the signed tropical hyperbolicity locus $\R H$ of $(C,\E )$ consists of exactly one symmetric copy of $\alpha^\vee$. 
\end{example}

\begin{remark}
Let $\alpha$ be an integer point of the dual subdivision of a non-singular honeycomb $C$ of degree $d$ in $\T \pr^2$.
We can define a set $\Hyp_\alpha (C)$ of configurations of twists $T$ on $C$ so that the tropical hyperbolicity locus of any $(C,\E)$ inducing $T$ contains the connected component $\alpha^\vee \subset \T \pr^2 \backslash C$.
Using \Cref{CorHypHoneyBridge} and \Cref{RkDividing}, we can see that $\Hyp_\alpha (C)$ is a $\Z_2$-affine subspace of $\Div (C)$, with origin given by the configuration of twists $T$ with edge support the union of multi-bridges on the left, below and diagonally above $\alpha$.
The codimension of $\Hyp_\alpha (C)$ in $\Div (C)$ is given by the number of multi-bridges contained in $T$.
\end{remark}

 \bibliographystyle{alpha}
 \bibliography{bibliomoi1}

\newcommand{\etalchar}[1]{$^{#1}$}
\begin{thebibliography}{COSW04}

\bibitem[AGS20]{allamigeon2020tropical}
Xavier Allamigeon, St{\'e}phane Gaubert, and Mateusz Skomra.
\newblock Tropical spectrahedra.
\newblock {\em Discrete \& Computational Geometry}, 63(3):507--548, 2020.

\bibitem[BCR13]{bochnak2013real}
Jacek Bochnak, Michel Coste, and Marie-Francoise Roy.
\newblock {\em Real algebraic geometry}, volume~36.
\newblock Springer Science \& Business Media, 2013.

\bibitem[BDM12]{brugalle2012inflection}
Erwan Brugall{\'e} and Lucia~L{\'o}pez De~Medrano.
\newblock Inflection points of real and tropical plane curves.
\newblock {\em Journal of Singularities}, 4:74--103, 2012.

\bibitem[BIMS15]{brugalle2015brief}
Erwan Brugall{\'e}, Ilia Itenberg, Grigory Mikhalkin, and Kristin Shaw.
\newblock Brief introduction to tropical geometry.
\newblock In {\em Proceedings of the G{\"o}kova Geometry-Topology conference},
  pages 1--75, 2015.

\bibitem[BLM{\etalchar{+}}16]{baker2016bitangents}
Matthew Baker, Yoav Len, Ralph Morrison, Nathan Pflueger, and Qingchun Ren.
\newblock Bitangents of tropical plane quartic curves.
\newblock {\em Mathematische Zeitschrift}, 282(3-4):1017--1031, 2016.

\bibitem[Br{\"a}10]{branden2010discrete}
Petter Br{\"a}nd{\'e}n.
\newblock Discrete concavity and the half-plane property.
\newblock {\em SIAM Journal on Discrete Mathematics}, 24(3):921--933, 2010.

\bibitem[Br{\"a}11]{branden2011obstructions}
Petter Br{\"a}nd{\'e}n.
\newblock Obstructions to determinantal representability.
\newblock {\em Advances in Mathematics}, 226(2):1202--1212, 2011.

\bibitem[CM20]{cueto2020combinatorics}
Maria~Angelica Cueto and Hannah Markwig.
\newblock Combinatorics and real lifts of bitangents to tropical quartic
  curves.
\newblock {\em arXiv preprint arXiv:2004.10891}, 2020.

\bibitem[COSW04]{choe2004homogeneous}
Young-Bin Choe, James~G Oxley, Alan~D Sokal, and David~G Wagner.
\newblock Homogeneous multivariate polynomials with the half-plane property.
\newblock {\em Advances in Applied Mathematics}, 32(1-2):88--187, 2004.

\bibitem[GKZ14]{gelfand2014discriminants}
IM~Gelfand, MM~Kapranov, and AV~Zelevinsky.
\newblock Discriminants, resultants and multidimensional determinants.
\newblock 2014.

\bibitem[G{\"u}l97]{guler1997hyperbolic}
Osman G{\"u}ler.
\newblock Hyperbolic polynomials and interior point methods for convex
  programming.
\newblock {\em Mathematics of Operations Research}, 22(2):350--377, 1997.

\bibitem[Haa97]{haas1997real}
Bertrand Haas.
\newblock {\em Real algebraic curves and combinatorial constructions}.
\newblock PhD thesis, Verlag nicht ermittelbar, 1997.

\bibitem[HV07]{helton2007linear}
J~William Helton and Victor Vinnikov.
\newblock Linear matrix inequality representation of sets.
\newblock {\em Communications on Pure and Applied Mathematics: A Journal Issued
  by the Courant Institute of Mathematical Sciences}, 60(5):654--674, 2007.

\bibitem[IMS09]{itenberg2009tropical}
Ilia Itenberg, Grigory Mikhalkin, and Eugenii~I Shustin.
\newblock {\em Tropical algebraic geometry}, volume~35.
\newblock Springer Science \& Business Media, 2009.

\bibitem[Kap00]{kapranov2000amoebas}
Mikhail~M Kapranov.
\newblock Amoebas over non-archimedean fields.
\newblock {\em preprint}, 2000.

\bibitem[LM20]{len2020lifting}
Yoav Len and Hannah Markwig.
\newblock Lifting tropical bitangents.
\newblock {\em Journal of Symbolic Computation}, 96:122--152, 2020.

\bibitem[LPR05]{lewis2005lax}
Adrian Lewis, Pablo Parrilo, and Motakuri Ramana.
\newblock The {L}ax conjecture is true.
\newblock {\em Proceedings of the American Mathematical Society},
  133(9):2495--2499, 2005.

\bibitem[LSon]{loho2021signed}
Georg Loho and Mateusz Skomra.
\newblock Signed tropical halfspaces and convexity.
\newblock In preparation.

\bibitem[LV19]{loho2019signed}
Georg Loho and L{\'a}szl{\'o}~A V{\'e}gh.
\newblock Signed tropical convexity.
\newblock {\em arXiv preprint arXiv:1906.06686}, 2019.

\bibitem[MR09]{mikhalkin2009tropical}
Grigory Mikhalkin and Johannes Rau.
\newblock {\em Tropical geometry}, volume~8.
\newblock MPI for Mathematics, 2009.

\bibitem[MR18]{mikhalkin2018tropical}
Grigory~Borisovich Mikhalkin and Arthur Renaudineau.
\newblock Tropical limit of log-inflection points for planar curves.
\newblock {\em Sbornik: Mathematics}, 209(9):1273, 2018.

\bibitem[MS09]{maclagan2009introduction}
Diane Maclagan and Bernd Sturmfels.
\newblock Introduction to tropical geometry.
\newblock {\em Graduate Studies in Mathematics}, 161:75--91, 2009.

\bibitem[Ore07]{orevkov2007arrangements}
Stepan~Yur'evich Orevkov.
\newblock Arrangements of an m-quintic with respect to a conic which maximally
  intersects its odd branch.
\newblock {\em Algebra i analiz}, 19(4):174--242, 2007.

\bibitem[Pay09]{payne2009analytification}
Sam Payne.
\newblock Analytification is the limit of all tropicalizations.
\newblock {\em Mathematical research letters}, 16(2):543--556, 2009.

\bibitem[RBB17]{renaudineau2017haas}
Arthur Renaudineau, Erwan Brugall{\'e}, and Beno{\^\i}t Bertrand.
\newblock Haas' theorem revisited.
\newblock {\em {\'E}pijournal de G{\'e}om{\'e}trie Alg{\'e}brique}, 1, 2017.

\bibitem[Ren04]{renegar2004hyperbolic}
James Renegar.
\newblock Hyperbolic programs, and their derivative relaxations.
\newblock Technical report, Cornell University Operations Research and
  Industrial Engineering, 2004.

\bibitem[Ren17]{renaudineau2017tropical}
Arthur Renaudineau.
\newblock A tropical construction of a family of real reducible curves.
\newblock {\em Journal of Symbolic Computation}, 80:251--272, 2017.

\bibitem[RGST05]{richter2005first}
Jurgen Richter-Gebert, Bernd Sturmfels, and Thorsten Theobald.
\newblock First steps in tropical geometry.
\newblock {\em Contemporary Mathematics}, 377:289--318, 2005.

\bibitem[Rok78]{rokhlin1978complex}
Vladimir~A Rokhlin.
\newblock Complex topological characteristics of real algebraic curves.
\newblock {\em Russian Mathematical Surveys}, 33(5):85, 1978.

\bibitem[RS21]{renaudineau2018bounding}
Arthur Renaudineau and Kris Shaw.
\newblock Bounding the {B}etti numbers of real hypersurfaces near the tropical
  limit.
\newblock {\em Annales Scientifiques de l'{\'E}cole {N}ormale {S}up{\'e}rieure,
  to appear}, arXiv preprint arXiv:1805.02030, 2021.

\bibitem[RVY21]{rincon2021positively}
Felipe Rinc{\'o}n, Cynthia Vinzant, and Josephine Yu.
\newblock Positively hyperbolic varieties, tropicalization, and positroids.
\newblock {\em Advances in Mathematics}, 383:107677, 2021.

\bibitem[Spe05]{speyer2005horn}
David~E Speyer.
\newblock Horn's problem, {V}innikov curves, and the hive cone.
\newblock {\em Duke Mathematical Journal}, 127(3):395--427, 2005.

\bibitem[Vir80]{viro1980curves}
Oleg~Yanovich Viro.
\newblock Curves of degree 7, curves of degree 8, and the {R}agsdale
  conjecture.
\newblock In {\em Doklady Akademii Nauk}, volume 254, pages 1306--1310. Russian
  Academy of Sciences, 1980.

\bibitem[Vir01]{viro2001dequantization}
Oleg Viro.
\newblock Dequantization of real algebraic geometry on logarithmic paper.
\newblock In {\em European Congress of Mathematics}, pages 135--146. Springer,
  2001.

\bibitem[WW09]{wagner2009criterion}
David~G Wagner and Yehua Wei.
\newblock A criterion for the half-plane property.
\newblock {\em Discrete Mathematics}, 309(6):1385--1390, 2009.

\end{thebibliography}

\end{document}